\numberwithin{equation}{section}
\theoremstyle{plain}
\newtheorem{theorem}{Theorem}[section]
\newtheorem{corollary}[theorem]{Corollary}
\newtheorem{lemma}[theorem]{Lemma}
\newtheorem{proposition}[theorem]{Proposition}
\theoremstyle{definition}
\newtheorem{definition}[theorem]{Definition}
\newtheorem{example}[theorem]{Example}
\theoremstyle{remark}
\newtheorem{remark}[theorem]{Remark}
\DeclareMathOperator{\Hom}{Hom}
\DeclareMathOperator{\End}{End}
\DeclareMathOperator{\id}{\rm id }
\DeclareMathOperator{\sh}{\rm Sh }
\DeclareMathOperator{\lex}{\rm lex }
\DeclareMathOperator{\gr}{\rm gr }
\newcommand{\hh}{\textbf{\rm h}}
\newcommand{\N}{\mathcal{N}}
\renewcommand{\L}{\mathbb{L}}
\begin{document}
	
	\title{The Adams operators on connected graded Hopf algebras}
	
     \author{Y.-Y. Li, \; G.-S. Zhou }

  \address{\rm Li \newline \indent
  	School of Mathematics and Statistics, Ningbo University, Ningbo 315211, China
  	\newline \indent E-mail: 1951897284@qq.com
  	\newline\newline
  	\indent Zhou \newline \indent
  	School of Mathematics and Statistics, Ningbo University, Ningbo 315211, China
  	\newline \indent E-mail: zhouguisong@nbu.edu.cn}
	
\begin{abstract}
The Adams operators on a Hopf algebra $H$ are the convolution powers of the identity map of $H$. They are also called Hopf powers or Sweedler powers. It is a natural family of operators on $H$ that contains the antipode. We study the linear properties of the Adams operators when $H=\bigoplus_{m\in \mathbb{N}} H_m$ is connected graded. The main result is that for any of such $H$,  there exist a PBW type homogeneous basis and a natural total order on it such that the restrictions $\Psi_n|_{H_m}$ of the Adams operators are simultaneously upper triangularizable with respect to this ordered basis. Moreover, the diagonal coefficients are determined in terms of $n$ and a combinatorial number assigned to the basis elements. As an immediate consequence, we obtain a complete description of the characteristic polynomial of  $\Psi_n|_{H_m}$, both on eigenvalues and their multiplicities, when $H$ is locally finite and the base field is of characteristic zero. It recovers the  main result of the paper \cite{AgLa} by Aguiar and Lauve, where the approach is different from ours.
\end{abstract}
	
\subjclass[2010]{16T05, 16T30, 16W50, 68R15}


	\keywords{connected graded Hopf algebra, Adams operator, Hopf power, Sweedler power, PBW basis, Lyndon word}
	
	\maketitle

	
	
	
	
\section*{Introduction}

\newtheorem{maintheorem}{\bf{Theorem}}
\renewcommand{\themaintheorem}{\Alph{maintheorem}}
	
A \emph{connected ($\mathbb{N}$-)graded Hopf algebra} is a Hopf algebra $H$ equipped with a grading $H=\bigoplus_{m\in \mathbb{N}} H_m$ such that all of its structure maps preserve the grading and the zeroth component $H_0$ is one-dimensional. The classical examples of such Hopf algebras are the universal enveloping algebras of positively graded Lie algebras. In recent years, many nontrivial new examples appear in the classification results on connected Hopf algebras of finite Gelfand-Kirillov dimension \cite{BGZ,BZ2, NWW, WZZ, Wang, Zh}. The most fruitful source of interesting examples is of combinatoric nature. A fundamental principle observed by  Joni and Rota \cite{JR} is that discrete structures may give rise to natural Hopf algebras whose (co)multiplications encode the (dis)assemble of those structures. Hopf algebras arised in this perspective always turn out to be connected graded (with each homogeneous component being finite dimensional). Celebrated examples include the Hopf algebra of symmetric functions \cite{Mac}, of quasi-symmetric functions \cite{Ges}, of noncommutative symmetric functions \cite{GKLL}, of (un)labeled graphs \cite{Schm}, of permutations of Malvenuto and Reutenauer \cite{MaRe}, of rooted trees of Connes and Kreimer \cite{CoKr}, of planar binary trees of Loday and Ronco \cite{LoRo}, and so on. We refer to  \cite{GrRe} for more examples and references. 	
	
The convolution powers $\Psi_n$ of the identity map of a Hopf algebra $H$ is a natural family of operators on $H$ that contains the antipode. In the case that $H=k[G]$ is a group algebra, $\Psi_n$ is the linear extension of the $n$-th power map on $G$. These maps are called \emph{Adams operators} in this paper, follow the  terminology used in \cite{AgLa,Cart,Lod}, which is reminiscent of the one in topological K-theory introduced  by Adams \cite{Adam}. Other common terminology for them are Hopf powers, Sweedler powers and characteristic endomorphisms. Explicitly Adams operators  seem to appear first in \cite{TaOo}. They have been actively used in the study of Hochschild and cyclic (co)homology of commutative algebras \cite{GeSc,Lod}, and in the program to carry over to Hopf algebras some of rich theory of groups \cite{KSZ,LMS}. 
	
In this paper, we study the behavior of the Adams operators when $H$ is connected graded. General structure results may be available because in some sense being connected graded is a quite restrictive condition on Hopf algebras. Aguiar and Lauve have done an excellent work in this line of research \cite{AgLa}. They give a complete description of the characteristic polynomial -- both eigenvalues and their multiplicities -- for the restrictions $\Psi_n|_{H_m}$ of Adams operators on homogeneous components of $H$, provided that $H$ is locally finite and the base field is of characteristic zero. As consequences, they obtain many combinatorial descriptions of the trace of $\Psi_n|_{H_m}$, particularly for those $H$ of combinatoric nature. Another notable work in this line of research is a paper by  Diaconis, Pang and  Ram \cite{DPRa}, where the Adams operators on connected graded Hopf algebras are employed to induce finite Markov chains (e.g. card shuffling and rock-breaking) and their stationary distributions.

It is well-known to experts that for any connected graded Hopf algebra $H$ that is commutative or cocommutative, the family  $(\Psi_{n}|_{H_m})_{n\in \mathbb{Z}}$ of restrictions of the Adams operators 
are simultaneously diagonalizable when the base field is of characteristic zero. However, even $\Psi_2|_{H_3}$ itself may not be diagonalizable in general, see Example \ref{counterexample} for a counterexample. So it is natural to ask that what kind of weaker linear properties can these maps share.  This paper is devoted to present an answer to this question. The main result of this paper may be phrased as follows, which is considered as a plain version of the combination of Theorem \ref{Adams operators} and Theorem \ref{PBW-basis}.

\begin{maintheorem}
For any connected graded Hopf algebra $H$, there exist a PBW type homogeneous basis and a natural total order on it such that the restrictions $\Psi_n|_{H_m}$ of the Adams operators are simultaneously upper triangularizable with respect to this ordered basis. Moreover, the diagonal coefficients are determined in terms of $n$ and a combinatorial number assigned to the basis elements.
\end{maintheorem}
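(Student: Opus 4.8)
The plan is to split the statement into three stages: a formal manipulation that removes the dependence on $n$, a structural input that produces the basis, and a combinatorial computation that fixes the triangular shape. Writing $\id = u\varepsilon + J$ with $J := \id - u\varepsilon$, the binomial expansion $\Psi_n = \id^{*n} = \sum_{j\ge 0}\binom{n}{j}\,J^{*j}$ is legitimate because $u\varepsilon$ is the convolution unit and is central. On $H_+=\bigoplus_{m\ge 1}H_m$ one has $J=\id$ and $J^{*j}=\mu^{(j-1)}\circ\bar\Delta^{(j-1)}$, where $\mu^{(j-1)}$ and $\bar\Delta^{(j-1)}$ denote iterated multiplication and iterated \emph{reduced} comultiplication; since each reduced tensor factor has degree $\ge 1$, connectedness forces $J^{*j}|_{H_m}=0$ for $j>m$. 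Thus $\Psi_n|_{H_m}=\sum_{j=1}^{m}\binom{n}{j}\,T_j$ with $T_j:=\mu^{(j-1)}\bar\Delta^{(j-1)}|_{H_m}$ a finite family of operators \emph{independent of $n$}. A Vandermonde argument in $n$ recovers each $T_j$ as an integer combination of the $\Psi_n$, so a subspace of $H_m$ is stable under every $\Psi_n$ if and only if it is stable under every $T_j$. Hence to triangularize the whole family $(\Psi_n)_n$ simultaneously it suffices to produce a single complete flag of $H_m$ invariant under all the $T_j$.

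For the basis I would take the PBW type homogeneous basis of Theorem \ref{PBW-basis}: fix a totally ordered set of homogeneous generators, pass to words in the free monoid, and use the Lyndon factorization, so that every basis element is a nonincreasing product $b=c_{w_1}c_{w_2}\cdots c_{w_\ell}$ of bracketed Lyndon generators with $w_1\ge\cdots\ge w_\ell$. The total order compares the associated words $w_1w_2\cdots w_\ell$ graded-lexicographically, refined by length. The combinatorial number attached to $b$ is its length $\ell$, the number of Lyndon factors counted with multiplicity; it is this quantity, and not the degree, that controls the diagonal, since two basis elements of the same $H_m$ may have different lengths.

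The core is the claim that for every PBW monomial $b$ of length $\ell$,
\[
\Psi_n(b)=n^{\ell}\,b+\sum_{b'<b}c_{b'}(n)\,b',
\]
the sum running over strictly smaller basis elements. I would prove this by analyzing $T_j(b)=\mu^{(j-1)}\bar\Delta^{(j-1)}(b)$. Using that each Lyndon generator is primitive modulo the $\Span$ of tensors of PBW monomials of strictly smaller degree, $\bar\Delta^{(j-1)}(b)$ distributes the $\ell$ factors of $b$ among the $j$ tensor slots, up to lower order; re-multiplying by $\mu^{(j-1)}$ and straightening each slot's product to its leading nonincreasing form returns $b$ from every such distribution, again modulo strictly smaller monomials. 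Counting distributions shows that the coefficient of $b$ in $T_j(b)$ equals the number of surjections $[\ell]\twoheadrightarrow[j]$, namely $j!\,S(\ell,j)$ with $S(\ell,j)$ a Stirling number of the second kind. Summing over $j$ and invoking the classical identity
\[
\sum_{j}\binom{n}{j}\,j!\,S(\ell,j)=\sum_{j}n^{\underline{j}}\,S(\ell,j)=n^{\ell}
\]
yields the diagonal coefficient $n^{\ell}$, while every remaining term is strictly smaller in the order. Ordering the basis of $H_m$ this way makes all $T_j$, hence all $\Psi_n|_{H_m}$, simultaneously upper triangular with the asserted diagonal, which is exactly the combination of Theorem \ref{Adams operators} and Theorem \ref{PBW-basis}.

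The main obstacle is the order-compatibility built into the previous paragraph: establishing that the reduced coproduct of a Lyndon generator involves, in each tensor slot, only PBW monomials of strictly smaller degree, and dually that multiplying and straightening PBW monomials never produces a strictly larger leading term. This is precisely the delicate, characteristic-free interaction among the algebra structure, the coalgebra structure, and the Lyndon combinatorics; it is the substance I would isolate in Theorem \ref{PBW-basis}, after which the Stirling bookkeeping above is routine.
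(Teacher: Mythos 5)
Your proposal is essentially correct in outline, but its computational core takes a genuinely different route from the paper. The paper proves the triangularity statement (Theorem \ref{Adams operators}) entirely inside the convolution algebra: it first shows that on a PBW monomial $z_V$ the coproduct is $\sum_{W|V}\binom{V}{W}\,z_W\otimes z_{V/W}$ plus $\vartriangleleft_R$-lower terms (Lemma \ref{comultiplicatoin-expression}), deduces that the convolution of two triangular operators is triangular with diagonal $b(V)=\sum_{W|V}\binom{V}{W}b_1(W)b_2(V/W)$ (Proposition \ref{convolution-expression}), and then obtains the diagonal $n^{l(V)}$ by induction on $n$ from the identity $\sum_{W|V}\binom{V}{W}a^{l(W)}\bigl((n-1)a\bigr)^{l(V/W)}=(na)^{l(V)}$ (Lemma \ref{general formula}); negative $n$ is treated separately via the antipode series $S_H=\sum_{r\geq 0}(\eta_H\circ\varepsilon_H-\id_H)^{*r}$, which yields $S_H(z_V)\in(-1)^{l(V)}z_V+\text{lower}$. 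You instead expand $\Psi_n=\sum_{j}\binom{n}{j}J^{*j}$ once and for all, reduce simultaneous triangularization to the $n$-independent operators $T_j=\mu^{(j-1)}\bar\Delta^{(j-1)}$ by a Pascal-matrix (unitriangular, hence integral) inversion, and compute the diagonal by counting surjections, $\sum_j\binom{n}{j}\,j!\,S(\ell,j)=n^{\ell}$. These are two organizations of the same combinatorial identity (classifying maps $[\ell]\to[n]$ by image size), and both rest on exactly the same structural input --- the compatibility of the PBW basis and its order with multiplication and comultiplication --- which you correctly isolate as the real substance and defer, just as the paper defers it to Theorem \ref{PBW-basis} together with Lemmas \ref{rearragement} and \ref{comultiplicatoin-expression} (ultimately \cite{Zhou} and Kharchenko's method \cite{Kh}). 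Your route buys a cleaner separation of the $n$-dependence and makes the combinatorial number $\ell$ appear transparently; the paper's route stays self-contained in the convolution algebra and handles negative $n$ explicitly.

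Two gaps should be repaired. First, the Adams operators here are indexed by all of $\mathbb{Z}$ (with $\Psi_{-1}=S_H$), and your binomial expansion is justified as stated only for $n\geq 0$; for $n<0$ you need the standard remark that $J$ is locally convolution-nilpotent on each $H_m$ (by connectedness), so $\id_H=\eta_H\circ\varepsilon_H+J$ is convolution-invertible and $\Psi_n=\sum_j\binom{n}{j}J^{*j}$ holds for all $n\in\mathbb{Z}$ with generalized binomial coefficients; since the Stirling identity is polynomial in $n$, the diagonal $n^{\ell}$ then follows for negative $n$ as well. Second, your description of the basis (all nonincreasing products of bracketed Lyndon generators) is correct only in characteristic zero: in characteristic $p$ the basis of Theorem \ref{PBW-basis} is truncated by the height function $h$ (each generator occurring fewer than $h(\xi)$ times, with $h(\xi)$ a power of $p$), and the straightening rules include rewriting $z_\xi^{h(\xi)}$ into lower terms. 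Your surjection count survives this --- distributing the factors of a valid truncated monomial into slots only produces valid monomials, and the height-rewriting affects only the lower-order corrections --- but the argument must be run with the truncated basis, not the unrestricted one, or else it silently proves the theorem only in characteristic zero.
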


Roughly speaking, by a PBW type basis of an algebra  we mean a basis consists of specific sorted products of some ordered family of elements of the algebra. In this paper, we assume for each element of the given family an upper bound on the number of the occurrences of itself in sorted products. As an immediate consequence of the above theorem (more precisely, Theorem \ref{Adams operators} and Theorem \ref{PBW-basis}), we obtain  in Corollary \ref{characteristic polynomial} a complete description of the characteristic polynomial of $\Psi_n|_{H_m}$, which recovers the main result of the paper \cite{AgLa}. The PBW type bases that are workable for our purpose shall satisfy several desirable conditions as listed in Theorem \ref{Adams operators}. The existence of such a basis for every connected graded Hopf algebra is claimed in Theorem \ref{PBW-basis}. Indeed, it has been established essentially in the paper \cite{ZSL3} by Lu, Shen and the second-named author by means of a combinatorial method based on Lyndon words, which is originally developed by Kharchenko \cite{Kh}. 

The paper is organized as follows. In Section 1, we recall the basic notions and facts related to the Adams operators and connected graded Hopf algebras. In Section 2, we study the behavior of the Adams operators on connected graded Hopf algebras. It is the core part of this paper, and there the main results are presented (without proof) as Theorem \ref{Adams operators} and Theorem \ref{PBW-basis}. Sections 3 and 4 are devoted to prove Theorem \ref{Adams operators}. Some technical formulas, which have their own right of interests, are established in Section 3. Section $5$ is devoted to prove Theorem \ref{PBW-basis}.
In the final section, we discuss the Hopf algebra of permutations by examining the basic ideas of this paper.

Throughout this paper, we work over a fixed field (of arbitrary characteristic) denoted by $k$. All vector spaces, algebras, coalgebras and unadorned tensor products are taken over $k$. The set of nonzero elements of $k$ is denoted by $k^\times$. We write  $\mathbb{N}$ and $\mathbb{Z}$ for the set of natural numbers and integers respectively, and let $\mathbb{N}_\infty:=\mathbb{N}\cup \{\infty\}$. By convention, $\mathbb{N}$ contains the number $0$.

\section{Preliminaries}
\label{Basic-definitions}

In this section, we recall the basic notions as well as some of their elementary properties that will be used in the sequel. Most of them are standard, for which we refer to \cite{HS,Mont}.  For the material on the Adams operators, we refer to \cite{AgLa,Pat} and \cite[Section 4.5]{Lod}.

\subsection{Definition of the Adams operators}

Let $H$ be a  bialgebra. We denote by $\mu_H$, $\Delta_H$, $\eta_H$ and $\varepsilon_H$ the  multiplication map,  comultiplication map,  unit map and counit map of $H$ respectively.

Let $\End(H)$ be the space of all linear endomorphisms  of  $H$. The \emph{convolution product} of $f,~g \in \End(H)$ is defined to be the linear endomorphism
\begin{eqnarray*}\label{convolution-prouct}
f*g := \mu_H\circ (f\otimes g) \circ \Delta_H.
\end{eqnarray*}
This turns $\End(H)$ into an associative algebra with unit $\eta_H\circ \varepsilon_H.$ The convolution  power $$\Psi_n:=(\id_H)^{*n}$$ of the identity map $\id_H\in \End(H)$ is  defined for any integer $n\geq 0$.  They are known as the \emph{Adams operators} of $H$. Other common terminology for these operators are Hopf powers, Sweedler powers and characteristic endomorphisms. 
By convention, $\Psi_0=\eta_H\circ \varepsilon_H.$ 

The bialgebra $H$ is by definition a \emph{Hopf algebra}  if  $\id_H$ is convolution-invertible. In this case, 
the \emph{antipode}  of $H$, denoted by $S_H$, is defined to be the convolution inverse of $\id_H$:
\[
S_H *\id_H=\id_H*S_H= \eta_H\circ \varepsilon_H.
\]
Moreover, $(\id_H)^{*n}:=(S_H)^{*-n}$ are defined for any negative integer $n<0$. They are also called the Adams operators of $H$, and are denoted by $\Psi_n$. Note that $\Psi_{-1}=S_H$.

\subsection{Connected graded Hopf algebras}
\label{connected graded Hopf algebras}
Let $\Gamma$ be a nontrivial free abelian monoid with neutral element denoted by $0$. So for each $\gamma\in \Gamma$, there are only finitely many pairs $(\alpha,\beta) \in \Gamma \times \Gamma$ such that $\gamma= \alpha+\beta$. Let $N(\gamma)$ be the largest number  such that $\gamma$ can be expressed as $\gamma= \alpha_1+\cdots +\alpha_{N(\gamma)}$ with $\alpha_i\in \Gamma \backslash\{0\}$. It is easy to see that this number is well-defined. By convention, we write $N(0):=0$.  For a $\Gamma$-graded vector space $A=\bigoplus_{\gamma\in \Gamma}A_\gamma$, we write $A_+:= \bigoplus_{\gamma\neq 0}A_\gamma$.

A \emph{$\Gamma$-graded bialgebra} is a bialgebra $H$ together with a $\Gamma$-grading $H=\bigoplus_{\gamma\in \Gamma}H_\gamma$  such that 
\begin{eqnarray*}
	& H_\alpha H_\beta \subseteq H_{\alpha+ \beta} \quad \text{for all } \alpha, \beta\in \Gamma, & \\
	&\Delta_H(H_\gamma) \subseteq \sum_{\alpha+\beta=\gamma} H_\alpha\otimes H_\beta \quad \text{for all } \gamma\in \Gamma,& \\
	& 1_H\in H_0 \quad \text{and} \quad \sum_{\gamma\neq 0} H_\gamma \subseteq \ker(\varepsilon_H).&
\end{eqnarray*}
If $H$ is in addition a Hopf algebra, then the antipode $S_H:H\to H$ is necessarily a graded linear map (\cite[Corollary 5.1.4]{HS}), and we refer to such $H$ as a \emph{$\Gamma$-graded Hopf algebra}. 

A $\Gamma$-graded bialgebra  $H=\bigoplus_{\gamma\in \Gamma}H_\gamma$ is called \emph{connected} if $H_0$ is one-dimensional, namely it is connected as a $\Gamma$-graded algebra.  Such $H$ is necessarily a Hopf algebra with antipode 
\begin{eqnarray} \label{antipode}
	S_H=\sum_{n\geq 0} (\eta_H\circ \varepsilon_H -\id_H)^{* n}.
\end{eqnarray}
One may refer to \cite[Corollary 5.2.12]{HS} for this basic result. The sum in (\ref{antipode}) is actually finite when evaluated on any $x\in H$. More precisely,  $(\eta_H\circ \varepsilon_H -\id_H)^{* n}(x)=0$ for any $x\in H_\gamma$ and $n>N(\gamma)$. Since the convolution product of any two of graded linear endomorphisms of $H$ is again graded linear, the expression (\ref{antipode}) tells us explicitly that $S_H$ is also a graded linear map.

Let $H$ be a connected $\Gamma$-graded Hopf algebra. Then the coradical of $H$ is $H_0$. Therefore, the coradical filtration $\{H(n)\}_{n\in \mathbb{N}}$ of $H$  is a Hopf algebra filtration. It is easy to see that each layer $H(n)$ is a homogeneous subspace of $H$, so the associated graded Hopf algebra
\[
\gr(H) := \bigoplus_{n\in \mathbb{N}} H(n)/H(n-1)
\]is naturally  a connected $\Gamma$-graded Hopf algebra with 
\[
\gr(H)_\gamma = \bigoplus_{n\in \mathbb{N}} \Big(H(n)/H(n-1)\Big)_\gamma, \quad \gamma\in \Gamma.
\]
Note that $\gr(H)$ is a commutative algebra, see \cite[Proposition 1.6]{AgSo0} or \cite[Proposition 6.4]{Zh}. Actually, it is an immediate consequence of \cite[Theorem 11.2.5.a]{Sw}). Therefore, the graded dual $$\gr(H)^*:=\bigoplus_{\gamma\in \Gamma} \Hom(\gr(H)_\gamma, k)$$ is naturally a cocommutative connected $\Gamma$-graded Hopf algebra.

Recall that a $\Gamma$-graded vector space $A=\bigoplus_{\gamma\in \Gamma}A_\gamma$ is called \emph{locally finite} if $A_\gamma$ is finite dimensional for each $\gamma\in \Gamma$. In this case, the \emph{Hilbert series} of $A$ is defined to be the formal series 
\[
\hh_A(\textbf{\rm t}) := \sum_{\gamma\in \Gamma} \dim(A_\gamma)~ \textbf{\rm t}^\gamma \in \mathbb{Z}[[\Gamma]].
\]
Here,  $\mathbb{Z}[[\Gamma]]$ is the ring of formal series on $\Gamma$ with integral coefficients. Its elements are of the form $\sum_{\gamma\in \Gamma} n_\gamma~ \textbf{\rm t}^\gamma$ with $n_\gamma\in \mathbb{Z}$, where $\textbf{\rm t}^\gamma$ is simply a place symbol. 

The following result should be well-known. We provide a proof for reader's convenience. 

\begin{lemma}\label{Hilbert series}
Assume the base field $k$ is of characteristic zero. Let $H$ be a connected $\Gamma$-graded Hopf algebra which is locally finite. Let $P=\bigoplus_{\gamma\in \Gamma} P_\gamma$ be the primitive space of $\gr(H)^*$.  Then 
\[
\hh_H(\textbf{\rm t}) = \hh_{\gr(H)}(\textbf{\rm t}) = \hh_{\gr(H)^*}(\textbf{\rm t}) = \prod_{\gamma\in \Gamma\backslash\{0\}} (1 -\textbf{\rm t}^\gamma)^{-\dim (P_\gamma)}.
\]
\end{lemma}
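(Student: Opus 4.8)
The plan is to prove the three displayed equalities separately, the first two by dimension counting and the third by the structure theory of cocommutative connected graded Hopf algebras in characteristic zero.

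First I would establish $\hh_H = \hh_{\gr(H)}$. Since each layer $H(n)$ of the coradical filtration is a homogeneous subspace, one has $H(n)_\gamma = H(n)\cap H_\gamma$ and, because $H(n-1)\subseteq H(n)$ with both homogeneous, $\big(H(n)/H(n-1)\big)_\gamma \cong H(n)_\gamma/H(n-1)_\gamma$ for every $\gamma$. Hence
\[
\dim \gr(H)_\gamma = \sum_{n\geq 0}\big(\dim H(n)_\gamma - \dim H(n-1)_\gamma\big).
\]
As $H$ is locally finite, $H_\gamma$ is finite dimensional and $\{H(n)\cap H_\gamma\}_{n}$ is an exhaustive filtration of it, so this telescoping sum collapses to $\dim H_\gamma$. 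This gives $\hh_{\gr(H)} = \hh_H$ and, in particular, shows $\gr(H)$ is locally finite. The equality $\hh_{\gr(H)} = \hh_{\gr(H)^*}$ is then immediate, since for any locally finite $\Gamma$-graded vector space $A$ one has $\dim (A^*)_\gamma = \dim \Hom(A_\gamma,k) = \dim A_\gamma$ for each $\gamma$.

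The heart of the matter is the final equality, which I would deduce from the structure of $\gr(H)^*$. As recalled in the excerpt, $\gr(H)^*$ is a cocommutative connected $\Gamma$-graded Hopf algebra; since $k$ has characteristic zero, the Cartier--Milnor--Moore theorem identifies it, as a graded Hopf algebra, with the universal enveloping algebra $U(P)$ of its Lie algebra of primitives $P$. Here $P$ is concentrated in degrees $\gamma\neq 0$, because $\gr(H)^*$ is connected. The Poincar\'e--Birkhoff--Witt theorem then shows that $U(P)$ has, as a $\Gamma$-graded vector space, the same Hilbert series as the symmetric algebra $S(P)$; choosing a homogeneous basis of $P$ and forming ordered monomials, and collecting exponents by degree, yields
\[
\hh_{\gr(H)^*}(\textbf{\rm t}) = \hh_{S(P)}(\textbf{\rm t}) = \prod_{\gamma\in \Gamma\backslash\{0\}} (1-\textbf{\rm t}^\gamma)^{-\dim(P_\gamma)}.
\]
The main obstacle, and the only point where characteristic zero is genuinely needed, is this appeal to Milnor--Moore to pass from ``cocommutative connected graded'' to ``an enveloping algebra''; once that identification is secured, PBW reduces the computation to the standard Hilbert-series count for a polynomial algebra.
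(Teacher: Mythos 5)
Your proof is correct and follows essentially the same route as the paper's: the first two equalities by (tele)scoping dimensions of homogeneous components and graded duality, and the crucial last equality via Cartier--Milnor--Moore to identify $\gr(H)^*$ with $U(P)$, followed by PBW and the standard Hilbert-series count for ordered monomials. The only difference is that you spell out the details the paper dismisses as ``clear,'' which is harmless.
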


\begin{proof}
The first two equalities are clear. Since the $\Gamma$-graded Hopf algebra $\gr(H)^*$ is cocommutative, it is isomorphic to the universal enveloping algebra of  $P$ as $\Gamma$-graded Hopf algebras by the Cartier-Milnor-Moore theorem (see \cite[Corollary 4.18 and Theorem 5.18]{MM} or \cite[Theorem 3.8.1]{Cart}). Choose a homogeneous basis $X$ of $P$ and  fix a total order on it. By the well-known Poincar\'{e}-Birkhoff-Witt theorem, the nondecreasing finite product of the elements of $X$ form a homogeneous basis of  $\gr(H)^*$. The last equality then follows by a standard combinatorial argument. 
\end{proof}

\subsection{Diagonalization of the Adams operators}
Assume the base field $k$ is of characteristic zero. Let $H$ be a connected $\Gamma$-graded Hopf algebra, where $\Gamma$ is a nontrivial free abelian monoid.

For any linear map $f\in \End(H)$ with $f(1_H)=0$ and any linear map $g\in \End(H)$ with $g(1_H)=1_H$, one may define $\exp(f),~ \log(g) \in \End(H)$ by the following series expansions
\begin{eqnarray*}
	\exp(f) &:=& \sum_{r\geq 0}\frac{1}{r!} f^{*r}\\
	\log(g) &:=& \sum_{r=1}^{\infty}\frac{(-1)^{r-1}}{r} (g-\eta_H\circ \varepsilon_H)^{*r}.
\end{eqnarray*}
Since the coradical of $H$ equals to $H_0$, they are both well-defined. For integers $n\geq 0$, let
\[
e^{(n)}:= \frac{1}{n!} \log(\id_H)^{*n}.
\]
They are  graded linear maps of $H$ and 
$$e^{(n)}|_{H_\gamma} = 0 \quad \text{for} \quad  \gamma \in \Gamma~ \text{ and } ~ n > N(\gamma),$$ 
Moreover, it is standard to check that
\begin{eqnarray}\label{Eulerian-expansion}
	\Psi_n= \exp(n\log(\id_H)) = \sum_{r\geq 0} n^r ~ e^{(r)}.
\end{eqnarray}

In the case  where $H$ is commutative or cocommutative, the maps $(e^{(n)})_{n\geq 0}$ form a complete orthogonal system of idempotent operators on $H$. That is
\begin{eqnarray}\label{orthogonal-system}
\id_H=\sum_{n\geq0} e^{(n)}, \quad	e^{(n)}\circ e^{(n)} =e^{(n)} \quad \text{and} \quad e^{(m)} \circ e^{(n)} =  0 ~ \text{ for } ~ m\neq n.
\end{eqnarray}
In literature, the map $e^{(n)}$ is called the $n$-th \emph{Eulerian idempotent} of such $H$. It follows that the restriction maps $e^{(0)}|_{H_\gamma}, \ldots, e^{(N(\gamma))}|_{H_\gamma}$ form a complete orthogonal system of idempotent operators on $H_\gamma$, and hence the maps $(e^{(n)}|_{H_\gamma})_{n\geq 0}$ are simultaneous diagonalizable for each $\gamma\in \Gamma$. 

We refer to \cite{Pat} or \cite[Section 4.5]{Lod} for proofs of the above-mentioned results. Apply the expression (\ref{Eulerian-expansion}), one gets the following  result, which is well-known to experts.

\begin{proposition} \label{diagonalizable}
Assume the base field $k$ is of characteristic zero. Let $H=\bigoplus_{\gamma\in \Gamma}H_\gamma$ be a connected $\Gamma$-graded Hopf algebra which is commutative or cocommutative. Then the restrictions $(\Psi_n|_{H_\gamma})_{n\in \mathbb{Z}}$ of the Adams operators are simultaneously diagonalizable for each $\gamma\in \Gamma$. \hfill $\Box$
\end{proposition}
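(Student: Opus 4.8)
The plan is to produce, for each fixed $\gamma$, a single basis of $H_\gamma$ that diagonalizes every $\Psi_n|_{H_\gamma}$ at once, and to obtain it as the common eigenspace decomposition furnished by the Eulerian idempotents. Fix $\gamma\in\Gamma$; since $\Gamma$ is a free abelian monoid, $N(\gamma)$ is finite. The crucial structural input is that, because $H$ is commutative or cocommutative, the operators $(e^{(n)})_{n\geq 0}$ form a complete orthogonal system of idempotents by (\ref{orthogonal-system}). Combined with the vanishing $e^{(r)}|_{H_\gamma}=0$ for $r>N(\gamma)$, restriction to $H_\gamma$ yields the finite complete orthogonal system $e^{(0)}|_{H_\gamma},\dots,e^{(N(\gamma))}|_{H_\gamma}$ already recorded in the text. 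Setting $V_r:=\operatorname{im}\big(e^{(r)}|_{H_\gamma}\big)$ for $0\leq r\leq N(\gamma)$, completeness and orthogonality give the decomposition $H_\gamma=\bigoplus_{r=0}^{N(\gamma)}V_r$, on which $e^{(s)}|_{H_\gamma}$ acts as $\delta_{rs}$ on the summand $V_r$.

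Next I would read off the action of the Adams operators from the Eulerian expansion (\ref{Eulerian-expansion}), namely $\Psi_n=\sum_{r\geq 0}n^r e^{(r)}$. For $v\in V_r$ this gives $\Psi_n(v)=\sum_{s}n^s e^{(s)}(v)=n^r v$, so each $V_r$ is a common eigenspace of the whole family $(\Psi_n|_{H_\gamma})_{n}$, with eigenvalue $n^r$ on $V_r$. Hence any basis of $H_\gamma$ subordinate to $\bigoplus_{r}V_r$ is a simultaneous eigenbasis for all the $\Psi_n|_{H_\gamma}$, which is exactly the assertion of simultaneous diagonalizability.

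The one point requiring a little care is the range of $n$: the statement is about all $n\in\mathbb{Z}$, whereas the idempotents $e^{(r)}$ were introduced for $n\geq 0$. I would note that $\Psi_n=\exp(n\log(\id_H))$ is valid for negative $n$ as well, since $\exp(\log(\id_H))=\id_H$ in the convolution algebra forces $\Psi_{-1}=\exp(-\log(\id_H))=S_H$ and, more generally, $\Psi_n=(\id_H)^{*n}=\exp(n\log(\id_H))=\sum_{r\geq 0}n^r e^{(r)}$ for every $n\in\mathbb{Z}$. This ensures that the single decomposition $\bigoplus_r V_r$ serves all integers $n$ simultaneously, rather than only the nonnegative ones.

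I do not expect any genuine obstacle here: the substantive content is concentrated entirely in the orthogonal idempotent property (\ref{orthogonal-system}), which is precisely where the commutativity-or-cocommutativity hypothesis is used and which I would simply import from \cite{Pat} or \cite[Section 4.5]{Lod}; once that is granted, the remainder is the bookkeeping outlined above. If one wished to be self-contained, the only nontrivial step would be reproving that the $e^{(n)}$ are orthogonal idempotents, for which the standard route is to verify $e^{(m)}*e^{(n)}$ relations using that $\log(\id_H)$ is $*$-primitive when $H$ is (co)commutative, but given the cited results this is unnecessary.
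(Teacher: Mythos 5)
Your proof is correct and follows essentially the same route as the paper: the paper likewise derives the proposition from the complete orthogonal system of Eulerian idempotents $(e^{(n)})_{n\geq 0}$ in the (co)commutative case together with the expansion $\Psi_n=\sum_{r\geq 0}n^r\,e^{(r)}$, restricted to each $H_\gamma$. Your extra remark on extending the expansion to negative $n$ via $\Psi_n=\exp(n\log(\id_H))$ is exactly the justification implicit in the paper's statement for all $n\in\mathbb{Z}$.
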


\begin{remark}
The above result only tells us the existence of a common eigenbasis for the Adams operators. In \cite[Section 3]{DPRa} and \cite[Section 2.5]{Pang}, there is presented several algorithms to construct a common eigenbasis for the Adams operators when $H$ is either commutative or cocommutative, by using the first Eulerian idempotent and a combinatorial tool called Lyndon words.
\end{remark}

Proposition \ref{diagonalizable} fails without the assumption that being commutative or cocommutative. In fact, even $\Psi_2|_{H_\gamma}$ itself may be not  diagonalizable as shown by the following example.

\begin{example}\label{counterexample}
Let $H=\mathfrak{S}{\rm Sym}$ be the Hopf algebra of permutations (over the field of rational numbers) introduced by Malvenuto and Reutenauer \cite{MaRe}. It is a connected $\mathbb{N}$-graded Hopf algebra which is neither commutative nor cocommutative. For each $m\in \mathbb{N}$, there are two canonical bases on the the homogeneous component $H_m$. One is the fundamental basis $\{ \mathcal{F}_\sigma ~|~ \sigma \in \mathfrak{S}_m \}$ and the other is the monomial basis $\{ \mathcal{M}_\sigma ~|~ \sigma \in \mathfrak{S}_m \}$, where $\mathfrak{S}_m$ is the $m$-th symmetric group. We refer to \cite{AgSo} for the formulas of the (co)product of $H$ on these bases elements. We us one-line notation for permutations, writing $\sigma=[u_1,\ldots, u_m]$ or simply $\sigma=u_1\ldots u_m$, where $u_i=\sigma(i)$. For $m=3$ it is easy to see
\begin{eqnarray*}
	\Psi_2(\mathcal{F}_{123} ) &=& 4 ~ \mathcal{F}_{123} + \mathcal{F}_{132} + \mathcal{F}_{213} +\mathcal{F}_{231} + \mathcal{F}_{312}\\
	\Psi_2(\mathcal{F}_{132} ) &=& \mathcal{F}_{123} + 4 ~ \mathcal{F}_{132} + 2~ \mathcal{F}_{312} + \mathcal{F}_{321} \\
	\Psi_2(\mathcal{F}_{213} ) &=& \mathcal{F}_{123} + 4 ~ \mathcal{F}_{213} + 2~ \mathcal{F}_{231} + \mathcal{F}_{321} \\
	\Psi_2(\mathcal{F}_{231} ) &=& \mathcal{F}_{123} + 2~\mathcal{F}_{132} + 2 ~ \mathcal{F}_{231} + 2~\mathcal{F}_{312}+ \mathcal{F}_{321} \\
	\Psi_2(\mathcal{F}_{312} ) &=& \mathcal{F}_{123} + 2~\mathcal{F}_{213} + 2~  \mathcal{F}_{231} + 2~ \mathcal{F}_{312} + \mathcal{F}_{321}\\
	\Psi_2(\mathcal{F}_{321} ) &=& \mathcal{F}_{132} + \mathcal{F}_{213} + \mathcal{F}_{231} +  \mathcal{F}_{312} + 4~ \mathcal{F}_{321}
\end{eqnarray*}
and 
\begin{eqnarray*}
	\quad \quad    	\Psi_2(\mathcal{M}_{123} ) &=& 2 ~ \mathcal{M}_{123} \\
	\quad \quad    	\Psi_2(\mathcal{M}_{132} ) &=& 2 ~ \mathcal{M}_{132} \\
	\quad \quad   	\Psi_2(\mathcal{M}_{213} ) &=& 2 ~ \mathcal{M}_{213} \\
	\quad \quad    	\Psi_2(\mathcal{M}_{231} ) &=& \mathcal{M}_{123} + 2~\mathcal{M}_{132} + 3 ~ \mathcal{M}_{231} + \mathcal{M}_{312} \\
	\quad \quad   	\Psi_2(\mathcal{M}_{312} ) &=& \mathcal{M}_{123} + 2~\mathcal{M}_{213} +  \mathcal{M}_{231} + 3~ \mathcal{M}_{312} \\
	\quad   \quad	\Psi_2(\mathcal{M}_{321} ) &=& \mathcal{M}_{132} + \mathcal{M}_{213} + 2~ \mathcal{M}_{231} + 2~ \mathcal{M}_{312} + 8~ \mathcal{M}_{321}.
\end{eqnarray*}
So the characteristic polynomial (resp. minimal polynomial) of $\Psi_2|_{H_3}$ is
\[
(x-2)^4(x-4)(x-8) \quad \Big(~\text{resp. } (x-2)^2(x-4)(x-8) ~ \Big).
\]
Therefore, $\Psi_2|_{H_3}$ is not diagonalizable. In addition, it is worth to mention that the matrix of $\Psi_2|_{H_3}$ with respect to any one of the two bases is not upper triangular for any total order on $\mathfrak{S}_3$.
\end{example}

\section{Main results}
\label{main-results}

We have shown that the Adams operators on connected graded Hopf algebras may not be diagonalizable in general. So it is natural to ask that what kind of weaker linear properties can these maps share.  This section is devoted to present an answer to this question.  

We begin by establishing some notation and conventions. Throughout, let $\Gamma$ stand for a nontrivial free abelian monoid. Suppose that there is given  a connected $\Gamma$-graded algebra $H=\bigoplus_{\gamma\in \Gamma}H_\gamma$ and a family  $\{z_\xi\}_{\xi\in \Xi}$ of homogeneous elements of $H$. We write $\mathcal{P}(\Xi)$ for the set of finite sequences of elements in $\Xi$. For each sequence $V=(\nu_1,\ldots, \nu_r)\in \mathcal{P}(\Xi)$, let 
$$z_V:= z_{\nu_1} \cdots z_{\nu_r}, \quad  l(V):=r \quad \text{ and } \quad |V|:=\deg(z_V)=\sum_{i=1}^r\deg(z_{\nu_i}).$$
By convention, $\mathcal{P}(\Xi)$ contains the empty sequence $\emptyset$ and  we write $z_\emptyset :=1$.

For a partial order $\vartriangleleft$ on $\Xi$ and  a map $h:\Xi\to \mathbb{N}_\infty$, we let
$$\mathcal{P}(\Xi,\vartriangleleft,h):= \{~ ( {\small \overbrace{\xi_{1},\ldots, \xi_{1}}^{p_{1}}}, \ldots,  {\small \overbrace{\xi_{l},\ldots, \xi_{l}}^{p_{l}}})~ |~ \xi_1\vartriangleleft \cdots \vartriangleleft \xi_l\in \Xi, \, p_i <h(\xi_i), \, i=1,\ldots, l ~ \}.$$
In addition, for each $\xi\in \Xi$ let
\begin{eqnarray*}
	H[\xi, \vartriangleleft,h] := {\rm Span} \big(\{~ z_V ~|~V=(\nu_1,\ldots, \nu_m) \in \mathcal{P}(\Xi,\vartriangleleft,h), ~  \nu_m \vartriangleleft \xi  ~\} \big). 
\end{eqnarray*}
Note that $H[\xi, \vartriangleleft,h]$ is a  homogeneous subspace of $H$ that contains $z_\emptyset$. 

For a partial order $\vartriangleleft$ on $\Xi$, we may extend it to two partial orders $\vartriangleleft_L$ and $\vartriangleleft_R$ on $\mathcal{P}(\Xi)$  as follows. For finite sequences $U=(\mu_1,\ldots, \mu_r)$ and $V= (\nu_1,\ldots, \nu_s)$ in $\mathcal{P}(\Xi)$, we write $$U \vartriangleleft_L V \quad  (\text{resp. } ~ U\vartriangleleft_R V) $$
if $|U|+\gamma = |V|$ for some $\gamma\neq 0$, or $|U| = |V|$ but there is an integer $l\leq \min\{r, s\}$ such that $\mu_i =\nu_i$ for $i< l$ and $\mu_l\vartriangleleft \nu_l$ (resp. $\mu_{r+1-i} =\nu_{s+1-i}$ for $i< l$ and $\mu_{r+1-l}\vartriangleleft \nu_{s+1-l}$). Clearly, $\vartriangleleft_L$ and $\vartriangleleft_R$  are both compatible with the left and right concatenation of finite sequences. Moreover, if $\vartriangleleft$ is a total order then any two finite sequences of the same degree are comparable with respect to $\vartriangleleft_L$ and $\vartriangleleft_R$. In the special case that $\Gamma=\mathbb{N}$,  if $\vartriangleleft$ is a total order then so are $\vartriangleleft_L$ and $\vartriangleleft_R$.

Now we are ready to present our main results. The first one characterizes the value of the Adams operators on specific elements of connected graded Hopf algebras.

\begin{theorem}\label{Adams operators}
Let $H=\bigoplus_{\gamma\in \Gamma}H_\gamma$ be a connected $\Gamma$-graded Hopf algebra. Let $\{z_\xi\}_{\xi\in \Xi}$ be a family of homogeneous elements of $H_+$, $\vartriangleleft$ a total order on $\Xi$, and $h:\Xi\to \mathbb{N}_\infty$ a map with $h(\xi)\geq 2$ for each $\xi\in \Xi$. Assume that the following conditions hold:
	\begin{enumerate} 
		\item $\Delta_H(z_\xi) \in 1\otimes z_\xi +z_\xi\otimes 1 +H[\xi, \vartriangleleft,h]_+\otimes H[\xi, \vartriangleleft,h]_+$ for each $\xi\in \Xi$;
		\item $z_\xi^{h(\xi)} \in H[\xi, \vartriangleleft,h]$ for each $\xi\in \Xi$ with $h(\xi) <\infty$;
		\item $z_\nu z_\mu \in k^{\times}\cdot z_\mu z_\nu + H[\nu, \vartriangleleft,h]$ for each pair $\mu, \nu\in \Xi$ with $\mu \vartriangleleft \nu$;
		\item the restriction of $\vartriangleleft$ to the set $\{~ \xi\in \Xi~|~ \deg(z_\xi)=\gamma ~\}$ is a well order for each $\gamma\in \Gamma$.
	\end{enumerate} 
	Then for each integer $n$ and each sequence $V\in \mathcal{P}(\Xi,\vartriangleleft,h)$, one has
	\[ 
	\Psi_n(z_V)\in n^{l(V)} \cdot z_V + {\rm Span} \big(\{~ z_W~|~W  \in \mathcal{P}(\Xi,\vartriangleleft,h), ~ W \vartriangleleft_R V ~\}\big ).
	\]
\end{theorem}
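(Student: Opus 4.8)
The plan is to deduce the statement for all integers $n$ from the two elementary cases $\Psi_1=\id_H$ and $\Psi_{-1}=S_H$, propagated through the convolution identity $\Psi_{m+n}=\Psi_m*\Psi_n$ (immediate from $\Psi_n=\id_H^{*n}$). To organize ``lower order'', I would work with the filtration $\mathcal{F}_V:={\rm Span}(\{z_W\mid W\in\mathcal{P}(\Xi,\vartriangleleft,h),\,W\vartriangleleft_R V\})$. First I would check that $\vartriangleleft_R$ restricts to a \emph{well order} on each homogeneous component $\{V\mid |V|=\gamma\}$: lengths there are bounded by $N(\gamma)<\infty$, and condition (4) makes the relevant alphabet well ordered, so the right-lexicographic order is well founded; moreover a step in $\vartriangleleft_R$ either strictly drops the degree or keeps it. This well-foundedness is what will license induction and guarantee that every rewriting terminates.

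\textbf{The two filtration lemmas (technical heart).} Next I would establish two closure properties against this filtration. The \emph{product lemma}, from conditions (2) and (3): for sorted $V',V''$ the product $z_{V'}z_{V''}$ equals the sorted word on the multiset union with leading coefficient $1$, modulo $\mathcal{F}$-lower terms; indeed every adjacent out-of-order swap by (3) and every power truncation by (2) only produces strictly $\vartriangleleft_R$-lower words, and the process terminates by well-foundedness. A small but \emph{essential} sub-point here, forced by applying the algebra map $\Delta_H$ to both sides of (3) and comparing the coefficient of the basis tensor $z_\mu\otimes z_\nu$ (which is $1$ on one side and the unknown scalar on the other, the error terms of (1) contributing nothing to this tensor), is that the scalar in (3) is necessarily $1$. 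The \emph{coproduct lemma}, from condition (1): expanding $\Delta_H(z_V)=\prod_i\Delta_H(z_{\nu_i})$ and using that each $z_\xi$ is primitive modulo $H[\xi,\vartriangleleft,h]_+\otimes H[\xi,\vartriangleleft,h]_+$, one gets, modulo tensors with at least one leg strictly $\mathcal{F}$-lower, the deconcatenation $\sum_{V=V'\sqcup V''}\big(\prod_i\binom{p_i}{k_i}\big)\,z_{V'}\otimes z_{V''}$ over sub-multisets of $V=(\xi_1^{p_1},\dots,\xi_l^{p_l})$. The point to verify is that any contribution using a leg in $H[\xi,\vartriangleleft,h]_+$, once multiplied out and re-sorted, lands strictly below $z_V$: this is because $H[\xi,\vartriangleleft,h]_+$ is spanned by sorted words all of whose letters are $\vartriangleleft\xi$, so the largest letter of $V$ can never be reconstituted, forcing a drop in $\vartriangleleft_R$.

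\textbf{Convolution lemma, base cases, and conclusion.} With these in hand I would prove the \emph{convolution lemma}: if $f,g\in\End(H)$ are $\vartriangleleft_R$-upper triangular on the $z_V$ with diagonal coefficients $a^{l(V)}$ and $b^{l(V)}$ respectively, then $f*g=\mu_H\circ(f\otimes g)\circ\Delta_H$ is again $\vartriangleleft_R$-upper triangular with diagonal $(a+b)^{l(V)}$. Feeding the coproduct lemma into $(f*g)(z_V)=\sum f((z_V)_{(1)})\,g((z_V)_{(2)})$, the leading deconcatenation terms contribute $\sum_{V'\sqcup V''}\prod_i\binom{p_i}{k_i}a^{l(V')}b^{l(V'')}$, which factors over the distinct letters and collapses by the binomial theorem to $\prod_i(a+b)^{p_i}=(a+b)^{l(V)}$ --- here the product lemma with leading coefficient exactly $1$ is precisely what makes the cross terms $z_{V'}z_{V''}$ re-sort to $z_V$ with coefficient $1$ --- while all remaining terms are $\vartriangleleft_R$-lower by the two lemmas. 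The base cases are $\Psi_1=\id_H$ (diagonal $1^{l(V)}$), $\Psi_0=\eta_H\circ\varepsilon_H$ (diagonal $0^{l(V)}$, using $0^0=1$ and $z_V\in\ker\varepsilon_H$ for $V\neq\emptyset$), and $\Psi_{-1}=S_H$, which I would treat directly from the recursive formula \eqref{antipode}, reading off upper triangularity with diagonal $(-1)^{l(V)}$. Writing an arbitrary $n$ as an $|n|$-fold convolution of $\Psi_1$ or $\Psi_{-1}$ and applying the convolution lemma inductively then yields that $\Psi_n$ is $\vartriangleleft_R$-upper triangular with diagonal $n^{l(V)}$, which is exactly the assertion.

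\textbf{Main obstacle.} I expect the hard part to be the coproduct lemma, specifically the claim that \emph{every} error contribution --- coming from the primitivity defect in (1), the reorderings in (3), and the power truncations in (2) --- descends strictly in the single order $\vartriangleleft_R$ after the convolution product is carried out, with all rewritings terminating by (4). Coordinating these three mechanisms against one order is the delicate bookkeeping that the technical formulas of Section 3 are presumably meant to package, and the forced value $1$ of the scalar in (3) is the inconspicuous step without which the clean diagonal $n^{l(V)}$ would already fail, consistent with the non-triangular off-diagonal behaviour seen in Example \ref{counterexample}.
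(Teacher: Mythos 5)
Your overall architecture coincides with the paper's: the filtration by $\vartriangleleft_R$ together with well-foundedness from condition (4) is Lemma \ref{well-founded-order}, your product lemma is Lemma \ref{rearragement}, your coproduct/deconcatenation lemma is Lemma \ref{comultiplicatoin-expression}, your convolution lemma with the binomial collapse is Proposition \ref{convolution-expression} combined with Lemma \ref{general formula}, and your treatment of $n<0$ via convolution powers of $S_H$ is exactly Section \ref{Proof-Adams-operators}.

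The genuine gap is your ``essential sub-point'' that the scalar in condition (3) is \emph{necessarily} $1$. Theorem \ref{Adams operators} nowhere assumes that $\{z_V\}_{V\in\mathcal{P}(\Xi,\vartriangleleft,h)}$ is linearly independent (that hypothesis only becomes available in applications, through Theorem \ref{PBW-basis}), so ``comparing the coefficient of the basis tensor $z_\mu\otimes z_\nu$'' is not a legitimate operation: without independence this coefficient is undefined, and the contributions of the error terms of (1) and of $\Delta_H(w)$ for $w\in H[\nu,\vartriangleleft,h]$ cannot be dismissed by inspection. Indeed the claim is false at the stated level of generality: take $H=k[x]$ with $x$ primitive, $z_\mu=x$, $z_\nu=x^2$, $h\equiv\infty$; then $H[\nu,\vartriangleleft,h]=k[x]$, so condition (3) holds with \emph{every} scalar $a\in k^\times$, not only $a=1$. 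What your argument does prove is weaker and conditional: \emph{when} the $z_V$ are linearly independent, $1$ is a valid choice of scalar. Since you yourself observe that the leading coefficient being exactly $1$ is what makes the cross terms $z_Wz_{V/W}$ re-sort to $z_V$ with coefficient $1$, this gap propagates into your convolution lemma and is not cosmetic. The paper's route around it is different: Lemma \ref{rearragement} carries arbitrary scalars $a_{\nu,\mu}$ (the leading coefficient of a rearranged word is the product of the inversion scalars), and Lemma \ref{comultiplicatoin-expression} is organized---by splitting $V=V'V''$ with $V'$ the run of the $\vartriangleleft$-least letter and inducting on the number of distinct letters---so that the leading deconcatenation terms $z_{W'}z_{W''}\otimes z_{V'/W'}z_{V''/W''}$ are products of already nondecreasing sequences, hence no reordering scalar ever enters them. (The residual re-sorting of $z_Wz_{V/W}$ inside Proposition \ref{convolution-expression} is where the paper itself is terse; note that in the intended application the data produced by Theorem \ref{PBW-basis} have all scalars equal to $1$ by construction, so your instinct that this is the delicate point is sound, but your justification does not work under the theorem's stated hypotheses.)

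A secondary imprecision: the diagonal $(-1)^{l(V)}$ of $S_H$ cannot simply be ``read off'' from (\ref{antipode}); that formula yields triangularity with some unknown diagonal $b(V)$, and the paper then pins down $b(V)=(-1)^{l(V)}$ by applying Proposition \ref{convolution-expression} to $\id_H*S_H=\eta_H\circ\varepsilon_H$ and inducting on $l(V)$, an extra step your sketch omits.
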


\begin{proof}
In order to be make the reading easier, the proof of this theorem will be addressed in Section \ref{Proof-Adams-operators}. Some auxiliary technical formulas are established in Section \ref{Preparation-Adams-operators}.
\end{proof}

Under the setting of the above theorem, if in addition $\{z_V\}_{V\in \mathcal{P}(\Xi,\vartriangleleft,h)}$ happens to be a basis of $H$, then the result tells us that the restrictions of the Adams operators to each component $H_\gamma$ are simultaneously upper triangularizable with respect to a totally ordered basis, with diagonal entries determined. Below, we present two examples of such setting with $\{z_V\}_{V\in \mathcal{P}(\Xi,\vartriangleleft,h)}$ forms a basis of $H$.

\begin{example}
Let $H$ be a commutative connected $\Gamma$-graded Hopf algebra. Assume that the base field $k$ is of characteristic $0$. Fix a linearly independent family $\{z_\xi\}_{\xi\in \Xi}$ of homogeneous elements of $H_+$ that spans a complement of $(H_+)^2$ in $H_+$. Let $h:\Xi\to \mathbb{N}_\infty$ be the map with constant value $\infty$. By the so-called Leray's theorem (see \cite[Theorem 7.5]{MM} or \cite[Theorem 3.8.3]{Cart}), the family $\{z_V\}_{V\in \mathcal{P}(\Xi,<,h)}$ forms a basis of $H$ for any total order $<$ on $\Xi$. Next we proceed to define a specific total order $\vartriangleleft$ on $\Xi$.  Choose a well order $<_\Gamma$ on $\Gamma$ which is compatible with the addition of $\Gamma$ in the sense that if $\gamma_1<_\Gamma \gamma_2$ then $\gamma+\gamma_1<_\Gamma\gamma+\gamma_2$ for any $\gamma_1,\gamma_2, \gamma \in \Gamma$; also choose  a well order $<_\gamma$ on $\{~\xi\in \Xi~|~\deg(z_\xi)=\gamma~\}$ for each  $\gamma\in \Gamma \backslash\{ 0\}$. Now define a total order $\vartriangleleft$ on $\Xi$  as follows: for $\mu, \nu\in \Xi$,
\begin{eqnarray*}\label{definition-deglex-1}
	\mu \vartriangleleft \nu \,\, \Longleftrightarrow \,\, \left\{
	\begin{array}{llll}
		\deg(z_\mu)<_\Gamma\deg(z_\nu), \quad \text{or} && \\
		\deg(z_\mu) =\deg(z_\nu)\, \text{ and } \mu <_{\deg(z_\mu)} \nu.
	\end{array}\right.
\end{eqnarray*}
It is easy to check that the triple $(\{z_\xi\}_{\xi\in \Xi}, \vartriangleleft, h)$ fulfills the requirements of Theorem \ref{Adams operators}. 
\end{example}

\begin{example}\label{cocommutative}
Let $H$ be a cocommutative connected $\Gamma$-graded Hopf algebra. Assume  $k$ is of characteristic $0$. By the Cartier-Milnor-Moore theorem, $H$ is isomorphic to the universal enveloping algebra of its primitive space $P(H)$ as $\Gamma$-graded Hopf algebras. Fix a homogeneous basis $\{z_\xi\}_{\xi\in \Xi}$ of  $P(H)$. Let $h:\Xi\to \mathbb{N}_\infty$ be the map with constant value $\infty$. By the well-known Poincar\'{e}-Birkhoff-Witt theorem, the family $\{z_V\}_{V\in \mathcal{P}(\Xi,<,h)}$ forms a basis of $H$ for any total order $<$ on $\Xi$. Next we proceed to define a specific total order $\vartriangleleft$ on $\Xi$. As done in the previous example, choose a well order $<_\Gamma$ on $\Gamma$ which is compatible with the addition of $\Gamma$ as well as a well order $<_\gamma$ on $\{~\xi\in \Xi~|~\deg(z_\xi)=\gamma~\}$ for each  $\gamma\in \Gamma \backslash\{ 0\}$. Now define a total order $\vartriangleleft$ on $\Xi$  as follows: for $\mu,\nu\in \Xi$,
\begin{eqnarray*}\label{definition-deglex-2}
	\mu \vartriangleleft \nu \,\, \Longleftrightarrow \,\, \left\{
	\begin{array}{llll}
		\deg(z_\nu)<_\Gamma\deg(z_\mu), \quad \text{or} && \\
		\deg(z_\mu) =\deg(z_\nu)\, \text{ and } \mu <_{\deg(z_\mu)} \nu.
	\end{array}\right.
\end{eqnarray*}
Note that it is different from that of the previous example, where we first require $\deg(z_\mu)<_\Gamma\deg(z_\nu)$. Clearly, the triple $(\{z_\xi\}_{\xi\in \Xi}, \vartriangleleft, h)$ fulfills the requirements of Theorem \ref{Adams operators} except the third one. But for any pair $\mu, \nu\in \Xi$ with  $\mu \vartriangleleft \nu$, one has $[z_\nu, z_\mu]\in P(H)$ and therefore \[z_\nu z_\mu -z_\mu z_\nu \in {\rm Span} \big(\{~ z_\xi~|~\xi\in \Xi, ~ ~ \deg(z_\xi) =\deg (z_\mu) +\deg(z_v) ~\}.  \]
The third condition of Theorem \ref{Adams operators} then follows by the very definition of $\vartriangleleft$.  
\end{example}

One may ask that  does such kind of  triples $(\{z_\xi\}_{\xi\in \Xi}, \vartriangleleft, h)$ exist for general connected graded Hopf algebras? The following result gives a positive answer.


\begin{theorem}\label{PBW-basis}
Let $H=\bigoplus_{\gamma\in \Gamma}H_\gamma$ be a connected $\Gamma$-graded Hopf algebra. Then there exists a family $\{z_\xi\}_{\xi\in \Xi}$  of homogeneous elements of $H_+$,  a total order $\vartriangleleft$ on $\Xi$, and a map  $h:\Xi\to \mathbb{N}_\infty$ with $h(\xi)\geq 2$ for each $\xi\in \Xi$ such that the  following conditions hold:
\begin{enumerate}
	\item $\{z_V\}_{V\in \mathcal{P}(\Xi,\vartriangleleft,h)}$ is a basis of $H$;
	\item $\Delta_H(z_\xi) \in 1\otimes z_\xi +z_\xi\otimes 1 +H[\xi, \vartriangleleft,h]_+\otimes H[\xi, \vartriangleleft,h]_+$ for each $\xi\in \Xi$;
	\item $z_\xi^{h(\xi)} \in H[\xi, \vartriangleleft,h]$ for each $\xi\in \Xi$ with $h(\xi) <\infty$;
	\item $z_\nu z_\mu \in z_\mu z_\nu + H[\nu, \vartriangleleft,h]$ for each pair $\mu, \nu\in \Xi$ with $\mu \vartriangleleft \nu$;
	\item if $k$ is of characteristic $0$ then $h(\xi)$ is infinity for each $\xi\in \Xi$; while if $k$ is of characteristic $p>0$ then $h(\xi)$ is either infinity or a power of $p$ for  each $\xi\in \Xi$;
	\item the restriction of $\vartriangleleft$ to the set $\{~ \xi\in \Xi~|~ \deg(z_\xi)=\gamma ~\}$ is a well order for each $\gamma\in \Gamma$.
\end{enumerate}
\end{theorem}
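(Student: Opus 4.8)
The plan is to produce the triple $(\{z_\xi\}_{\xi\in\Xi}, \vartriangleleft, h)$ from a Gr\"obner--Shirshov (PBW) basis built out of Lyndon words, following the method of \cite{ZSL3} and Kharchenko \cite{Kh}, and then to verify that this basis meets the six listed conditions. First I would realize $H$ as a quotient $\pi\colon T(V)\twoheadrightarrow H$ of the tensor algebra on a $\Gamma$-graded vector space $V$ of minimal homogeneous generators, so that $V\cong H_+/(H_+)^2$. Choosing a homogeneous basis of $V$ indexed by an alphabet that is well ordered and compatible with the $\Gamma$-grading turns the words over this alphabet into a homogeneous basis of $T(V)$, on which I impose the degree-lexicographic order. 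Within the free monoid I single out the Lyndon words and their standard bracketings $[w]$, which form the Lyndon basis of the free Lie algebra and whose sorted monomials form a PBW basis of $T(V)$.

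Next I would define $\Xi$ to be the set of \emph{hard} Lyndon words relative to $\ker\pi$ in the sense of Kharchenko, set $z_\xi:=\pi([\xi])$, take $\vartriangleleft$ to be the degree-lexicographic order restricted to $\Xi$, and let $h(\xi)$ be the \emph{height} of $\xi$, i.e. the least $t$ with $z_\xi^{\,t}\in H[\xi,\vartriangleleft,h]$ (and $\infty$ if no such $t$ exists). Condition (1), that $\{z_V\}$ is a basis, is exactly the content of the Composition--Diamond lemma / PBW theorem established in \cite{ZSL3}; conditions (3) and (4) are the defining reduction rules attached to heights and to the interchange of super-letters, where in the connected graded setting the skew scalar degenerates to $1$, giving the commutator coefficient exactly $1$ as required. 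Condition (5), that finite heights are powers of the characteristic $p$, follows from the restricted (Frobenius) structure governing $p$-th powers of Lie elements. Condition (6) holds because every word of a fixed $\Gamma$-degree $\gamma$ has length at most $N(\gamma)$, and the lexicographic order on words of bounded length over a well-ordered alphabet is a well order.

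The heart of the argument, and the step I expect to be the main obstacle, is condition (2): the comultiplication formula $\Delta_H(z_\xi)\in 1\otimes z_\xi+z_\xi\otimes 1+H[\xi,\vartriangleleft,h]_+\otimes H[\xi,\vartriangleleft,h]_+$. Each minimal generator $x$ is primitive modulo $H_+\otimes H_+$ in strictly lower $\Gamma$-degree, and $z_\xi$ is an iterated commutator of such generators; using that $\Delta_H$ is an algebra map and that $\Delta_H([a,b])$ equals $[a,b]\otimes 1+1\otimes[a,b]$ plus cross terms built from lower pieces, I would induct on the order and the length of $\xi$ to peel off the two extreme terms. The delicate point is to show that after rewriting into the PBW basis every surviving tensor factor is spanned by monomials $z_W$ whose final letter is $\vartriangleleft\xi$ and which respect the height bounds, i.e. that the error genuinely lands in $H[\xi,\vartriangleleft,h]_+\otimes H[\xi,\vartriangleleft,h]_+$; this amounts to the statement that $z_\xi$ is primitive modulo $H[\xi,\vartriangleleft,h]$. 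Tracking this compatibility between $\Delta_H$ and the leading-word filtration through the Lyndon factorization, and matching it against the reductions used to define $\Xi$ and $h$, is where the bulk of the technical work --- and the genuine appeal to the constructions of \cite{ZSL3} --- resides.
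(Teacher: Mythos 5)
Your overall strategy coincides with the paper's: the paper also realizes $H$ as a quotient $\pi\colon k\langle X\rangle \to H$ of a free algebra on homogeneous generators, takes $\Xi$ to be the Kharchenko-style irreducible (``hard'') Lyndon words relative to $I=\ker\pi$, sets $z_\xi=\pi([\xi])$ with $[\xi]$ the Shirshov bracketing, lets $h$ be the height function, and delegates the heavy combinatorics to \cite{ZSL3,Zhou} (concretely, the paper lifts $\Delta_H$ to a ``bounded comultiplication'' $\tilde{\Delta}$ on $k\langle X\rangle$ and invokes \cite[Propositions 3.3 and 3.4]{Zhou}; this is the rigorous form of the induction you sketch for condition (2)).

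There is, however, one genuine error: the choice of order. You impose the \emph{degree-lexicographic} order on words and take $\vartriangleleft$ to be its restriction to $\Xi$. The theory requires the pseudo-lexicographic order $<_{\lex}$ of the paper, under which a Lyndon word of strictly \emph{larger} $\Gamma$-degree is very often \emph{smaller} than one of lower degree; this is not cosmetic, because with any degree-increasing order condition (4) fails already in the simplest noncommutative example. Take $H=k\langle x,y\rangle$, the free Hopf algebra on two primitives of degree $1$ with $x<y$. Here $I=0$, so $\Xi$ is the set of all Lyndon words, all heights are infinite, $z_{xy}=xy-yx$, and
\[
z_y z_x \;=\; z_x z_y - z_{xy}.
\]
Since $\deg(xy)=2>1=\deg(y)$, in degree-lex one has $xy\vartriangleright y$, so that $H[y,\vartriangleleft,h]={\rm Span}\{x^n\,:\,n\geq 0\}$ and $z_{xy}\notin H[y,\vartriangleleft,h]$: condition (4) fails. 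In pseudo-lex, by contrast, $xy<_{\lex}y$ (compare first letters), and (4) holds. The same phenomenon occurs in the paper's own $\mathfrak{S}{\rm Sym}$ computations: $\mathcal{T}_{21}\mathcal{T}_{1}=\mathcal{T}_{1}\mathcal{T}_{21}+\mathcal{T}_{213}$, where $213$ has larger degree than $21$ yet $213\prec 21$. Indeed, no order in which lower degree always comes first can satisfy (4) unless $H$ is commutative, since commutators of the $z_\xi$'s land in strictly higher degree; this is precisely why the paper's Example \ref{cocommutative} must \emph{reverse} the degree comparison in the cocommutative case, and why the general case uses $<_{\lex}$, which is neither degree-increasing nor degree-decreasing. (Your justification of condition (6) survives the correction: within a fixed $\Gamma$-degree no word is a prefix of another, so $<_{\lex}$ restricted to each degree component is a well order, which is the paper's Lemma \ref{lex-restriction}.)

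A smaller point: your definition of $h(\xi)$ as ``the least $t$ with $z_\xi^{\,t}\in H[\xi,\vartriangleleft,h]$'' is circular, since $H[\xi,\vartriangleleft,h]$ already refers to $h$. The paper instead defines $h_I(u)$ purely combinatorially, as the least $n$ such that $u^n$ is the $<_{\lex}$-leading word of a homogeneous element of $I$, and only afterwards proves that this height has property (3); you would need the same device (or a transfinite recursion along $\vartriangleleft$) to make your definition legitimate.
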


\begin{proof}
We postpone the proof to Section \ref{proof-PBW}. Actually, without Parts (2) and (6) it is simply a special case of \cite[Theorem 4.3]{Zhou}. Also, see \cite[Theorem A]{ZSL3} and \cite[Theorem A]{LZ} for the characteristic zero case. However, we will show briefly in Section \ref{proof-PBW} how the desired data are constructed and why they are workable. It is mainly for self-containedness and reader's convenience.
\end{proof}

In the remainder of this section, we show some linear properties that the Adams operators on connected graded Hopf algebras share as consequences of the above two theorems. Some of them have been obtained in \cite{AgLa} by a different method when the base field $k$ is of characteristic zero.

\begin{corollary}
Let $H=\bigoplus_{\gamma\in \Gamma}H_\gamma$ be a connected $\Gamma$-graded Hopf algebra. Then
\begin{enumerate}
	\item $(\Psi_n|_{H_\gamma})_{n\in \mathbb{Z}}$ are simultaneously upper triangularizable for each $\gamma\in \Gamma$.
	\item the eigenvalues of $\Psi_n|_{H_\gamma}$ are powers of $n$ (in $k$) for each   $n\in \mathbb{Z}$ and $\gamma\in \Gamma$.
	\item $S_H|_{H_\gamma}$ is diagonalizable for each $\gamma\in \Gamma$ if and only if $(S_H)^2=\id_H$.
	\item  $\Psi_n$ is composition invertible for each $n\in\mathbb{Z}$ which is not zero in $k$.
\end{enumerate}
\end{corollary}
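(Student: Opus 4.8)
The plan is to route all four assertions through the single ordered basis furnished by Theorem \ref{PBW-basis}. First I would invoke Theorem \ref{PBW-basis} to fix a family $\{z_\xi\}_{\xi\in\Xi}$ of homogeneous elements of $H_+$, a total order $\vartriangleleft$, and a map $h\colon\Xi\to\mathbb{N}_\infty$ satisfying its six conditions, and then observe that these data also satisfy the hypotheses of Theorem \ref{Adams operators}: conditions (2), (3), (6) of Theorem \ref{PBW-basis} are verbatim conditions (1), (2), (4) of Theorem \ref{Adams operators}, while condition (4) of Theorem \ref{PBW-basis} is the special case (scalar $1\in k^\times$) of condition (3) of Theorem \ref{Adams operators}. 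Hence Theorem \ref{Adams operators} applies, and together with condition (1) of Theorem \ref{PBW-basis} it yields, for each $\gamma\in\Gamma$, a basis $\{z_V : V\in\mathcal{P}(\Xi,\vartriangleleft,h),\ |V|=\gamma\}$ of $H_\gamma$ which is totally ordered by $\vartriangleleft_R$ (any two sequences of equal degree being $\vartriangleleft_R$-comparable), such that
\[
\Psi_n(z_V)\in n^{l(V)}\cdot z_V + {\rm Span}\big(\{\, z_W : W\in\mathcal{P}(\Xi,\vartriangleleft,h),\ W\vartriangleleft_R V \,\}\big)
\]
for every $n\in\mathbb{Z}$. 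Since neither the basis nor the order depends on $n$, this is precisely the asserted simultaneous upper triangularizability, proving (1); when $H$ is locally finite these become honest finite upper-triangular matrices.

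For (2) I would read the eigenvalues off this triangular form directly. Given a nonzero eigenvector $v=\sum_V c_V z_V\in H_\gamma$ with eigenvalue $\lambda$, let $V_0$ be the $\vartriangleleft_R$-largest index in its (finite) support. Comparing coefficients of $z_{V_0}$ on the two sides of $\Psi_n(v)=\lambda v$, and using that the displayed formula contributes $n^{l(V_0)} z_{V_0}$ from the term $z_{V_0}$ and only strictly $\vartriangleleft_R$-smaller terms from the others, one obtains $\lambda=n^{l(V_0)}$, a power of $n$ in $k$. This argument is dimension-free and requires no local finiteness.

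For (3) I would specialize to $S_H=\Psi_{-1}$, whose eigenvalues are therefore powers of $-1$, i.e. lie in $\{1,-1\}$. If each $S_H|_{H_\gamma}$ is diagonalizable, then $H_\gamma$ admits a basis of eigenvectors on which $S_H^2$ acts by $\lambda^2=1$, so $S_H^2|_{H_\gamma}=\id$ for every $\gamma$ and thus $S_H^2=\id_H$. Conversely, if $S_H^2=\id_H$ then $S_H|_{H_\gamma}$ is annihilated by $x^2-1=(x-1)(x+1)$, and when these factors are coprime the space splits as $\ker(S_H-\id)\oplus\ker(S_H+\id)$, giving diagonalizability. The delicate point, which I expect to be the real obstacle, is exactly this coprimality: it is automatic when $\mathrm{char}\,k\neq 2$, but in characteristic $2$ one has $-1=1$, the sole eigenvalue is $1$, and diagonalizability forces $S_H=\id_H$, whereas $S_H^2=\id_H$ is strictly weaker (the tensor algebra on primitives already gives $S_H^2=\id_H$ with $S_H|_{H_2}$ non-diagonalizable). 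I would therefore either prove (3) under the standing assumption $\mathrm{char}\,k\neq 2$ or reconcile the characteristic-$2$ case explicitly.

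Finally, for (4) I would use the triangular form once more. When $n$ is nonzero in $k$, every diagonal entry $n^{l(V)}$ is nonzero since a field has no zero divisors; the leading-coefficient argument of (2) then shows $\Psi_n|_{H_\gamma}$ is injective, and hence invertible on each $H_\gamma$. As $\Psi_n$ is a graded linear map (a convolution power of the graded map $\id_H$, with $S_H$ graded for negative $n$), composition invertibility on every $H_\gamma$ gives composition invertibility on all of $H$. When $H$ is locally finite, injectivity already forces bijectivity; in general I would obtain surjectivity by transfinite back-substitution along $\vartriangleleft_R$, which restricts to a well order on each fixed degree because the lengths $l(V)$ are bounded by $N(\gamma)$ and $\vartriangleleft$ well-orders each degree.
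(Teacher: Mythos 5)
Your proposal is correct and takes exactly the route the paper intends: the paper's own proof consists of the single sentence that the corollary is a direct consequence of Theorem \ref{Adams operators} and Theorem \ref{PBW-basis}, and the details you supply — matching condition (2), (3), (4), (6) of Theorem \ref{PBW-basis} to conditions (1), (2), (3), (4) of Theorem \ref{Adams operators}, the leading-coefficient argument for the eigenvalues, and transfinite back-substitution for surjectivity (legitimate because $\vartriangleleft_R$ restricted to sequences of a fixed degree is a well order, by Lemma \ref{well-founded-order} and comparability in equal degree) — are precisely the details the paper leaves unstated.

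Your reservation about part (3) is well founded, and it is a defect of the paper's statement rather than of your argument: the "if" direction genuinely fails in characteristic $2$. Concretely, take $H=k\langle x,y\rangle$ with $x,y$ primitive of degree $1$, a cocommutative connected graded Hopf algebra over a field $k$ of characteristic $2$. Then $(S_H)^2=\id_H$, but $S_H$ interchanges $xy$ and $yx$, so on ${\rm Span}(\{xy,yx\})\subseteq H_2$ one has $(S_H-\id)^2=2(\id-S_H)=0$ with $S_H\neq \id$; thus $S_H|_{H_2}$ is unipotent and not diagonalizable. Hence part (3) as stated requires the characteristic to be different from $2$ (in characteristic $2$ the correct equivalence is: $S_H|_{H_\gamma}$ is diagonalizable for all $\gamma$ if and only if $S_H=\id_H$), and in that case your argument via the split separable polynomial $x^2-1=(x-1)(x+1)$, together with part (2) for the forward direction, is complete.
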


\begin{proof}
It is a direct consequence of Theorem \ref{Adams operators} and Theorem \ref{PBW-basis}.
\end{proof}

Let $H$ be a locally finite connected $\Gamma$-graded Hopf algebra. For each $n\in \mathbb{N}$ and $\gamma\in \Gamma$, let 
\begin{eqnarray*}
	\text{\rm mul}_H(n,\gamma):= \sum_{(d_\alpha)_{\alpha \in \Gamma\backslash\{0\}}} \prod_{\beta : d_\beta\neq 0}\binom{\dim (P_\beta)+d_\beta-1}{d_\beta },
\end{eqnarray*}
where $P=\bigoplus_{\beta\in \Gamma} P_\beta$ is the primitive space of $\gr(H)^*$, and  $(d_\alpha)_{\alpha\in \Gamma\backslash\{0\}}$ runs over all families of numbers with only finitely many nonzero entries such that $$\sum_{\alpha\in \Gamma\backslash\{0\}} d_\alpha =n \quad \text{and} \quad \sum_{\alpha\in \Gamma\backslash\{0\}} d_\alpha ~ \alpha =\gamma.$$ 
Note (Lemma \ref{Hilbert series}) that the  numbers $\text{\rm mul}_H(n,\gamma)$ are determined by the Hilbert series $\hh_H(\textbf{\rm t})$ and
$$\text{\rm mul}_H(n,\gamma)=0 \quad \text{for} \quad n>N(\gamma).$$
We refer to \cite[Subsections 2.1 and 2.2]{AgLa} for discussion and more detail. Recall that $N(\gamma)$ denotes the largest number  such that $\gamma$ can be expressed as $\gamma= \alpha_1+\cdots +\alpha_{N(\gamma)}$ with $\alpha_i\in \Gamma \backslash\{0\}$. 

\begin{lemma}\label{counting basis element}
Assume the base field $k$ is of characteristic zero. Let $H$ be a connected $\Gamma$-graded Hopf algebra which is locally finite. Let $(\{z_\xi\}_{\xi\in \Xi}, \vartriangleleft, h)$ be a triple as in Theorem \ref{Adams operators}. Assume further that $h$ always takes the constant value $\infty$ and $\{z_V\}_{V\in \mathcal{P}(\Xi,\vartriangleleft,h)}$ is a basis of $H$. Then 
\[
\text{\rm mul}_H(n,\gamma)= \#\{~ V\in \mathcal{P}(\Xi,\vartriangleleft,h) ~| ~ l(V)=n, ~|V|=\gamma ~\}.
\] 
\end{lemma}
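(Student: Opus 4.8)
The plan is to reduce the lemma to a multiset count and then match that count, term by term, against the defining expression for $\text{\rm mul}_H(n,\gamma)$, the only nonformal input being an identification of exponents coming from Lemma \ref{Hilbert series}. First I would unpack the hypothesis $h\equiv \infty$: the constraint $p_i < h(\xi_i)$ in the definition of $\mathcal{P}(\Xi,\vartriangleleft,h)$ becomes vacuous, so an element $V\in \mathcal{P}(\Xi,\vartriangleleft,h)$ is precisely an arbitrary $\vartriangleleft$-nondecreasing finite sequence, equivalently a finitely supported multiplicity function $m\colon \Xi \to \mathbb{N}$. Under this identification $l(V)=\sum_{\xi} m(\xi)$ and $|V|=\sum_{\xi} m(\xi)\deg(z_\xi)$. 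For $\alpha\in \Gamma\backslash\{0\}$ I set $\Xi_\alpha:=\{\xi\in\Xi \mid \deg(z_\xi)=\alpha\}$ and $d_\alpha:=\sum_{\xi\in\Xi_\alpha} m(\xi)$, so that the conditions $l(V)=n$ and $|V|=\gamma$ translate exactly into $\sum_\alpha d_\alpha=n$ and $\sum_\alpha d_\alpha\,\alpha=\gamma$.

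Next I would carry out the count. Grouping generators by degree, a multiset $m$ realizing a prescribed family $(d_\alpha)$ amounts to independently choosing, for each $\alpha$ with $d_\alpha\neq 0$, a size-$d_\alpha$ multiset on the set $\Xi_\alpha$, and there are $\binom{|\Xi_\alpha|+d_\alpha-1}{d_\alpha}$ of these. Here local finiteness of $H$ guarantees $|\Xi_\alpha|\le \dim H_\alpha<\infty$, because the length-one sequences $z_\xi$ with $\xi\in\Xi_\alpha$ are members of the basis $\{z_V\}$ and all lie in $H_\alpha$, hence are linearly independent. Summing over all admissible $(d_\alpha)$ gives
\[
\#\{V\in \mathcal{P}(\Xi,\vartriangleleft,h)\mid l(V)=n,\ |V|=\gamma\}=\sum_{(d_\alpha)}\ \prod_{\alpha:\,d_\alpha\neq 0}\binom{|\Xi_\alpha|+d_\alpha-1}{d_\alpha},
\]
the sum ranging over finitely supported $(d_\alpha)_{\alpha\in\Gamma\backslash\{0\}}$ with $\sum_\alpha d_\alpha=n$ and $\sum_\alpha d_\alpha\,\alpha=\gamma$. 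This is precisely the defining expression for $\text{\rm mul}_H(n,\gamma)$ with $\dim P_\alpha$ replaced by $|\Xi_\alpha|$, so everything reduces to proving $|\Xi_\alpha|=\dim P_\alpha$ for every $\alpha\neq 0$.

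Finally, I would establish $|\Xi_\alpha|=\dim P_\alpha$ by comparing Hilbert series, which is where the one genuine step lies. Running the same multiset bookkeeping on the basis $\{z_V\}$ yields
\[
\hh_H(\textbf{\rm t})=\prod_{\xi\in\Xi}\big(1-\textbf{\rm t}^{\deg(z_\xi)}\big)^{-1}=\prod_{\alpha\in\Gamma\backslash\{0\}}\big(1-\textbf{\rm t}^{\alpha}\big)^{-|\Xi_\alpha|},
\]
a well-defined element of $\mathbb{Z}[[\Gamma]]$ since each $\gamma$ has only finitely many decompositions in the free abelian monoid $\Gamma$. Comparing with the expression $\hh_H(\textbf{\rm t})=\prod_{\alpha\neq 0}(1-\textbf{\rm t}^{\alpha})^{-\dim P_\alpha}$ supplied by Lemma \ref{Hilbert series}, it remains only to invoke uniqueness of the exponents in such a product: fixing a well-order on $\Gamma\backslash\{0\}$ compatible with addition and inducting along it, once all exponents attached to strictly smaller degrees are known, the coefficient of $\textbf{\rm t}^{\alpha}$ in the product pins down the exponent of $(1-\textbf{\rm t}^{\alpha})^{-1}$, every such coefficient being a finite sum by local finiteness. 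This forces $|\Xi_\alpha|=\dim P_\alpha$ for all $\alpha\neq 0$, and substituting back into the displayed count completes the proof. Thus the combinatorial enumeration is routine once $h\equiv\infty$ is exploited, the passage to $\gr(H)^*$ is entirely delegated to Lemma \ref{Hilbert series}, and the only obstacle worth isolating is the formal uniqueness of the exponents in the Hilbert-series factorization.
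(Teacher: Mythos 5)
Your proof is correct and takes essentially the same route as the paper's: group the $\vartriangleleft$-nondecreasing sequences as multisets by degree to get both the factorization $\hh_H(\textbf{\rm t})=\prod_{\alpha\neq 0}(1-\textbf{\rm t}^\alpha)^{-p_\alpha}$ (with $p_\alpha=|\Xi_\alpha|$) and the binomial count, then identify $p_\alpha=\dim P_\alpha$ via Lemma \ref{Hilbert series}. The only difference is that you make explicit the uniqueness of the exponents in the Hilbert-series factorization (via induction along an addition-compatible well order), a step the paper's proof leaves implicit when it asserts $p_\beta=\dim(P_\beta)$.
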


\begin{proof}
For each $\gamma\in \Gamma$, let $p_\gamma$ be the number of $\xi\in \Xi$ such that $\deg(z_\xi)=\gamma$. Then by a standard combinatorial argument, one has 
\[
\hh_H(\textbf{\rm t}) = \prod_{\gamma\in \Gamma\backslash\{0\}} (1 -\textbf{\rm t}^\gamma)^{-p_\gamma}
\] 
and  
\[
\#\{~ V\in \mathcal{P}(\Xi,\vartriangleleft,h) ~| ~ l(V)=n, ~|V|=\gamma ~\}= \sum_{(d_\alpha)_{\alpha \in \Gamma\backslash\{0\}}} \prod_{\beta : d_\beta\neq 0}\binom{p_\beta + d_\beta-1}{d_\beta }
\]
for each $n\in \mathbb{N}$ and $\gamma\in \Gamma$, where $(d_\alpha)_{\alpha\in \Gamma\backslash\{0\}}$ runs over all families of numbers with only finitely many nonzero entries such that $\sum_{\alpha\in \Gamma\backslash\{0\}} d_\alpha =n$ and $\sum_{\alpha\in \Gamma\backslash\{0\}} d_\alpha ~ \alpha =\gamma$. By  Lemma \ref{Hilbert series}, one obtains $p_\beta=\dim(P_\beta)$ for each $\beta\in \Gamma$. The desired equality follows immediately.
\end{proof}

\begin{corollary}\label{characteristic polynomial}
Assume that the base field $k$ is of characteristic zero. Let $H$ be a connected $\Gamma$-graded Hopf algebra which is locally finite. Then for each $n\in \mathbb{Z}$ and $\gamma\in \Gamma$, the characteristic polynomial  of the restriction $\Psi_n|_{H_\gamma}$ of the $n$-th Adams operator to $H_\gamma$ is
\[
\mathcal{X}(\Psi_n|_{H_\gamma}) (x) =\prod_{s=0}^{N(\gamma)} (x-n^s)^{\text{\rm mul}_H(s,\gamma)}.
\]
\end{corollary}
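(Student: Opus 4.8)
The plan is to assemble the two main theorems together with the counting Lemma \ref{counting basis element}. First I would invoke Theorem \ref{PBW-basis} to fix a triple $(\{z_\xi\}_{\xi\in\Xi}, \vartriangleleft, h)$ for $H$. Since $k$ is of characteristic zero, part (5) of that theorem guarantees $h(\xi)=\infty$ for every $\xi$, and part (1) guarantees that $\{z_V\}_{V\in\mathcal{P}(\Xi,\vartriangleleft,h)}$ is a basis of $H$. Parts (2), (4) and (6) of Theorem \ref{PBW-basis} are exactly conditions (1), (3) (taking the unit scalar $1\in k^{\times}$) and (4) of Theorem \ref{Adams operators}, while condition (2) of Theorem \ref{Adams operators} is vacuous because $h\equiv\infty$; hence the triple meets all the hypotheses of Theorem \ref{Adams operators}.

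Next I would pass to a fixed homogeneous component $H_\gamma$. Because $H$ is locally finite, $H_\gamma$ is finite-dimensional, so the part of the basis lying in it, namely $\mathcal{B}_\gamma := \{\, z_V : V\in\mathcal{P}(\Xi,\vartriangleleft,h),\ |V|=\gamma \,\}$, is a finite set. On the subset of $\mathcal{P}(\Xi,\vartriangleleft,h)$ consisting of sequences of degree $\gamma$, the relation $\vartriangleleft_R$ is a total order (any two sequences of the same degree are $\vartriangleleft_R$-comparable, as recorded just after the definition of $\vartriangleleft_R$). I would therefore list $\mathcal{B}_\gamma$ increasingly with respect to $\vartriangleleft_R$ and read off the matrix of $\Psi_n|_{H_\gamma}$ in this ordered basis. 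By Theorem \ref{Adams operators}, for each $V$ with $|V|=\gamma$ one has $\Psi_n(z_V)\in n^{l(V)} z_V + {\rm Span}(\{\, z_W : W\vartriangleleft_R V \,\})$; since every $z_W$ occurring on the right has $|W|=|V|=\gamma$ and precedes $z_V$, the matrix is upper triangular with diagonal entries $n^{l(V)}$.

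It then follows that
\[
\mathcal{X}(\Psi_n|_{H_\gamma})(x) = \prod_{V\,:\,|V|=\gamma} \big(x - n^{l(V)}\big).
\]
I would finish by grouping the factors according to the common value $s = l(V)$. By the definition of $N(\gamma)$ one has $l(V)\le N(\gamma)$ for every $V$ with $|V|=\gamma$, so $s$ runs over $0,1,\ldots,N(\gamma)$; and Lemma \ref{counting basis element} identifies the number of such $V$ with $l(V)=s$ as exactly $\text{\rm mul}_H(s,\gamma)$. This yields $\mathcal{X}(\Psi_n|_{H_\gamma})(x) = \prod_{s=0}^{N(\gamma)} (x-n^s)^{\text{\rm mul}_H(s,\gamma)}$, as claimed.

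I do not expect a serious obstacle, since the substance is carried by Theorems \ref{Adams operators} and \ref{PBW-basis} and by the counting Lemma \ref{counting basis element}. The one point requiring care is the passage from \emph{upper triangularizable} to a literal upper triangular matrix: this needs $\vartriangleleft_R$ to restrict to a genuine total order on the finite degree-$\gamma$ basis, which holds precisely because all elements of $\mathcal{B}_\gamma$ share the degree $\gamma$ (so no such sequence can be a proper $\vartriangleleft_R$-suffix of another of the same degree, as the $z_\xi$ lie in $H_+$) and because condition (6) makes $\vartriangleleft$ a well order, hence total, on each fixed-degree set of generators.
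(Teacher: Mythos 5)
Your proposal is correct and follows exactly the paper's intended route: the paper's proof of Corollary \ref{characteristic polynomial} is a one-line citation of Theorem \ref{Adams operators}, Theorem \ref{PBW-basis} and Lemma \ref{counting basis element}, and your argument supplies precisely the details behind that citation (including the correct observation that condition (2) of Theorem \ref{Adams operators} is vacuous when $h\equiv\infty$ and that $\vartriangleleft_R$ is total on same-degree sequences because the $z_\xi$ lie in $H_+$).
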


\begin{proof}
It is a direct consequence of Theorem \ref{Adams operators}, Theorem \ref{PBW-basis} and Lemma \ref{counting basis element}.
\end{proof}

\begin{remark}
The above corollary recovers \cite[Theorem 3]{AgLa}. It is the main result of \cite{AgLa} and the approach there is different from ours. We refer to loc. cit. for many interesting consequences of this result and for computations as well as concrete descriptions on various examples.
\end{remark}

\section{Preparations for the proof of Theorem \ref{Adams operators}}
\label{Preparation-Adams-operators}

In this section, we make some preparations to prove Theorem \ref{Adams operators}, which is one of the main results of this paper. Among others, some auxiliary technical  formulas are established. 

Throughout this section, we fix a  connected $\Gamma$-graded algebra $H=\bigoplus_{\gamma\in \Gamma} H_\gamma$, where $\Gamma$ is a nontrivial free abelian monoid. We also fix a family $\{z_\xi\}_{\xi\in \Xi}$ of homogeneous elements of $H_+$, a total order $\vartriangleleft$ on $\Xi$ and a map $h:\Xi\to \mathbb{N}_\infty$  with $h(\xi)\geq 2$ for each $\xi\in \Xi$. 

In the sequel, the notation and conventions of the previous section are retained.

Recall that a partial order $<$ on a set $A$ is said to be \emph{well-founded} if  every nonempty subset of $A$ has a minimal element with respect to $<$, or equivalently, there is no infinitely decreasing sequence with respect to $<$. Note that a well order on $A$ is simply a well-founded total order on $A$.

\begin{lemma}\label{well-founded-order}
Assume that the restriction of $\vartriangleleft$ to the set $\{~ \xi\in \Xi~|~ \deg(z_\xi)=\gamma ~\}$ is well-founded for each $\gamma\in \Gamma$. Then the partial orders $\vartriangleleft_L$ and $\vartriangleleft_R$ on $\mathcal{P}(\Xi)$ are both well-founded.
\end{lemma}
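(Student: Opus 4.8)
The plan is to argue by contradiction via infinite descending chains, and since the definitions of $\vartriangleleft_L$ and $\vartriangleleft_R$ are mirror images of one another (a left versus a right lexicographic refinement of the same degree comparison), I would carry out the proof in full for $\vartriangleleft_L$ and obtain the case of $\vartriangleleft_R$ by the symmetric argument. So suppose there were an infinite strictly $\vartriangleleft_L$-descending chain $V^{(1)} \vartriangleright_L V^{(2)} \vartriangleright_L \cdots$ in $\mathcal{P}(\Xi)$. The first step is to separate the two ingredients hidden in $\vartriangleleft_L$: a coarse comparison by degree and a fine lexicographic comparison among sequences of equal degree. The coarse comparison is governed by the natural order on $\Gamma$, where $\alpha < \beta$ means $\beta = \alpha + \gamma$ for some $\gamma \neq 0$; this order is well-founded because $\alpha < \beta$ forces $N(\beta) \geq N(\alpha) + N(\gamma) > N(\alpha)$, so $N \colon \Gamma \to \mathbb{N}$ strictly increases along it. Along the chain the degrees $|V^{(j)}|$ are non-increasing in this order, hence eventually constant (a non-stabilizing non-increasing sequence would contain an infinite strictly descending subsequence). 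Passing to the tail, I reduce to an infinite $\vartriangleleft_L$-descending chain all of whose terms share a single degree $\gamma$.

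The heart of the matter is this fixed-degree case, and here lies the main obstacle: $\vartriangleleft$ is only assumed well-founded on each degree class $\Xi_\beta := \{\,\xi \mid \deg(z_\xi) = \beta\,\}$, whereas a lexicographic branching $\mu_l \vartriangleleft \nu_l$ may a priori compare components of different degrees, where $\vartriangleleft$ need not be well-founded. The device I would use to circumvent this is a reduction to a situation in which each position carries a fixed degree. Since every $z_\xi \in H_+$ has nonzero degree, any $V$ with $|V| = \gamma$ has length at most $N(\gamma)$, and its degree composition $(\deg z_{\nu_1}, \ldots, \deg z_{\nu_s})$ is a tuple of nonzero parts summing to $\gamma$. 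As the down-set $\{\delta \mid \delta \leq \gamma\}$ is finite (there being only finitely many factorizations of $\gamma$), there are only finitely many such compositions. By the pigeonhole principle an infinite subchain shares one composition $(d_1, \ldots, d_s)$; a subsequence of a strictly descending chain is again strictly descending (by transitivity of $\vartriangleleft_L$), so this subchain is still infinite and $\vartriangleleft_L$-descending, now of constant length $s$ with $i$-th entry lying in $\Xi_{d_i}$ for every term.

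On this subchain each coordinate $i$ ranges within the single class $\Xi_{d_i}$, on which $\vartriangleleft$ is a well order by hypothesis, and the restriction of $\vartriangleleft_L$ to sequences of this fixed length and degree composition is precisely the left-lexicographic order on the finite product $\Xi_{d_1} \times \cdots \times \Xi_{d_s}$ of well-ordered sets. Since a finite lexicographic product of well orders is again a well order, this product admits no infinite descending chain, giving the desired contradiction. I expect the pigeonhole reduction to a fixed degree composition to be the crux of the proof: it is exactly what converts the globally ill-behaved order $\vartriangleleft$ into a finite product of the locally well-behaved orders $\vartriangleleft|_{\Xi_{d_i}}$, after which the classical well-ordering fact closes the argument. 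The case of $\vartriangleleft_R$ is handled identically, replacing the left-lexicographic comparison by the right-lexicographic one throughout.
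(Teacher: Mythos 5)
Your proof is correct, but it follows a genuinely different route from the paper's. The paper proves the stabilization of nonincreasing chains by Noetherian induction on a degree bound $\gamma$ with respect to the divisibility order on $\Gamma$: in a chain $V_1\trianglerighteq_L V_2\trianglerighteq_L\cdots$ of degree bounded by $\gamma$, the degrees stabilize (there are finitely many elements below $\gamma$), then the first components form a nonincreasing chain in $\Xi$ which stabilizes, and the induction hypothesis is applied to the truncated sequences, whose degree bound $\gamma-\deg(z_{\xi_q})$ is strictly smaller. You instead fix the degree by passing to a tail and then make a pigeonhole extraction on degree compositions, reducing to the left-lexicographic order on a finite product $\Xi_{d_1}\times\cdots\times\Xi_{d_s}$ of well-founded (indeed well-ordered, since $\vartriangleleft$ is total) classes, which is well-founded by the classical fact about finite lexicographic products; the $\vartriangleleft_R$ case is the mirror image in both treatments. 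What your route buys: it isolates and explicitly resolves the one delicate point --- a lexicographic branching $\mu_l\vartriangleleft\nu_l$ may compare entries of \emph{different} degrees, where no well-foundedness is assumed --- which the paper's proof passes over silently: its step ``the assumption of $\vartriangleleft$ implies $\xi_q=\xi_{q+1}=\cdots$'' tacitly relies on the first components ranging over only finitely many degree classes, i.e., the same pigeonhole idea in disguise. You also give an explicit reason (superadditivity of $N$ along the divisibility order) why the natural order on $\Gamma$ is well-founded, which the paper merely asserts. What the paper's route buys: a shorter, induction-driven argument that avoids repeated subsequence extractions and directly yields the stabilization form of the statement used elsewhere.
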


\begin{proof}
We show $\vartriangleleft_L$ is well-founded. The proof for that of $\vartriangleleft_R$ is similar. Firstly, we introduce a partial order $<$ on $\Gamma$ as follows: for $\alpha, \beta\in \Gamma$, we write  $\alpha <\beta$ iff there exists a nonzero element $\gamma \in \Gamma$ such that $\alpha+ \gamma =\beta$. Since $\Gamma$ is a free abelian monoid, it is a well-founded partial order on $\Gamma$. To see the result, it suffices to see every nonincreasing sequence  $V_1 \trianglerighteq_L V_2 \trianglerighteq_L \cdots$ of elements in $\mathcal{P}(\Xi)$ of  degree  bounded by some $\gamma$ stabilizes eventually. We prove it by induction on $\gamma$ with respect to $<$.
	
For the case that $\gamma = 0$, the desired result clearly holds. Now we assume $\gamma >0$. Let $\xi_i\in \Xi$ be the first component of $V_i$, and let  $W_i$ be the remaining of $V_i$ by eliminating the first component. Note that there are only finitely many elements of $\Gamma$ that $\leq \gamma$, one has $$|V_p| =|V_{p+1}| =\cdots$$ and hence $\xi_p\trianglerighteq \xi_{p+1}\trianglerighteq \cdots$ for some $p\geq 1$. 
The assumption of $\vartriangleleft$ implies that  $\xi_q=\xi_{q+1}=\cdots$ and hence $W_q \trianglerighteq_L W_{q+1} \trianglerighteq_L \cdots$  for some $q\geq p$. But $W_q, W_{q+1},\ldots$ have  degrees bounded by $\gamma- \deg(z_{\xi_q})$, which is less than $\gamma$ with respect to $<$, the result follows by the induction hypothesis.	
\end{proof}

\begin{lemma}\label{rearragement}
Assume that the following conditions hold:
	\begin{enumerate}
		\item $z_\xi^{h(\xi)} \in H[\xi, \vartriangleleft,h]$ for each $\xi\in \Xi$ with $h(\xi) <\infty$,
		\item $z_\nu z_\mu \in a_{\nu,\mu}\cdot z_\mu z_\nu + H[\nu, \vartriangleleft,h]$ for each pair $\mu, \nu\in \Xi$ with $\mu \vartriangleleft \nu$, where $a_{\nu,\mu}\in k^{\times}$.
		\item the restriction of $\vartriangleleft$ to the set $\{~ \xi\in \Xi~|~ \deg(z_\xi)=\gamma ~\}$ is a well order for each $\gamma\in \Gamma$.
	\end{enumerate} 
	Then for each element  $V=(\nu_1,\ldots, \nu_s)\in \mathcal{P}(\Xi)$, one has
	\begin{eqnarray*}
		z_V \in 
		\left\{
		\begin{array}{ll} 
			~ {\rm Span} \big(\{~ z_W~|~W  \in \mathcal{P}(\Xi,\vartriangleleft,h), ~ W \vartriangleleft_R \Pi(V) ~\} \big), & \text{\rm if } ~ ~  \Pi(V)  \not\in \mathcal{P}(\Xi,\vartriangleleft,h),\\
			~  (\prod_{1\leq i<j\leq s \atop \nu_{i} \vartriangleright \nu_j} a_{\nu_i,\nu_j}) \cdot  z_{\Pi(V)} + {\rm Span} \big(\{~ z_W~|~W  \in \mathcal{P}(\Xi,\vartriangleleft,h), ~ W \vartriangleleft_R \Pi(V) ~\}\big ), & \text{\rm if } ~ ~ \Pi(V)  \in \mathcal{P}(\Xi,\vartriangleleft,h), 
		\end{array}
		\right.
	\end{eqnarray*}
	where $\Pi(V)=(\omega_1,\ldots, \omega_s)$ is the unique permutation of $V$ such that  $\omega_1 \trianglelefteq \cdots \trianglelefteq \omega_s$. 
\end{lemma}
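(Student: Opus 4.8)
The plan is to straighten an arbitrary product $z_V$ into the sorted word $z_{\Pi(V)}$ by repeatedly applying the two structural relations (1) and (2), while keeping all error terms under control via a well-founded induction. Since both relations preserve the total degree $|V|$, I may work inside the fixed homogeneous component of degree $|V|$; on it $\vartriangleleft_R$ is a well order, being well-founded by Lemma \ref{well-founded-order} and total on each degree fibre (any two equal-degree words are $\vartriangleleft_R$-comparable, and since $z_\xi\in H_+$ forces every letter to have nonzero degree, no equal-degree word can be a proper right-suffix of another). The induction runs on two parameters: primarily on $\Pi(V)$ with respect to $\vartriangleleft_R$, and secondarily on the number $\mathrm{inv}(V)$ of pairs $i<j$ with $\nu_i\vartriangleright\nu_j$. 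The single combinatorial fact I will invoke throughout is the following comparison criterion: for two \emph{sorted} words $P,Q$ of equal degree, $P\vartriangleleft_R Q$ holds iff at the largest letter $\theta$ where their multiplicities differ, $P$ has strictly fewer copies of $\theta$. This is just the definition of $\vartriangleleft_R$ read from the right, where sorted words list their letters nonincreasingly.

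For the secondary base case $\mathrm{inv}(V)=0$, the word $V$ is already sorted, so $V=\Pi(V)$. If $V\in\mathcal{P}(\Xi,\vartriangleleft,h)$ then the inversion product is empty and $z_V=1\cdot z_{\Pi(V)}$, as required. Otherwise some letter $\xi$ occurs with multiplicity at least $h(\xi)$; I factor out a block $z_\xi^{h(\xi)}$ and invoke (1) to write $z_V=\sum_W c_W\,z_W$, where each $W$ is obtained by substituting, for that block, a sorted word $U\in\mathcal{P}(\Xi,\vartriangleleft,h)$ all of whose letters are $\vartriangleleft\xi$. By the comparison criterion the largest letter whose multiplicity changes is $\xi$, and it drops, so $\Pi(W)\vartriangleleft_R\Pi(V)$ for each such $W$. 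The primary induction hypothesis then places every $z_W$ inside $\mathrm{Span}(\{z_{W'}\mid W'\vartriangleleft_R\Pi(V)\})$, which is exactly the conclusion in the branch $\Pi(V)\notin\mathcal{P}(\Xi,\vartriangleleft,h)$.

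For the secondary step $\mathrm{inv}(V)>0$, I choose an adjacent descent, i.e.\ positions $k,k+1$ with $\mu:=\nu_{k+1}\vartriangleleft\nu_k=:\nu$, and apply (2) to the factor $z_\nu z_\mu$ inside $z_V$. This gives $z_V=a_{\nu,\mu}\,z_{V'}+\sum_W c_W\,z_W$, where $V'$ is $V$ with these two entries transposed and each error word $W$ substitutes a sorted $U$ (all letters $\vartriangleleft\nu$) for the pair. Since $\Pi(V')=\Pi(V)$ and $\mathrm{inv}(V')=\mathrm{inv}(V)-1$, the secondary hypothesis applies to $z_{V'}$; and since the largest letter whose multiplicity changes from $V$ to $W$ is $\nu$ (dropping by one), the comparison criterion gives $\Pi(W)\vartriangleleft_R\Pi(V)$, so the primary hypothesis handles each $z_W$, landing it below $\Pi(W)$ and hence below $\Pi(V)$. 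Combining, $z_V$ lies in $a_{\nu,\mu}\big(\prod_{\mathrm{inv}(V')}a\big)\,z_{\Pi(V)}$ modulo $\mathrm{Span}(\{z_{W'}\mid W'\vartriangleleft_R\Pi(V)\})$. A direct check shows that an adjacent transposition of a descent multiplies the inversion product by exactly $a_{\nu,\mu}$: for each outside position $r$, the two pairs it forms with $\{k,k+1\}$ contribute the same factors before and after the swap, and only the pair $(k,k+1)$ itself changes status. Hence $a_{\nu,\mu}\prod_{\mathrm{inv}(V')}a=\prod_{\mathrm{inv}(V)}a$, closing the step.

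The heart of the argument, and the step I expect to demand the most care, is verifying that \emph{every} error term is strictly smaller in the primary order, namely $\Pi(W)\vartriangleleft_R\Pi(V)$. This is precisely where the support conditions built into $H[\xi,\vartriangleleft,h]$ — that the substituted sorted word has all its letters strictly below the pivot $\xi$ (resp.\ $\nu$) — feed into the comparison criterion; without strictness here the primary parameter would fail to decrease and the induction would not terminate. A secondary point requiring attention is the coefficient bookkeeping: one must confirm that the adjacent transpositions introduce the factors $a_{\nu_i,\nu_j}$ exactly once per inversion, with no spurious contribution from the common outside positions, so that the diagonal coefficient accumulates precisely to $\prod_{i<j,\ \nu_i\vartriangleright\nu_j} a_{\nu_i,\nu_j}$.
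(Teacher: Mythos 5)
Your proof is correct, and its engine is the same as the paper's: collapse an illegal power using hypothesis (1), swap an adjacent descent using hypothesis (2), and check that every error term so produced has nondecreasing rearrangement strictly $\vartriangleleft_R$-below $\Pi(V)$. Where you genuinely diverge is the induction scheme. The paper runs a single well-founded induction on the word $V$ itself with respect to the \emph{left} order $\vartriangleleft_L$ (the proof of this lemma is essentially where $\vartriangleleft_L$ earns its keep, and is why Lemma \ref{well-founded-order} treats both orders): each rewriting move produces words that are strictly $\vartriangleleft_L$-smaller, so one induction hypothesis covers the swapped word, the power-collapsed words, and the error words uniformly, and the coefficient $\prod a_{\nu_i,\nu_j}$ accumulates automatically through the hypothesis applied to the swapped word. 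You instead induct lexicographically on the pair $\bigl(\Pi(V),\mathrm{inv}(V)\bigr)$, with $\vartriangleleft_R$ on sorted words as primary parameter; this forces you to (i) prove the multiplicity comparison criterion for $\vartriangleleft_R$ on equal-degree sorted words --- a fact the paper uses at the corresponding step but dismisses as ``easy to check'' --- and (ii) keep the inversion count as a secondary parameter, since descent swaps leave $\Pi(V)$ fixed, together with an explicit verification that an adjacent transposition changes the inversion product by exactly one factor $a_{\nu,\mu}$. What your route buys: $\vartriangleleft_L$ is never needed, the error-term estimates become transparent, and the diagonal coefficient is tracked explicitly rather than absorbed into the induction. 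What the paper's route buys: one induction parameter instead of two and no need to formalize the comparison criterion. One presentational remark: in your inductive step you only spell out the branch $\Pi(V)\in\mathcal{P}(\Xi,\vartriangleleft,h)$; the other branch is immediate (every term lands in the span), but it should be said. Both arguments rest on Lemma \ref{well-founded-order} for termination, and both are complete.
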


\begin{proof}
We proceed by induction on $V=(\nu_1,\ldots, \nu_s)$ with respect to  $\vartriangleleft_L$. If $V$ is minimal, that is $V$ is the empty sequence, then the result holds by convention. Next we assume $V$ is not minimal.
	
First consider the case that $V$ is  nondecreasing with respect to $\vartriangleleft$. If $V\in \mathcal{P}(\Xi,\vartriangleleft,h)$, then the desired formula clearly holds. Otherwise, suppose $V\not \in \mathcal{P}(\Xi,\vartriangleleft,h)$ and write it in the form $$V= (\overbrace{\xi_{1},\ldots, \xi_{1}}^{p_{1}}, \ldots,  \overbrace{\xi_{r},\ldots, \xi_{r}}^{p_{r}}), \quad p_i\geq 1, \ \  \xi_1 \vartriangleleft  \cdots \vartriangleleft \xi_r.$$  By the assumption, one may fix an integer $i$ such that $p_i\geq h(\xi_i)$, and then one has that 
	\begin{align}
		z_V  \ \  = \ \  \sum_{U}a_U\cdot z_{\xi_1}^{p_1} \cdots z_{\xi_{i-1}}^{p_{i-1}} (z_U) z_{\xi_i}^{p_i - h(\xi_i)} z_{\xi_{i+1}}^{p_{i+1}} \cdots z_{\xi_r}^{p_r} \tag{\dag} \label{dag}
	\end{align}
	for some $a_U\in k$, where $U=(\mu_1,\ldots, \mu_t)$ runs over elements in $\mathcal{P}(\Xi,\vartriangleleft,h)$ with  $\mu_1,\ldots, \mu_t \vartriangleleft \xi_i$ and  $|U| = \deg(z_{\xi_i})\cdot h(\xi_i)$. It is easy to check that the finite sequences
	\[
	( {\small \overbrace{\xi_{1},\ldots, \xi_{1}}^{p_{1}}}, \ldots, {\small \overbrace{\xi_{i-1},\ldots, \xi_{i-1}}^{p_{i-1}}}, \mu_1,\ldots, \mu_t,  {\small \overbrace{\xi_{i},\ldots, \xi_{i}}^{p_i-h(\xi_i)}},  {\small \overbrace{\xi_{i+1},\ldots, \xi_{i+1}}^{p_{i+1}}},\ldots, {\small \overbrace{\xi_{r},\ldots, \xi_{r}}^{p_{r}}}) ~ \in ~ \mathcal{P}(\Xi)
	\]
	appearing in (\ref{dag}) are all smaller than $V$ with respect to $\vartriangleleft_L$ and the nondecreasing rearrangement of them  are all smaller than $\Pi(V)$ with respect to $\vartriangleleft_R$. The desired formula follows by the induction hypothesis.
	
	Now  consider the case that $V$ is not nondecreasing. Assume $\nu_{i+1} \vartriangleleft \nu_i$ for some $i$. Then 
	\begin{align}
		z_V \ \  = \ \  a_{\nu_{i},\nu_{i+1}}\cdot \big(z_{\nu_1}\cdots z_{\nu_{i-1}} (z_{\nu_{i+1}} z_{\nu_{i}}) z_{\nu_{i+2}}\cdots z_{\nu_{s}}\big)
		+ \sum_{U} a_U \cdot z_{\nu_1}\cdots z_{\nu_{i-1}} (z_U) z_{\nu_{i+2}}\cdots z_{\nu_{s}}, \tag{\ddag} \label{ddag}
	\end{align}
	for some $a_U\in k$, where $U=(\mu_1,\ldots, \mu_t)$ runs over elements in $\mathcal{P}(\Xi,\vartriangleleft,h)$ with  $\mu_1,\ldots, \mu_t \vartriangleleft \nu_i$ and  $|U| = \deg(z_{\nu_i})+\deg(z_{\nu_{i+1}})$.  It is easy to check that the finite sequences
	\begin{eqnarray*}
		(\nu_1,\ldots, \nu_{i-1}, \nu_{i+1}, \nu_i, \nu_{i+2},\ldots, \nu_s), \quad  	(\nu_1,\ldots, \nu_{i-1}, \mu_1, \ldots, \mu_t, \nu_{i+2},\ldots, \nu_s) ~~~ \in ~~~ \mathcal{P}(\Xi)
	\end{eqnarray*} 
	appearing in (\ref{ddag}) are all smaller than $V=(\nu_1,\ldots, \nu_s)$ with respect to $\vartriangleleft_L$. In addition, $\Pi(V)$ is also the nondecreasing rearrangement of $(\nu_1,\ldots, \nu_{i-1}, \nu_{i+1}, \nu_i, \nu_{i+2},\ldots, \nu_s)$, and it is bigger with respect to $\vartriangleleft_R$ than that of $(\nu_1,\ldots, \nu_{i-1}, \mu_1, \ldots, \mu_t, \nu_{i+2},\ldots, \nu_s)$, the other sequences  appearing in (\ref{ddag}). The desired formula then follows immediately by the induction hypothesis.
\end{proof}

For finite sequences $V, ~ W\in \mathcal{P}(\Xi)$, we write $W|V$ to mean that $W$ is a subsequence of $V$. It means that if $V=(\nu_1,\ldots, \nu_n)$ then $W=(\nu_{i_1},\ldots, \nu_{i_s})$ for some integers  $1\leq i_1<\cdots <i_s\leq n$. If $V= (\overbrace{\xi_{1},\ldots, \xi_{1}}^{p_{1}}, \ldots,  \overbrace{\xi_{r},\ldots, \xi_{r}}^{p_{r}})$ with $\xi_1,\ldots, \xi_r$ mutually distinct, then its subsequences are necessarily of the form $W= (\overbrace{\xi_{1},\ldots, \xi_{1}}^{q_{1}}, \ldots,  \overbrace{\xi_{r},\ldots, \xi_{r}}^{q_{r}})$ with $q_i\leq p_i$. In this case, we write 
\[
\binom{V}{W}:= \binom{p_1}{q_1}\cdots \binom{p_r}{q_r}
\]
and 
\[
V/W:= (\overbrace{\xi_{1},\ldots, \xi_{1}}^{p_1-q_{1}}, \ldots,  \overbrace{\xi_{r},\ldots, \xi_{r}}^{p_r-q_{r}}). 
\]
  
The formula in the next result is the key observation of this paper. 

\begin{lemma}\label{comultiplicatoin-expression}
Assume that the conditions of Lemma \ref{rearragement} hold for a connected $\Gamma$-graded algebra $H$. Let $\Delta:H\to H\otimes H$ be a homomorphism of $\Gamma$-graded algebras such that $$\Delta(z_\xi) \in 1\otimes z_\xi +z_\xi\otimes 1 +H[\xi, \vartriangleleft,h]_+\otimes H[\xi, \vartriangleleft,h]_+, \quad \xi\in \Xi.$$ Then  for each nonempty finite sequence $V\in \mathcal{P}(\Xi,\vartriangleleft,h)$ one has
	$$\Delta(z_V)\in \sum_{W|V} \binom{V}{W} ~ z_W\otimes z_{V/W} + {\rm Span} \big(\{~ z_{M}\otimes z_N~|~ M, N  \in \mathcal{P}(\Xi,\vartriangleleft,h)\backslash\{\emptyset\}, ~ \Pi(MN) \vartriangleleft_R V ~\}\big ), $$
	where $\Pi$ is the rearrange map given in Lemma \ref{rearragement}.
\end{lemma}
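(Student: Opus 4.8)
The plan is to exploit that $\Delta$ is a homomorphism of $\Gamma$-graded algebras. Writing the nondecreasing sequence $V\in\mathcal{P}(\Xi,\vartriangleleft,h)$ as $V=(\xi_1^{p_1},\ldots,\xi_r^{p_r})$ with $\xi_1\vartriangleleft\cdots\vartriangleleft\xi_r$ and $p_i<h(\xi_i)$, I would start from
\[
\Delta(z_V)=\prod_{i=1}^{r}\Delta(z_{\xi_i})^{p_i},\qquad \Delta(z_{\xi_i})=1\otimes z_{\xi_i}+z_{\xi_i}\otimes 1+R_i,
\]
where $R_i\in H[\xi_i,\vartriangleleft,h]_+\otimes H[\xi_i,\vartriangleleft,h]_+$ by hypothesis. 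Expanding over the three choices at each of the $p_1+\cdots+p_r$ tensor factors splits $\Delta(z_V)$ into a \emph{main part}, in which every factor contributes $1\otimes z_{\xi_i}$ or $z_{\xi_i}\otimes 1$, and an \emph{error part}, in which at least one factor contributes a summand of some $R_i$. I expect the main part to reproduce exactly $\sum_{W|V}\binom{V}{W}z_W\otimes z_{V/W}$ and the error part to lie in the displayed remainder span.

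For the main part, since $z_{\xi_i}\otimes 1$ and $1\otimes z_{\xi_i}$ commute in $H\otimes H$, the binomial theorem gives $(1\otimes z_{\xi_i}+z_{\xi_i}\otimes 1)^{p_i}=\sum_{a_i=0}^{p_i}\binom{p_i}{a_i}z_{\xi_i}^{a_i}\otimes z_{\xi_i}^{p_i-a_i}$; multiplying these componentwise over $i=1,\ldots,r$ and using $\xi_1\vartriangleleft\cdots\vartriangleleft\xi_r$, each resulting left (resp.\ right) factor is already the sorted word $z_W$ (resp.\ $z_{V/W}$) with $W=(\xi_1^{a_1},\ldots,\xi_r^{a_r})$, and the coefficient is $\prod_i\binom{p_i}{a_i}=\binom{V}{W}$. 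As $a_i\leq p_i<h(\xi_i)$, both $W$ and $V/W$ lie in $\mathcal{P}(\Xi,\vartriangleleft,h)$, so no rearrangement is needed here.

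The heart of the argument --- and the step I expect to be the main obstacle --- is controlling the error part. A typical error term has the form $c\cdot z_U\otimes z_{U'}$, where $z_U$ (resp.\ $z_{U'}$) is the ordered product of the left (resp.\ right) contributions; these are generally neither sorted nor in $\mathcal{P}(\Xi,\vartriangleleft,h)$, so I would apply Lemma \ref{rearragement} to rewrite $z_U\otimes z_{U'}$ as a $k$-combination of $z_M\otimes z_N$ with $M,N\in\mathcal{P}(\Xi,\vartriangleleft,h)$ and $M\trianglelefteq_R\Pi(U)$, $N\trianglelefteq_R\Pi(U')$. Since each chosen $R_i$-summand has both legs in $H[\xi_i,\vartriangleleft,h]_+$, both $U$ and $U'$ are nonempty, hence so are $M,N$. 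It then remains to prove $\Pi(MN)\vartriangleleft_R V$. I would reduce this to a combinatorial statement about multisets of letters: for finite sequences of equal total degree, $\vartriangleleft_R$ on their $\Pi$-sorted forms is governed by the largest letter $\eta^*$ at which the letter-multiplicities differ, being $\vartriangleleft_R$-smaller precisely when it has the smaller multiplicity at $\eta^*$; moreover this order is invariant under adjoining a common multiset. Granting these, $\Pi(MN)\trianglelefteq_R\Pi(UU')$, so it suffices to show $\Pi(UU')\vartriangleleft_R V$. Letting $j_0$ be the largest index for which an $R_{j_0}$-summand was selected, every letter $\vartriangleright\xi_{j_0}$ occurs in $UU'$ with the same multiplicity as in $V$ (the factors $z_{\xi_i}$ with $\xi_i\vartriangleright\xi_{j_0}$ all contribute via the main choice, and no selected $R_i$ produces letters $\trianglerighteq\xi_{j_0}$), whereas $\xi_{j_0}$ itself occurs strictly fewer than $p_{j_0}$ times in $UU'$ (at least one copy of $z_{\xi_{j_0}}$ was spent on $R_{j_0}$, contributing only letters $\vartriangleleft\xi_{j_0}$). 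Thus $\eta^*=\xi_{j_0}$ with strictly smaller multiplicity, giving $\Pi(UU')\vartriangleleft_R V$ and completing the estimate. Assembling the main and error parts yields the asserted membership; throughout, homogeneity of $\Delta$ ensures every term has total degree $|V|$, which is exactly what lets the degree-then-reverse-lexicographic comparison defining $\vartriangleleft_R$ collapse to the multiset analysis above.
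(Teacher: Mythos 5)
Your proof is correct, and it gets there by a genuinely different organization than the paper's. The paper runs a double induction: first it establishes the formula for a single-letter power $z_\xi^n$ by induction on $n$ (via $\Delta(z_\xi^n)=\Delta(z_\xi)\Delta(z_\xi^{n-1})$ and Lemma \ref{rearragement}), then it treats general $V$ by induction on the number of distinct letters, peeling off $V'=(\xi_1,\ldots,\xi_1)$ and multiplying the two inductively known expansions; the error terms are tamed by the observation that $\Pi(A_1)\trianglelefteq_R\Pi(B_1)$ and $\Pi(A_2)\trianglelefteq_R\Pi(B_2)$, with at most one equality, force $\Pi(A_1A_2)\vartriangleleft_R\Pi(B_1B_2)$. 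You instead expand all $p_1+\cdots+p_r$ factors of $\prod_i\Delta(z_{\xi_i})^{p_i}$ in one shot, identify the main part exactly via the binomial theorem (valid since $z_{\xi_i}\otimes 1$ and $1\otimes z_{\xi_i}$ commute, and the resulting words come out already sorted), and for each error term locate the largest index $j_0$ at which an $R_{j_0}$-summand was chosen: letters above $\xi_{j_0}$ then occur in $UU'$ with the same multiplicities as in $V$, while $\xi_{j_0}$ occurs strictly fewer times, giving $\Pi(UU')\vartriangleleft_R V$ directly. The two arguments rest on the same pillars --- Lemma \ref{rearragement} for sorting, plus monotonicity of $\vartriangleleft_R$ under $\Pi$-sorted concatenation (your ``adjoin a common multiset'' claim is precisely the paper's observation specialized to $A_2=B_2$) --- but yours is non-inductive and makes visible exactly where the strict drop below $V$ happens, which is arguably more transparent. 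What you leave unproved (the multiset characterization of $\vartriangleleft_R$ on sorted sequences of equal degree, and the nonemptiness of $M,N$, both of which follow from homogeneity of $\Delta$ forcing every term to have total degree $|V|$) are routine verifications of the same caliber as what the paper itself states without proof, so I would not count them as gaps.
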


\begin{proof}
	Firstly we show that for each $\xi \in \Xi$ and $n\geq 1$ one has
	\[
	\Delta(z_\xi^n) \in \sum_{i=0}^n \binom{n}{i} ~z_\xi^i\otimes z_\xi^{n-i} + \sum\nolimits_{r,s\geq 0 \atop r+s<n} (H[\xi, \vartriangleleft,h]_+\cdot z_\xi^r)\otimes (H[\xi, \vartriangleleft,h]_+\cdot z_\xi^s).
	\]
	We do it by induction on $n$. The initial step is by the assumption of $\Delta$. Then apply the  decomposition $\Delta(z_\xi^n)= \Delta(z_\xi) ~ \Delta(z_\xi^{n-1})$, the induction step is an easy consequence of Lemma \ref{rearragement}. It follows that the desired formula holds for the case that $V=(\xi,\ldots, \xi)$ with $\xi$ appears $n$ times.
	
	For the general case that $V= (\overbrace{\xi_{1},\ldots, \xi_{1}}^{p_{1}}, \ldots,  \overbrace{\xi_{r},\ldots, \xi_{r}}^{p_{r}})$ with $r\geq 1$, $p_i\geq 1$ and  $\xi_1,\ldots, \xi_r$ mutually distinct, we show the formula by induction on $r$. The case that $r=1$ has proved in the above paragraph. For $r\geq 2$, we decompose $V=V'V''$ with $V'= (\overbrace{\xi_{1},\ldots, \xi_{1}}^{p_{1}})$. Then by induction, 
	\begin{eqnarray*}
		\Delta(z_V) &=& \Delta(z_{V'}) \Delta(z_{V''}) \\
		&= & \Big(\sum_{W'|V'} \binom{V'}{W'} ~ z_{W'}\otimes z_{V'/W'} +f' \Big) \cdot \Big (\sum_{W''|V''} \binom{V''}{W''} ~ z_{W''}\otimes z_{V''/W''} +f'' \Big),
	\end{eqnarray*}
	where $f'$  is a linear combination of $z_{M'}\otimes z_{N'}$ with $M', N'  \in \mathcal{P}(\Xi,\vartriangleleft,h)\backslash\{\emptyset\} $ and $\Pi(M'N') \vartriangleleft_R V'$ and likewise for $f''$. Since that $\xi_1 \vartriangleleft \xi_i$ for $i\geq 2$, one has 
	\[
	\Big(\sum_{W'|V'} \binom{V'}{W'} ~ z_{W'}\otimes z_{V'/W'}\Big)~ \Big (\sum_{W''|V''} \binom{V''}{W''} ~ z_{W''}\otimes z_{V''/W''} \Big) = \sum_{W|V} \binom{V}{W} ~ z_W\otimes z_{V/W}.
	\]
	It remains to show the elements $(z_{W'}\otimes z_{V'/W'})\cdot f''$, $f'\cdot (z_{W''}\otimes z_{V''/W''})$ and $f'\cdot f''$ belongs to 
	\[
	 {\rm Span} \big(\{~ z_{M}\otimes z_N~|~ M, N  \in \mathcal{P}(\Xi,\vartriangleleft,h)\backslash\{\emptyset\}, ~ \Pi(MN) \vartriangleleft_R V ~\}\big ).
	\]
	To this end, note that for any elements $A_1,A_2,B_1,B_2\in \mathcal{P}(\Xi)$, if $\Pi(A_1)\trianglelefteq_R \Pi(B_1)$ and $\Pi(A_2)\trianglelefteq_R \Pi(B_2)$ with at most one equality holds then $\Pi(A_1A_2)\vartriangleleft_R \Pi(B_1B_2)$. Since $f'f''$ is a linear combination of these $z_{M'}z_{M''} \otimes z_{N'}z_{N''}$ with $\Pi(M'N') \vartriangleleft_R V'$ and $\Pi(M''N'') \vartriangleleft_R V''$, the desired result follows. Similar discussion applies to the 
   the expressions $(z_{W'}\otimes z_{V'/W'})\cdot f''$ and $f'\cdot (z_{W''}\otimes z_{V''/W''})$.
\end{proof}

\begin{proposition}\label{convolution-expression}
Assume that the conditions of Lemma \ref{rearragement} hold for a connected $\Gamma$-graded algebra $H$. Let $\Delta:H\to H\otimes H$ be a homomorphism of $\Gamma$-graded algebras such that $$\Delta(z_\xi) \in 1\otimes z_\xi +z_\xi\otimes 1 +H[\xi, \vartriangleleft,h]_+\otimes H[\xi, \vartriangleleft,h]_+, \quad \xi\in \Xi.$$  Let $f_1,f_2:H\to H$ be two $\Gamma$-graded linear maps such that    $$f_i(z_V) \in b_i(V)\cdot z_V+{\rm Span} \big(\{~ z_W~|~W  \in \mathcal{P}(\Xi,\vartriangleleft,h), ~ W \vartriangleleft_R V ~\}\big ), \quad i=1,2 $$  with $b_i(V)\in k$ for each $V\in \mathcal{P}(\Xi,\vartriangleleft,h)$. 
	Then the map $f:=\mu_H\circ (f_1\otimes f_2)\circ \Delta$ satisfies 
	$$f(z_V) \in b(V)\cdot z_V+{\rm Span} \big(\{~ z_W~|~W  \in \mathcal{P}(\Xi,\vartriangleleft,h), ~ W \vartriangleleft_R V ~\}\big ), $$
	where $b(V)= \sum_{W|V} \binom{V}{W}~ b_1(W)~ b_2(V/W)$, for each $V\in \mathcal{P}(\Xi,\vartriangleleft,h)$.
\end{proposition}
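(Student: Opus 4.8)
The plan is to evaluate $f(z_V)=\mu_H\big((f_1\otimes f_2)(\Delta(z_V))\big)$ by processing the three maps $\Delta$, $f_1\otimes f_2$, $\mu_H$ in order, keeping every intermediate expression in the shape ``a leading multiple of a basis monomial, plus a tail that is strictly $\vartriangleleft_R$-lower''. The case $V=\emptyset$ is settled at once: since $\Delta$ is a unital graded algebra map, $\Delta(z_\emptyset)=1\otimes 1$, and as no sequence is $\vartriangleleft_R$-smaller than $\emptyset$ the hypothesis forces $f_i(z_\emptyset)=b_i(\emptyset)z_\emptyset$; hence $f(z_\emptyset)=b_1(\emptyset)b_2(\emptyset)z_\emptyset$, matching $b(\emptyset)=\binom{\emptyset}{\emptyset}b_1(\emptyset)b_2(\emptyset)$. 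So assume $V$ is nonempty.

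First I would feed $V$ into Lemma \ref{comultiplicatoin-expression}, giving
\[
\Delta(z_V)\in\sum_{W|V}\binom{V}{W}z_W\otimes z_{V/W}+\Span\big(\{z_M\otimes z_N : M,N\in\mathcal{P}(\Xi,\vartriangleleft,h)\setminus\{\emptyset\},\ \Pi(MN)\vartriangleleft_R V\}\big).
\]
Then apply $f_1\otimes f_2$ termwise: by the hypothesis $f_i(z_U)\in b_i(U)z_U+\Span(\{z_{U'}:U'\vartriangleleft_R U\})$, each tensor $z_M\otimes z_N$ yields a leading term $b_1(M)b_2(N)\,z_M\otimes z_N$ together with tails $z_{M'}\otimes z_{N'}$ in which $M'\trianglelefteq_R M$ and $N'\trianglelefteq_R N$ with at least one inequality strict. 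Finally apply $\mu_H$, converting each $z_{M'}\otimes z_{N'}$ into the concatenation product $z_{M'}z_{N'}=z_{M'N'}$, and invoke Lemma \ref{rearragement} to rewrite $z_{M'N'}$ in the PBW form: a scalar multiple of $z_{\Pi(M'N')}$ plus a $\vartriangleleft_R$-lower tail, or a purely lower tail when $\Pi(M'N')\notin\mathcal{P}(\Xi,\vartriangleleft,h)$.

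The engine that confines every error term below $z_V$ is the concatenation-monotonicity of $\vartriangleleft_R$ already isolated inside the proof of Lemma \ref{comultiplicatoin-expression}: if $\Pi(A_1)\trianglelefteq_R\Pi(B_1)$ and $\Pi(A_2)\trianglelefteq_R\Pi(B_2)$ with at most one equality, then $\Pi(A_1A_2)\vartriangleleft_R\Pi(B_1B_2)$. Applying this to $(M',N')$ against $(M,N)$ gives $\Pi(M'N')\vartriangleleft_R\Pi(MN)$; combined with $\Pi(MN)\trianglelefteq_R V$ (equality exactly on the main terms $z_W\otimes z_{V/W}$, strict on the coproduct tail) this yields $\Pi(M'N')\vartriangleleft_R V$ for every genuine error term, so after straightening each such term lies in $\Span(\{z_W:W\vartriangleleft_R V\})$.

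It then remains to harvest the coefficient of $z_V$. By the previous step the only surviving contributions come from the main terms $\binom{V}{W}z_W\otimes z_{V/W}$, with their leading eigenvalues $b_1(W)b_2(V/W)$, through the straightening of $z_Wz_{V/W}=z_{W\cdot(V/W)}$; since $W$ and $V/W$ reassemble precisely to $V$, Lemma \ref{rearragement} gives $\Pi(W\cdot(V/W))=V$, and the leading straightening coefficient is $1$ (the transpositions needed to sort $z_Wz_{V/W}$ into $z_V$ carry coefficient one for the bases at hand, where the commutation constants $a_{\nu,\mu}$ are trivial). Summing over $W|V$ produces $b(V)=\sum_{W|V}\binom{V}{W}b_1(W)b_2(V/W)$, as claimed. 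The main obstacle is exactly this simultaneous bookkeeping: one must control the three independent sources of lower-order tails — the coproduct tail of Lemma \ref{comultiplicatoin-expression}, the tails of $f_1$ and $f_2$, and the straightening tails of Lemma \ref{rearragement} — and show via repeated use of the concatenation-monotonicity that none of them can feed back into the diagonal entry $z_V$, while at the same time pinning down that the diagonal straightening coefficient is exactly $1$.
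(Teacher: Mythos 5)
Your proof follows the route the paper intends (its own proof of Proposition \ref{convolution-expression} is a one-line appeal to Lemma \ref{rearragement} and Lemma \ref{comultiplicatoin-expression}), and most of it is carried out correctly: the case $V=\emptyset$, the confinement of all three sources of tails below $z_V$ via the concatenation-monotonicity of $\vartriangleleft_R$, and the identification $\Pi\big(W\cdot(V/W)\big)=V$ for the main terms. The genuine gap is in your computation of the diagonal coefficient. You justify the claim that straightening $z_Wz_{V/W}$ into $z_V$ carries coefficient $1$ by asserting that ``the commutation constants $a_{\nu,\mu}$ are trivial for the bases at hand.'' That is not among the hypotheses: the proposition assumes only the conditions of Lemma \ref{rearragement}, whose condition (2) allows arbitrary $a_{\nu,\mu}\in k^{\times}$. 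Under those hypotheses, Lemma \ref{rearragement} gives $z_Wz_{V/W}=z_{W\cdot(V/W)}\in c_W\, z_V+{\rm Span}\big(\{z_U : U\vartriangleleft_R V\}\big)$ with $c_W=\prod a_{\mu,\nu'}$, the product running over the inversion pairs ($\mu$ an entry of $W$, $\nu'$ an entry of $V/W$, $\nu'\vartriangleleft\mu$). So the argument you wrote actually yields the diagonal coefficient $\sum_{W|V}\binom{V}{W}b_1(W)b_2(V/W)\,c_W$, which coincides with the asserted $b(V)$ only when the relevant $c_W$ are $1$ modulo the lower span.

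The discrepancy is not cosmetic. In the simplest instance $V=(\mu,\nu)$ with $\mu\vartriangleleft\nu$, $z_\mu,z_\nu$ primitive, and $f_1=f_2=\id_H$ (so $b_1=b_2\equiv 1$), one gets $f(z_V)=\mu_H(\Delta(z_V))=3z_V+z_\nu z_\mu\in (3+a_{\nu,\mu})z_V+H[\nu,\vartriangleleft,h]$, whereas the proposition asserts the coefficient $\sum_{W|V}\binom{V}{W}=4$; if $a_{\nu,\mu}\neq 1$ and $z_V\notin{\rm Span}\big(\{z_U : U\vartriangleleft_R V\}\big)$, these are incompatible. To close the gap you must either restrict to $a_{\nu,\mu}=1$ (which is in fact what holds in every application in the paper, cf.\ condition (4) of Theorem \ref{PBW-basis} and the two examples in Section \ref{main-results}), or track the constants $c_W$ and prove that the existence of the graded algebra map $\Delta$ with the stated near-primitivity of the $z_\xi$ forces the relevant constants to be trivial; for instance, when $\{z_V\}_{V\in\mathcal{P}(\Xi,\vartriangleleft,h)}$ is a basis, extracting the coefficient of $z_\nu\otimes z_\mu$ from the identity $\Delta(z_\nu)\Delta(z_\mu)=a_{\nu,\mu}\Delta(z_\mu)\Delta(z_\nu)+\Delta(g)$ yields $a_{\nu,\mu}=1$. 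Neither supplement appears in your write-up, so the step as justified fails; to be fair, the paper's own one-sentence proof glosses over exactly the same point.
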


\begin{proof}
It is an easy consequence of Lemma \ref{rearragement} and Lemma \ref{comultiplicatoin-expression} .
\end{proof}

The next proposition is independent of the main results of this paper. However, it has its own right of interests. We consider it as an organic part of the method used in this section.

\begin{proposition}\label{chang-of-order}
Assume that the conditions of Lemma \ref{rearragement} hold  for a connected $\Gamma$-graded algebra $H$. 
If the family $\{z_V\}_{V\in \mathcal{P}(\Xi,\vartriangleleft,h)}$ of elements of $H$ is linearly independent (resp. spans $H$), then so is the family $\{z_V\}_{V\in \mathcal{P}(\Xi,<,h)}$ of elements of $H$ for any total order $<$ on $\Xi$.
\end{proposition}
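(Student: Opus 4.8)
The plan is to use Lemma~\ref{rearragement} as a change-of-basis device between the two candidate families. Fix an arbitrary total order $<$ on $\Xi$. I would observe first that a $<$-sorted (resp. $\vartriangleleft$-sorted) finite sequence is completely determined by its underlying multiset, and that the admissibility condition defining $\mathcal{P}(\Xi,<,h)$ (resp. $\mathcal{P}(\Xi,\vartriangleleft,h)$) --- each index occurring fewer than $h(\cdot)$ times --- depends only on that multiset. Hence the rearrangement map $\Pi$ of Lemma~\ref{rearragement} restricts to a bijection $\Pi:\mathcal{P}(\Xi,<,h)\to \mathcal{P}(\Xi,\vartriangleleft,h)$ with $|\Pi(V)|=|V|$. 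Moreover, since $\Pi(V)\in \mathcal{P}(\Xi,\vartriangleleft,h)$ for every $V\in \mathcal{P}(\Xi,<,h)$, the second alternative of Lemma~\ref{rearragement} applies and gives a triangular expansion
\[
z_V \in c_V\cdot z_{\Pi(V)} + {\rm Span}\big(\{~ z_W ~|~ W\in \mathcal{P}(\Xi,\vartriangleleft,h),~ W\vartriangleleft_R \Pi(V) ~\}\big), \qquad c_V\in k^\times,
\]
where $c_V$ is the indicated product of the scalars $a_{\nu_i,\nu_j}$ and is therefore nonzero. This is the engine of the whole argument: expressed in the family indexed by $\mathcal{P}(\Xi,\vartriangleleft,h)$, the element $z_V$ has leading term $z_{\Pi(V)}$ with an invertible coefficient, all remaining terms being strictly smaller for $\vartriangleleft_R$.

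Next I would reduce to a single homogeneous degree. Since every $z_V$ is homogeneous of degree $|V|$ and $H=\bigoplus_\gamma H_\gamma$, any linear relation among the $z_V$ splits into its homogeneous components, and likewise spanning can be checked degree by degree. Fixing $\gamma$, I record that $\vartriangleleft_R$ restricts to a \emph{well order} on $\{W\in \mathcal{P}(\Xi,\vartriangleleft,h): |W|=\gamma\}$: it is total there because $\vartriangleleft$ is total and any two sequences of equal degree are $\vartriangleleft_R$-comparable, and it is well-founded by Lemma~\ref{well-founded-order} (hypothesis (3) of Lemma~\ref{rearragement} supplies exactly the well-foundedness needed). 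Thus the bijection $\Pi$ and the triangular expansion are compatible with a genuine well order on each degree.

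For the independence statement, suppose $\{z_W\}_{W\in \mathcal{P}(\Xi,\vartriangleleft,h)}$ is linearly independent and take a nontrivial homogeneous relation $\sum_{V\in S}\lambda_V z_V=0$ of degree $\gamma$, where $S\subseteq \mathcal{P}(\Xi,<,h)$ is finite and all $\lambda_V\neq 0$. Among the finitely many distinct elements $\{\Pi(V):V\in S\}$ let $W_0=\Pi(V_0)$ be the $\vartriangleleft_R$-largest. Expanding the relation in the family indexed by $\mathcal{P}(\Xi,\vartriangleleft,h)$ and collecting the coefficient of $z_{W_0}$, the only surviving contribution is $\lambda_{V_0}c_{V_0}$: injectivity of $\Pi$ rules out other leading terms equal to $W_0$, and maximality of $W_0$ rules out contributions from the $\vartriangleleft_R$-lower tails. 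Linear independence of the $z_W$ forces $\lambda_{V_0}c_{V_0}=0$, hence $\lambda_{V_0}=0$, a contradiction. For the spanning statement, assuming $\{z_W\}_{W\in \mathcal{P}(\Xi,\vartriangleleft,h)}$ spans $H$, I would show by well-founded induction on $\vartriangleleft_R$ within each degree that every $z_{W_0}$ lies in ${\rm Span}\{z_V: V\in \mathcal{P}(\Xi,<,h)\}$: writing $V_0=\Pi^{-1}(W_0)$ and solving the triangular expansion for $z_{W_0}=c_{V_0}^{-1}(z_{V_0}-\sum_{W\vartriangleleft_R W_0}b_{V_0,W}z_W)$, the induction hypothesis disposes of the tail while $z_{V_0}$ is manifestly in the span.

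The computations here are light; the main obstacle is organizational rather than technical. One must verify that $\Pi$ genuinely restricts to a bijection between the two admissible index sets (so that the two families are indexed by the same data), and --- more delicately --- that $\vartriangleleft_R$ is a bona fide well order on each fixed degree, since the index set in a given degree may be infinite. This is precisely where Lemma~\ref{well-founded-order} and hypothesis (3) of Lemma~\ref{rearragement} are indispensable: without well-foundedness the inductive inversion of the triangular system in the spanning case would break down, even though the independence case needs only totality of $\vartriangleleft_R$ on a finite set.
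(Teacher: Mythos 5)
Your proof is correct and follows essentially the same route as the paper's: both exploit the bijection $\Pi:\mathcal{P}(\Xi,<,h)\to\mathcal{P}(\Xi,\vartriangleleft,h)$ together with the triangular expansion of Lemma \ref{rearragement} (invertible leading coefficient on $z_{\Pi(V)}$, tail strictly smaller for $\vartriangleleft_R$), extracting or solving for the $\vartriangleleft_R$-largest term to get independence, and running a well-founded $\vartriangleleft_R$-induction to get spanning. The only cosmetic difference is that you first reduce to a fixed degree (where $\vartriangleleft_R$ is a genuine well order), whereas the paper inducts over all of $\mathcal{P}(\Xi)$ using the well-foundedness supplied by Lemma \ref{well-founded-order} directly.
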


\begin{proof}
	Note that the rearrange map $\Pi: \mathcal{P}(\Xi,<,h) \to \mathcal{P}(\Xi,\vartriangleleft,h)$ defined as in Lemma \ref{rearragement} is a  bijection. 
	
	(1) Assume $\{z_V\}_{V\in \mathcal{P}(\Xi,\vartriangleleft,h)}$ is linearly independent but $\{z_V\}_{V\in \mathcal{P}(\Xi,<,h)}$ is not. Then there is a positive integer $p\geq1$, distinct finite sequences $V_1, \ldots, V_p \in \mathcal{P}(\Xi,<,h)$ of the same degree  and nonzero scalars $a_1,\ldots, a_p\in k$ such that 
	$\sum_{i=1}^p a_i\cdot z_{V_i} =0.$
	Without loss of generality, we may assume 
	$$\Pi(V_1)\vartriangleleft_R \Pi(V_2) \vartriangleleft_R  \cdots \vartriangleleft_R \Pi(V_p). $$ Then by Lemma \ref{rearragement}, one has
	\begin{eqnarray*}
		z_{\Pi(V_p)} & \in &  {\rm Span} \big(\{~ z_W~|~W  \in \mathcal{P}(\Xi,\vartriangleleft,h), ~ W \vartriangleleft_R \Pi(V_p) ~\}\big ),
	\end{eqnarray*}
	which contradicts that  $\{z_V\}_{V\in \mathcal{P}(\Xi,\vartriangleleft,h)}$ is linearly independent.
	
	(2) Assume 	$\{z_V\}_{V\in \mathcal{P}(\Xi,\vartriangleleft,h)}$ spans $H$. Let $A\subseteq H$ be the subspace spanned by  $\{z_V\}_{V\in \mathcal{P}(\Xi,<,h)}$. To see $A=H$ it suffices to show  $z_V\in A$ for any $V\in \mathcal{P}(\Xi,\vartriangleleft,h)$. 
	We prove it by induction on $V$ with respect to $\vartriangleleft_R$. If $V$ is minimal, namely $V=\emptyset$, then the result holds by convention. 
	 Otherwise,  
	\begin{eqnarray*}
		z_V &\in& k^{\times} \cdot z_{\Pi^{-1}(V)} +  {\rm Span} \big(\{~ z_W~|~W  \in \mathcal{P}(\Xi,\vartriangleleft,h), ~ W \vartriangleleft_R V ~\}\big )
	\end{eqnarray*} 
	by Lemma \ref{rearragement}. Therefore, $A=H$ by the induction hypothesis.
\end{proof}

\section{Proof of Theorem \ref{Adams operators}}
\label{Proof-Adams-operators}

In this section, we address the proof of Theorem \ref{Adams operators}. We continue to use the notation and conventions employed in the previous sections. By convention, $0^0:=1$. 

\begin{lemma}\label{general formula}
Let $H$ be a connected $\Gamma$-graded Hopf algebra, where $\Gamma$ is a nontrivial free abelian monoid. Let $T:H\to H$ be a graded linear map satisfying that 
$$T(z_V)\in a^{l(V)} \cdot z_V + {\rm Span} \big(\{~ z_W~|~W  \in \mathcal{P}(\Xi,\vartriangleleft,h), ~ W \vartriangleleft_R V ~\}\big )$$ for each $V\in \mathcal{P}(\Xi,\vartriangleleft,h)$, where $a\in k$ is a constant coefficient. Then 
\[T^{*n}(z_V)\in (na)^{l(V)} \cdot z_V + {\rm Span} \big(\{~ z_W~|~W  \in \mathcal{P}(\Xi,\vartriangleleft,h), ~ W \vartriangleleft_R V ~\}\big )\]
for each $V\in \mathcal{P}(\Xi,\vartriangleleft,h)$ and $n\geq 1$.
\end{lemma}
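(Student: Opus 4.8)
The plan is to induct on $n$, exploiting the convolution recursion $T^{*n} = \mu_H \circ (T^{*(n-1)} \otimes T) \circ \Delta_H$ and feeding it into Proposition \ref{convolution-expression}. The base case $n=1$ is immediate from the hypothesis on $T$, since $a^{l(V)} = (1\cdot a)^{l(V)}$.

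For the inductive step I would argue as follows. The convolution product of two graded linear maps is again graded linear, so $T^{*(n-1)}$ is graded linear, and by the inductive hypothesis it has the same upper-triangular shape as $T$, with diagonal coefficient $((n-1)a)^{l(V)}$ on $z_V$. Thus $f_1 := T^{*(n-1)}$ and $f_2 := T$ both meet the hypotheses of Proposition \ref{convolution-expression}, with coefficient functions $b_1(V) = ((n-1)a)^{l(V)}$ and $b_2(V) = a^{l(V)}$; here I rely on the standing assumptions (the conditions of Lemma \ref{rearragement} and the comultiplication condition on $\Delta_H$) that make the proposition applicable. The proposition then gives $T^{*n}(z_V) \in b(V)\cdot z_V + {\rm Span}(\{z_W : W \in \mathcal{P}(\Xi,\vartriangleleft,h),\ W \vartriangleleft_R V\})$ with
$$b(V) = \sum_{W|V} \binom{V}{W} ((n-1)a)^{l(W)} \, a^{l(V/W)}.$$

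The only thing left to verify is the identity $b(V) = (na)^{l(V)}$. Writing $V = (\overbrace{\xi_1,\ldots,\xi_1}^{p_1},\ldots,\overbrace{\xi_r,\ldots,\xi_r}^{p_r})$ with distinct $\xi_i$, a subsequence $W\,|\,V$ amounts to a choice of integers $0 \le q_i \le p_i$, and then $\binom{V}{W} = \prod_i \binom{p_i}{q_i}$, $l(W) = \sum_i q_i$, and $l(V/W) = \sum_i (p_i - q_i)$. So the sum factors over the index $i$, and applying the binomial theorem to each factor I expect
$$b(V) = \prod_{i=1}^{r} \sum_{q_i=0}^{p_i} \binom{p_i}{q_i} ((n-1)a)^{q_i}\, a^{p_i - q_i} = \prod_{i=1}^{r} (na)^{p_i} = (na)^{l(V)}.$$

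I do not foresee any serious obstacle: the substantive combinatorics has already been carried out in Proposition \ref{convolution-expression}, and the lemma is in effect its iterate. The one point that matters is that the diagonal coefficient handed over by the inductive hypothesis depends only on the length $l(V)$; this is precisely what lets the coefficient sum decouple into independent binomial sums and collapse to $(na)^{l(V)}$, so that the new diagonal coefficient again depends only on the length and the induction can proceed.
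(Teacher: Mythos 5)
Your proposal is correct and follows essentially the same route as the paper: induction on $n$, applying Proposition \ref{convolution-expression} to the convolution decomposition of $T^{*n}$ (the paper writes $T * T^{*(n-1)}$ rather than your $T^{*(n-1)} * T$, an immaterial difference), and then collapsing the coefficient sum $\sum_{W|V}\binom{V}{W}(\cdot)^{l(W)}(\cdot)^{l(V/W)}$ via the binomial theorem factor by factor to get $(na)^{l(V)}$. The one point you flag — that the diagonal coefficient depends only on $l(V)$, which is what keeps the induction running — is exactly the mechanism in the paper's proof as well.
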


\begin{proof}
We prove it by induction on $n$. The case $n=1$ is by assumption. Now assume $n\geq 2$ and $$T^{*n-1}(z_V)\in ((n-1)a)^{l(V)} \cdot z_V + {\rm Span} \big(\{~ z_W~|~W  \in \mathcal{P}(\Xi,\vartriangleleft,h), ~ W \vartriangleleft_R V ~\}\big ).$$ Consider the decomposition $T^{*n} = T* T^{*n-1}$. Proposition \ref{convolution-expression} tells us that
\[
T^{*n}(z_V)\in b(V) \cdot z_V + {\rm Span} \big(\{~ z_W~|~W  \in \mathcal{P}(\Xi,\vartriangleleft,h), ~ W \vartriangleleft_R V ~\}\big )
\]
with 
\[
b(V) = \sum_{W|V} \binom{V}{W}~ a^{l(W)}~ \big((n-1)a\big)^{l(V/W)}.
\]
It remains to show $b(V)= (na)^{l(V)}$ for the induction step. To this end, let
$V= (\overbrace{\xi_{1},\ldots, \xi_{1}}^{p_{1}}, \ldots,  \overbrace{\xi_{r},\ldots, \xi_{r}}^{p_{r}})$ with $\xi_1,\ldots, \xi_r \in \Xi$ being mutually distinct. It is easy to check that
\begin{eqnarray*}
\sum_{W|V} \binom{V}{W}~ a^{l(W)}~ \big((n-1)a \big)^{l(V/W)} & = & \prod_{i=1}^{r} \Big(\sum_{s_i=0}^{p_i} \binom{p_i}{s_i} ~ a^{s_i} \big((n-1)a\big)^{p_i-s_i} \Big ) \\
&= & \prod_{i=1}^{r} \Big(a+ (n-1)a \Big)^{p_i} \\
&=& (na)^{l(V)}.
\end{eqnarray*}
So $b(V)= (na)^{l(V)}$ as desired. This complete the proof of  the result.
\end{proof}

\begin{proof}[Proof of Theorem \ref{Adams operators}]
The result for $n=0$ is clear, since $\Psi_0:= \eta_H\circ \varepsilon_H$. The result for $n\geq 1$ is a direct consequence of Lemma \ref{general formula} by considering $T=\id_H$ with $a=1$. Note that $\Psi_{n} = (S_H)^{* -n}$. So by Lemma \ref{general formula}, to see the result for $n\leq -1$, it suffices to show 
\[
S_H(z_V) \in (-1)^{l(V)} z_V+ {\rm Span} \big(\{~ z_W~|~W  \in \mathcal{P}(\Xi,\vartriangleleft,h), ~ W \vartriangleleft_R V ~\}\big )
\]
for each $V\in \mathcal{P}(\Xi,\vartriangleleft,h)$.
By Proposition \ref{convolution-expression}, one has
\[
(\eta_H\circ \varepsilon_H -\id_H)^{* n} (z_V) \in k\cdot z_V + {\rm Span} \big(\{~ z_W~|~W  \in \mathcal{P}(\Xi,\vartriangleleft,h), ~ W \vartriangleleft_R V ~\}\big )
\]
for each $V\in \mathcal{P}(\Xi,\vartriangleleft,h)$ and $n\geq 0$. 
It follows from the formula $S_H=\sum_{n\geq 0} (\eta_H\circ \varepsilon_H -\id_H)^{* n}$ that
\[
S_H(z_V) \in b(V)\cdot z_V+ {\rm Span} \big(\{~ z_W~|~W  \in \mathcal{P}(\Xi,\vartriangleleft,h), ~ W \vartriangleleft_R V ~\}\big )
\]
for each $V\in \mathcal{P}(\Xi,\vartriangleleft,h)$, where $b(V)\in k$. It remains to prove $b(V)=(-1)^{l(V)}$. We do it by induction on $l(V)$. The case for $l(V)=0$ is clear, since $z_\emptyset =1$. Now assume $l(V)=n>0$ and the desired formula holds for $l(V)<n$. Apply Proposition \ref{convolution-expression} to the decomposition $\eta_H\circ \varepsilon_H  = \id_H* S_H$, one gets
\begin{eqnarray*}
0 &=& \binom{V}{\emptyset}1^{l(\emptyset)}b(V) -\binom{V}{\emptyset}1^{l(\emptyset)} (-1)^{l(V)} + \sum_{W|V} \binom{V}{W}~ 1^{l(W)}~ (-1)^{l(V/W)} \\
 &=& b(V)-(-1)^{l(V)} + \Big(1+(-1) \Big)^{l(V)}\\
 &=&b(V)-(-1)^{l(V)}.
\end{eqnarray*}
Therefore, $b(V)= (-1)^{l(V)}$ as desired. This completes the proof of Theorem \ref{Adams operators}.
\end{proof}

\section{Proof of Theorem \ref{PBW-basis}}
\label{proof-PBW}

This section is devoted to prove Theorem \ref{PBW-basis}. We consider it as a special case of a more general setting. The existence of such data, as this theorem asked for, has been established essentially in \cite{ZSL3} and later extended to a more general context in \cite{LZ,Zhou}. It was done constructively by means of a combinatorial method based on Lyndon words, which is originally developed by Kharchenko \cite{Kh}. In the sequel, we will show briefly how these desired datas are constructed and why they are workable. It is mainly for self-containedness and  reader's convenience. 

Throughout, let  $H:=\bigoplus_{\gamma\in \Gamma} H_\gamma$ be a connected $\Gamma$-graded algebra, where $\Gamma$ is a nonzero free abelian monoid. 
We fix a set $X$ of homogeneous generators of $H$ of nonzero degree.
We denote by $\langle X\rangle$ the set of words on $X$, by $1$ the empty word, and by $l(u)$ the length of a word $u$. Let $\deg:\langle X\rangle \to \Gamma$ be the homomorphism of monoids  assigning $x\in X$ to the degree of $x$ as an element of $H$.

Choose a well order $<_\Gamma$ on $\Gamma$ which is compatible with the addition of $\Gamma$ in the sense that if $\gamma_1<_\Gamma \gamma_2$ then $\gamma+\gamma_1<_\Gamma\gamma+\gamma_2$ for any $\gamma_1,\gamma_2, \gamma \in \Gamma$; also choose  a well order $<_\gamma$ on $X_\gamma:=\{~x\in X~|~\deg(x)=\gamma~\}$ for each  $\gamma\in \Gamma \backslash\{ 0\}$. Then define a partial order $<$ on $X$  as follows: for $x_1,x_2\in X$,
\begin{align*}\label{definition-deglex}
	x_1 < x_2\,\, \Longleftrightarrow \,\, \left\{
	\begin{array}{llll}
		\deg(x_1)<_\Gamma\deg(x_2), \quad \text{or} && \\
		\deg(x_1) =\deg(x_2)\, \text{ and } x_1 <_{\deg(x_1)} x_2.
	\end{array}\right. 
\end{align*}
It is easy to check that $<$ is a well order on $X$. The  \emph{pseudo-lexicographic order}  on $\langle X\rangle$ associated to $<$, which is denoted by $<_{\lex}$, is defined as follows: for $u,v \in \langle X\rangle$,
\begin{eqnarray*}
	\label{definition-deglex}
	u <_{\lex} v\,\, \Longleftrightarrow \,\, \left\{
	\begin{array}{llll}
		u=vw \text{ for some } w\neq 1,\quad \text{or}&& \\
		u=rxs,\, v=ryt,\, \text{ with } x,y\in X;\, x<y;  \, r,s,t\in \langle X\rangle.
	\end{array}\right.
\end{eqnarray*}
Note that it is a total order compatible with the concatenation of words from left but not from right.  For example if $x,y\in X$ with $x<y$, one has $y^2<_{\lex}y$ but $yx<_{\lex} y^2x$. 

\begin{lemma}\label{lex-restriction}
The restriction of $<_{\lex}$ to the set $\{ u\in \langle X\rangle ~|~ \deg(u)=\gamma \}$ is a well order for each $\gamma\in \Gamma$.	
\end{lemma}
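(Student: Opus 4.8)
The plan is to prove well-foundedness, since $<_{\lex}$ is already known to be total: it suffices to show that there is no infinite strictly $<_{\lex}$-descending sequence inside the set $\{u\in\langle X\rangle : \deg(u)=\gamma\}$. I would argue by induction on $\gamma$ with respect to the well-founded partial order on $\Gamma$ introduced in the proof of Lemma \ref{well-founded-order} (where $\alpha<\beta$ iff $\beta=\alpha+\gamma'$ for some $\gamma'\neq 0$); equivalently, one may induct on $N(\gamma)\in\mathbb{N}$, noting that every word of degree $\gamma$ has length at most $N(\gamma)$ because each letter of $X$ has nonzero degree. The base case $\gamma=0$ is trivial, as the empty word is the only word of degree $0$.

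The key preliminary observation, which I would record first, is that the prefix clause in the definition of $<_{\lex}$ never applies between two words of the \emph{same} degree: if $u=vw$ with $w\neq 1$, then $\deg(w)\neq 0$ and hence $\deg(u)\neq\deg(v)$. Thus, restricted to words of a fixed degree $\gamma$, the comparison $u<_{\lex}v$ is governed entirely by the second clause, i.e.\ by the first position at which $u$ and $v$ differ, the smaller letter (in the well order $<$ on $X$) producing the smaller word.

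Granting this, suppose for contradiction that $u_1>_{\lex}u_2>_{\lex}\cdots$ is an infinite strictly descending sequence of words of degree $\gamma\neq 0$. Writing $a_i$ for the first letter of $u_i$, each descent $u_{i+1}<_{\lex}u_i$ forces, by the first-differing-letter rule, either $a_{i+1}=a_i$ (when the two words first differ beyond the first position) or $a_{i+1}<a_i$. Hence $(a_i)_i$ is weakly $<$-decreasing in $X$; since $<$ is a well order, it must stabilize, say $a_i=a$ for all $i\geq M$. Stripping this common first letter gives $u_i=a\,u_i'$ for $i\geq M$, with each $u_i'$ of a common degree $\gamma'$ satisfying $\gamma=\deg(a)+\gamma'$; as $\deg(a)\neq 0$ we get $\gamma'<\gamma$. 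The same rule then shows $u_M'>_{\lex}u_{M+1}'>_{\lex}\cdots$ is again strictly descending, now among words of degree $\gamma'$, contradicting the induction hypothesis and closing the induction.

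The only delicate points, where I expect the bookkeeping to concentrate, are verifying that the second clause of $<_{\lex}$ is preserved under deleting a common first letter (so that the stripped sequence genuinely descends in the lower degree $\gamma'$) and confirming that a weakly decreasing sequence in the well order $<$ on $X$ must stabilize, rather than merely failing to admit an infinite strictly decreasing subsequence. Neither is difficult, but both are essential for the single induction on degree to terminate correctly.
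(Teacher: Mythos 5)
Your proof is correct and is essentially the paper's own argument: the paper disposes of this lemma by saying it follows ``by essentially the same argument as that of Lemma \ref{well-founded-order}'', whose proof is precisely your scheme of inducting on $\gamma$ with respect to the well-founded order on $\Gamma$, stabilizing the first letters via the well order on $X$, and stripping them to drop to a smaller degree. Your preliminary observation that the prefix clause of $<_{\lex}$ never applies between words of equal degree is exactly the adjustment needed to transplant that argument to $\langle X\rangle$, so the proposal matches the intended proof.
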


\begin{proof}
It follows  by  essentially the same argument as that of Lemma \ref{well-founded-order}.	
\end{proof}

\begin{definition}
A word $u\in \langle X\rangle$ is called \textit{Lyndon}  if $u$ is nonempty and $wv <_{\lex} u $ for every factorization $u=vw$ with $v, w\neq 1$. The set of all Lyndon words on $X$ is denoted by $\L$.
\end{definition}

For a word  $u$ of length $\geq2$, define   $u_R\in \langle X\rangle $ to be  the  largest proper suffix of $u$ with respect to $<_{\lex}$ and define  $u_L\in \langle X\rangle $ by the decomposition $u=u_Lu_R$.  The pair of words
$$\sh(u):=(u_L,u_R)$$ is called  the {\em Shirshov factorization} of $u$. Note that $u$ is Lyndon if and only if $u_L$ and $u_R$ are both Lyndon and $u_R<_{\lex} u_L$ (see \cite[Proposition 5.1.3]{Lo} with an appropriate adjustment).

Let $k\langle X\rangle$ be the free algebra on $X$, which  has $\langle X\rangle$ as a linear basis. We consider $k\langle X\rangle$ as a connected $\Gamma$-graded algebra with the natural grading induced from that of $H$. Using the Shirshov factorization, we define the bracket map $[-]: \L\to k\langle X\rangle$ inductively by 
\begin{eqnarray*}
	[u]\,\, := \,\, \left\{
	\begin{array}{ll}
		~	u , & \text{if } u \in X \\
		~	[u_L][u_R]-[u_R][u_L], &  \text{if } u \not\in X.
	\end{array}\right.
\end{eqnarray*}
Note that $[u]$ is homogeneous of the same degree as that of $u$.

Let $\pi:k\langle  X\rangle \to H $ be the canonical homomorphism of $\Gamma$-graded algebras induced by the inclusion map $X\hookrightarrow H$. Note that $I:=\ker \pi$ is a homogeneous ideal of $k\langle X\rangle$.

\begin{definition}
	A word $u\in \langle X\rangle$ is called \emph{$I$-reducible} if $u$ is the leading word of some homogeneous element of $I$ of the same degree as that of $u$ with respect to $<_{\lex}$. Otherwise, it is called \emph{$I$-irreducible}. The set of all $I$-irreducible Lyndon words on $X$ is denoted by $\mathcal{N}_I$.
\end{definition}

\begin{definition}
Let $h_I:\L \to \mathbb{N}_\infty$ be the map given by 
\[
h_I(u):= \min\{ ~ n\geq 1 ~ | ~  u^n  ~ \text{is $I$-reducible} ~ \}.
\]
By convention, $h_I(u) :=\infty$ if there is no integer $n$ such that $u^n$ is $I$-reducible.  Note that $h_I(u)=1$ if  $u$ is $I$-reducible; and $h_I(u)\geq 2$ for every $I$-irreducible Lyndon word $u$. 
\end{definition}


\begin{proposition}\label{PBW-generators}
Let $\Xi:=\N_I$ and $z_\xi:=\pi([\xi])$ for each $\xi\in \Xi$. Let $\vartriangleleft$ and $h$ be the restriction of  $<_{\lex}$ and  $h_I$  to $\Xi$ respectively. Assume there is a homomorphism of $\Gamma$-graded algebras $\Delta:H\to H\otimes H$ such that $\Delta(z) \in z\otimes 1+1\otimes z +H_+\otimes H_+$ for any homogeneous element $z$ of nonzero degree. Then 
\begin{enumerate}
	\item $\{z_V\}_{V\in \mathcal{P}(\Xi,\vartriangleleft,h)}$ is a basis of $H$;
	\item $\Delta(z_\xi) \in 1\otimes z_\xi +z_\xi\otimes 1 +H[\xi, \vartriangleleft, h]_+\otimes H[\xi, \vartriangleleft, h]_+$ for each $\xi\in \Xi$;
	\item $z_\xi^{h(\xi)} \in H[\xi, \vartriangleleft, h]$ for each $\xi\in \Xi$ with $h(\xi) <\infty$;
	\item $z_\nu z_\mu \in z_\mu z_\nu + H[\nu, \vartriangleleft, h]$ for each pair $\mu, \nu \in \Xi$ with $\mu \vartriangleleft \nu$;
	\item if $k$ is of characteristic $0$ then $h(\xi)$ is infinity for each $\xi\in \Xi$; while if $k$ is of characteristic $p>0$ then $h(\xi)$ is either infinity or a power of $p$ for  each $\xi\in \Xi$;
	\item the restriction of $\vartriangleleft$ to the set $\{~ \xi\in \Xi~|~ \deg(z_\xi)=\gamma ~\}$ is a well order for each $\gamma\in \Gamma$.
\end{enumerate}
\end{proposition}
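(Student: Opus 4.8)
The plan is to split the six assertions into those controlled purely by the multiplicative Gröbner--Shirshov structure of $H=k\langle X\rangle/I$ and the single assertion, Part (2), that genuinely involves the coproduct $\Delta$. Parts (1), (3), (4) and (5) concern only the algebra $H$ together with the reduction system attached to $<_{\lex}$ and the height function $h_I$, so I would obtain them from the Lyndon-word PBW theorem in this generality, namely \cite[Theorem 4.3]{Zhou} (see also \cite{ZSL3,LZ,Kh}). The mechanism is that the standard monomials $z_V$, $V\in\mathcal{P}(\Xi,\vartriangleleft,h)$, are exactly the images of the $I$-irreducible words, and these triangulate $\langle X\rangle$ against $<_{\lex}$ via the leading-word identity $\lm([u])=u$ for Lyndon $u$; this gives the basis statement (1), the reduction $z_\xi^{h(\xi)}\in H[\xi,\vartriangleleft,h]$ directly from the definition of $h_I$ (3), the commutation relation (4) by reducing the leading word $\nu\mu$ of $z_\nu z_\mu$ through Lemma \ref{rearragement}, and the $p$-power constraint (5). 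Part (6) is immediate: $\vartriangleleft$ is by definition the restriction of $<_{\lex}$, and Lemma \ref{lex-restriction} already records that $<_{\lex}$ restricts to a well order on each fixed-degree slice.

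The heart of the matter, and the main obstacle, is Part (2): the middle component of $\Delta(z_\xi)$ should involve only standard monomials all of whose Lyndon letters are $\vartriangleleft\xi$. I would prove this by well-founded induction on $\xi$, primarily on $\deg(\xi)$ with respect to the well order $<_\Gamma$ (compatibility with addition gives $\deg\xi_L,\deg\xi_R<_\Gamma\deg\xi$ whenever $\deg\xi_R\neq0$) and secondarily on $\vartriangleleft$. It is convenient to run the induction over all Lyndon words $u\in\L$ through the elements $\pi([u])$, rewriting $\pi([u])$ for $I$-reducible $u$ into standard monomials with letters $\vartriangleleft u$ via Lemma \ref{rearragement} beforehand. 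For $\xi\in X$ the hypothesis $\Delta(z_\xi)\in z_\xi\otimes1+1\otimes z_\xi+H_+\otimes H_+$ together with a bidegree count already pins down the span (when $\deg\xi$ is minimal the middle is forced to vanish). For $\xi\notin X$ with Shirshov factorization $\sh(\xi)=(\xi_L,\xi_R)$ I would use $z_\xi=z_{\xi_L}z_{\xi_R}-z_{\xi_R}z_{\xi_L}$ and the fact that $\Delta$ is an algebra homomorphism to write $\Delta(z_\xi)=[\Delta(z_{\xi_L}),\Delta(z_{\xi_R})]$ as a commutator in $H\otimes H$, to which the induction hypothesis applies on both halves.

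Expanding this commutator, the diagonal contributions reassemble into $z_\xi\otimes1+1\otimes z_\xi$, and, decisively, the cross terms pairing a top term of one half with a top term of the other cancel identically, precisely because $\Delta(z_\xi)$ comes from a commutator rather than a plain product (for instance $[z_{\xi_L}\otimes1,\,1\otimes z_{\xi_R}]=0$). What survives is assembled from the middle components $m_{\xi_L},m_{\xi_R}$ supplied by the induction hypothesis, whose tensor factors already carry letters $\vartriangleleft\xi_L$ and $\vartriangleleft\xi_R$, bracketed against $z_{\xi_L}$ or $z_{\xi_R}$. The delicate point is that a surviving factor such as $[z_{\xi_L},z_p]$ with $p\vartriangleleft\xi_R$ contains the a priori large letter $\xi_L$ (note $\xi_L\vartriangleright\xi$, as $\xi_L$ is a prefix of the Lyndon word $\xi$); nonetheless it must land in $H[\xi,\vartriangleleft,h]_+$. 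Here I would invoke Lemma \ref{rearragement} and a leading-word analysis showing that such a bracket reduces to standard monomials whose Lyndon letters are longer words beginning like $\xi$ but branching downward, hence all $\vartriangleleft\xi$. Controlling every surviving summand this way, including the $[m_{\xi_L},m_{\xi_R}]$ term, is the technical crux; throughout, the bidegree constraint (factors of degrees $\alpha,\beta$ with $\alpha+\beta=\deg\xi$ and $\alpha,\beta\neq0$) keeps all factors in strictly smaller degree and drives the induction.

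Finally, Part (5) can be read off from Part (2). Applying the power formula established inside the proof of Lemma \ref{comultiplicatoin-expression} to $z_\xi^{h(\xi)}$ yields $\Delta(z_\xi^{h(\xi)})\equiv\sum_i\binom{h(\xi)}{i}\,z_\xi^i\otimes z_\xi^{h(\xi)-i}$ modulo terms involving strictly smaller powers of $z_\xi$; since $z_\xi^{h(\xi)}\in H[\xi,\vartriangleleft,h]$ reduces to monomials not involving the letter $\xi$, each interior basis tensor $z_\xi^i\otimes z_\xi^{h(\xi)-i}$ with $0<i<h(\xi)$ must have vanishing coefficient, that is $\binom{h(\xi)}{i}=0$ in $k$ for all such $i$. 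In characteristic $0$ this is impossible for finite $h(\xi)$, while in characteristic $p$ it forces $h(\xi)$ to be a power of $p$, which is exactly (5) and is consistent with the cited statement in \cite{Zhou}.
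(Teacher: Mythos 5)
Your plan rests on a claim that is false: Parts (1) and (5) are \emph{not} ``controlled purely by the multiplicative Gr\"obner--Shirshov structure'' of $H=k\langle X\rangle/I$; they genuinely require the map $\Delta$. Concretely, take $H=k\langle x,y\rangle/(xy)$ with $\deg x=\deg y=1$ and $x<y$. Here $\mathcal{N}_I$ consists of the Lyndon words avoiding the factor $xy$, so $x,y,yx\in\mathcal{N}_I$; one has $h_I(yx)=2$ because $(yx)^2=yxyx$ contains $xy$; and the sorted sequence $(x,yx)\in\mathcal{P}(\Xi,\vartriangleleft,h)$ gives $z_xz_{yx}=\pi\big(x(yx-xy)\big)=\pi(xyx)-\pi(xxy)=0$, so the family $\{z_V\}_{V\in\mathcal{P}(\Xi,\vartriangleleft,h)}$ is not linearly independent, and $h_I(yx)=2$ is finite although ${\rm char}\,k=0$. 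Thus (1) and (5) fail for this perfectly good connected graded algebra; the point is precisely that the existence of $\Delta$ is what forces them, so no coproduct-free PBW theorem can deliver them, and any theorem you cite for them (such as \cite[Theorem 4.3]{Zhou}) must be using a Hopf-type hypothesis. This also exposes a hypothesis mismatch: in Proposition \ref{PBW-generators} the map $\Delta$ is only a graded algebra map, with no coassociativity or counit (the paper stresses that even the lift $\tilde{\Delta}$ need not be coassociative), so a theorem about connected graded Hopf algebras does not apply as stated. The paper's own proof avoids all of this by lifting $\Delta$ to a \emph{bounded comultiplication} $\tilde{\Delta}$ on $k\langle X\rangle$ --- possible exactly because the chosen order on $X$ refines the degree order, so the middle part of $\Delta(x)$ involves only generators $<x$ --- and then invoking \cite[Propositions 3.3 and 3.4]{Zhou}, which are formulated for this weak setting and prove Parts (1)--(5) \emph{simultaneously}. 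The interdependence is unavoidable, as your own closing step concedes: your derivation of (5) needs (2) \emph{and} the linear independence in (1).

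The induction you sketch for Part (2) also has a genuine gap at exactly the step you call the crux. For a surviving term $[z_{\xi_L},z_p]\otimes(\cdots)$ with $p\vartriangleleft\xi_R$, the tools you name --- Part (4) and Lemma \ref{rearragement} --- yield only $[z_{\xi_L},z_p]\in H[\xi_L,\vartriangleleft,h]$, i.e.\ a span of standard monomials with letters $\vartriangleleft\xi_L$. But $\xi=\xi_L\xi_R\vartriangleleft\xi_L$ by the prefix rule, so $H[\xi,\vartriangleleft,h]$ is strictly smaller than $H[\xi_L,\vartriangleleft,h]$, and nothing in your argument excludes letters lying in the lexicographic interval between $\xi$ and $\xi_L$ (for instance irreducible Lyndon words of the form $\xi_Ls$ with $s$ branching above $\xi_R$); ruling those out is the real content of the refined super-letter lemmas of \cite{Kh,ZSL3,Zhou}, not a routine leading-word reduction. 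Moreover, your induction hypothesis is statement (2) for the single letters $\xi_L,\xi_R$, which is too weak to control brackets of $z_{\xi_L}$ against arbitrary monomials of $H[\xi_R,\vartriangleleft,h]_+$ (as occur in $[z_{\xi_L}\otimes 1,m_R]$ and $[m_L,m_R]$); a correct argument has to carry a stronger multiplicative statement through the induction, which is again why the cited propositions treat (1)--(5) together. Parts (6), (3), (4) and the skeleton of your commutator expansion are fine, but the two pillars of the proposal --- outsourcing (1)/(5) to a purely multiplicative theorem, and the key containment in (2) --- do not stand as written.
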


\begin{proof}
	Let $\tilde{\Delta}:k\langle X\rangle \to k\langle X\rangle\otimes k\langle X\rangle$ be a homomorphism of $\Gamma$-graded algebras that lifts $\Delta$ through $\pi:k\langle  X\rangle \to H $. So the following diagram commutes 	
	\begin{eqnarray*}
		\xymatrix{
			k\langle X\rangle \ar[r]^-{\tilde{\Delta}} \ar[d]_\pi & k\langle X\rangle\otimes k\langle X\rangle \ar[d]^{\pi\otimes \pi} \\
			H\ar[r]^-{\Delta} &  H\otimes  H.
		}
	\end{eqnarray*}
	It follows that $\tilde{\Delta}(I) \subseteq k\langle X\rangle \otimes I + I\otimes k\langle X\rangle$. Generally, $\tilde{\Delta}(x) \neq 1\otimes x+x\otimes 1$; and moreover $\tilde{\Delta}$ is not necessarily coassociative in the sense that $(\tilde{\Delta} \otimes \id ) \circ \tilde{\Delta} = (\id \otimes \tilde{\Delta}) \circ \tilde{\Delta}.$ However, due to the assumption on $\Delta$ and the peculiar choice of the well order $<$ on $X$, we may require that 
	\[
	\tilde{\Delta}(x) \in 1\otimes x+x\otimes 1 +k\langle X_{<x}\rangle_+ \otimes k\langle X_{<x}\rangle_+, 
	\]
	where $X_{<x}:=\{~ x'\in X~|~ x'<x ~\}$. That is to say $\tilde{\Delta}$ is a bounded comultiplication on $k\langle X\rangle$ in the sense of \cite[Definition 3.1]{Zhou}.	Therefore, one may apply \cite[Proposition 3.3]{Zhou} and \cite[Proposition 3.4]{Zhou}  in the special case that the concerned braiding on $kX$  is simply the flipping map of $kX\otimes kX$ (see loc. cit. for undefined notions). Combine these two propositions as well as the above commuting square, Parts (1)-(5) follow readily. The final part is an immediate consequence of Lemma \ref{lex-restriction}.
\end{proof}

\begin{proof}[Proof of Theorem \ref{PBW-basis}]
For a connected $\Gamma$-graded Hopf algebra $H$, it is easy to check that $\Delta_H(z) \in z\otimes 1+1\otimes z +H_+\otimes H_+$ for any homogeneous element $z$ of nonzero degree. The result then follows immediately by the above proposition.
\end{proof}

\begin{remark}
Let $H$ be a cocommutative connected $\Gamma$-graded Hopf algebra $H$. Assume the base field $k$ is of characteristic $0$.  Fix a homogeneous generating set $X$ that consists of primitive elements. Then the family $\{z_\xi\}_{\xi\in \Xi}$ given in Proposition \ref{PBW-generators} is a homogeneous basis of the primitive space $P(H)$ and the map $h:\Xi\to \mathbb{N}_\infty$ given there  takes the constant value $\infty$. However, the total order  given in Proposition \ref{PBW-generators} is generally different from the one given in Example \ref{cocommutative}. Indeed, take two Lyndon words $\mu, \nu \in \Xi$ such that $\nu$ is a proper suffix of $\mu$, namely $\mu=w\nu$ for some nonempty word $w$. Then $\nu<_{\lex} \mu$, so $\nu \vartriangleleft \mu$ in the sense of Proposition \ref{PBW-generators}. But $\deg(z_\nu)<\deg(z_\mu)$, so $\mu \vartriangleleft \nu$ in the sense of Example \ref{cocommutative}. 
\end{remark}

\section{Applications to the Hopf algebra of permutations}

In this section, we consider the Hopf algebra $\mathfrak{S}{\rm Sym}$ of permutations  by examining the ideas of the previous sections.  We refer to \cite{AgSo, GrRe, MaRe, PoRe} as  basic references. Throughout, we assume the base field $k$ is of characteristic $0$ and write $H:=\mathfrak{S}{\rm Sym}$ for simplicity.
	
Let $\mathfrak{S}=\bigcup_{m\geq 0} \mathfrak{S}_m$ be the set of all permutations of all $m\in \mathbb{N}$. We use  one-line notation for permutations, and regard them as words (i.e. finite sequences) on  $\mathbb{Z}_+=\{1<2<3<\cdots\}$. The unique element of $\mathfrak{S}_0$ is denoted by $\theta$.
Let $\prec$ be the restriction of the pseudo-lexicographic order on $\langle \mathbb{Z}_+\rangle$ to $\mathfrak{S}$.  For example, 
	$
	123 \prec 12 \prec 132 \prec 1 \prec 213 \prec 21 \prec 231 \prec 312 \prec 321.
	$
	
For any permutations $\sigma = i_1\cdots i_m \in \mathfrak{S}_m$ and $\tau=j_1\cdots j_n \in \mathfrak{S}_n$, define $\sigma\times \tau\in \mathfrak{S}_{m+n}$ by $$\sigma\times \tau := i_1\cdots i_m (j_1+m)\cdots (j_n+m).$$
For example $12\times 231 = 12453$. By convention, $\theta\times \sigma =\sigma\times \theta =\sigma$ for any $\sigma \in \mathfrak{S}$.

	\begin{definition}
	Let $m\geq 1$. A permutation $\sigma \in \mathfrak{S}_m$ is called 
	\begin{enumerate}
		\item \emph{connected} if $\sigma$ doesn't preserve $\{1,\ldots, i\}$ for any  $i\in \{1,\ldots, m-1\}$;
		\item \emph{Lyndon}	if $\sigma_2\times \sigma_1 \prec \sigma $ for any factorization $\sigma =\sigma_1\times \sigma_2$ with $\sigma_1,\sigma_2\neq \theta$. 
	\end{enumerate}
     Let $\mathfrak{C}$ (resp. $\mathfrak{L}$) be the set of all connected (resp. Lyndon) permutations of all $m\ge 1$.
	\end{definition}
Note that connected permutations are Lyndon by definition. In the following table, we list all connected and Lyndon permutations  $\sigma \in \mathfrak{S}_{m}$ for $m\leq 3$.
\[
\begin{tabular}{|c|c|c|c|c|}
	\hline
	$\sigma\in \mathfrak{S}_{m}$ & connected & Lyndon {\textbackslash  connceted}  & others  \\ \hline
	m=1 & 1 & $\emptyset$  & $\emptyset$ \\ \hline
	m=2 & 21  & $\emptyset$ &  12 \\ \hline
	m=3 & 231, 312, 321 & 213 &  123, 132 \\ \hline 
\end{tabular}
\]  
\vskip 2mm

Let $\langle\mathfrak{C}\rangle$ be the set of all words (i.e. finite sequences) on $\mathfrak{C}$.  There is a canonical bijection $$\psi: \langle\mathfrak{C}\rangle \to \mathfrak{S}, \quad (\sigma_1,\ldots, \sigma_r) \mapsto \sigma_1\times \cdots \times \sigma_r.$$ Let $\varphi:\mathfrak{S} \to \langle\mathfrak{C}\rangle$ be the inverse of $\psi$. Consider $\langle\mathfrak{C}\rangle$ as a totally ordered set by equipping it with the pseudo-lexicographic order  associated to the restriction of the order $\prec$ on $\mathfrak{S}$ to $\mathfrak{C}$.

\begin{lemma}\label{Permutation-word}
 The map $\varphi: \mathfrak{S} \to \langle\mathfrak{C}\rangle$ is an isomorphism of totally ordered sets.
\end{lemma}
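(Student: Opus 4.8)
The plan is to prove the stronger fact that for all $\sigma, \tau \in \mathfrak{S}$ one has $\sigma \prec \tau$ if and only if $\varphi(\sigma) <_{\lex} \varphi(\tau)$. Since $\varphi$ is already known to be the inverse of the bijection $\psi$, this equivalence immediately upgrades $\varphi$ to an isomorphism of totally ordered sets. I would argue by induction on $|\sigma| + |\tau|$, reducing the comparison of two permutations to the comparison of their first connected components $\sigma_1$ and $\tau_1$, where $\sigma = \sigma_1 \times \sigma'$ and $\tau = \tau_1 \times \tau'$ are the connected decompositions.

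First I would record two elementary facts about the pseudo-lexicographic order. \emph{Shift-invariance}: the map adding a fixed constant $m$ to every letter is strictly increasing on $\mathbb{Z}_+$, hence induces an order-preserving injection on words; consequently comparing $\mathrm{shift}_m(u)$ with $\mathrm{shift}_m(v)$ agrees with comparing $u$ with $v$. \emph{Prefix reduction}: for any common prefix $\rho$ one has $\rho u \prec \rho v$ iff $u \prec v$, with the convention that the empty tail is $\prec$-largest, matching the prefix clause of the pseudo-lexicographic order. Both follow routinely from the definition.

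The induction then splits according to $\sigma_1$ and $\tau_1$. The base cases $\sigma = \theta$ or $\tau = \theta$ are settled by the prefix clause, since the empty word and the empty permutation are $\prec$-largest on both sides. When $\sigma_1 = \tau_1 =: \rho$, I would use that, as a word, $\rho \times \sigma'$ literally begins with $\rho$ followed by $\mathrm{shift}_{|\rho|}(\sigma')$; prefix reduction reduces the comparison of $\sigma, \tau$ to that of $\mathrm{shift}_{|\rho|}(\sigma')$ and $\mathrm{shift}_{|\rho|}(\tau')$, and shift-invariance reduces it further to $\sigma' \prec \tau'$, which by the induction hypothesis matches the tail comparison of $\varphi(\sigma)$ and $\varphi(\tau)$ (which share the first letter $\rho$). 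When $\sigma_1 \neq \tau_1$, the comparison on $\langle\mathfrak{C}\rangle$ is decided directly by $\sigma_1$ versus $\tau_1$ via the second clause of the pseudo-lexicographic order, so it remains to show that the word comparison of $\sigma$ and $\tau$ is decided the same way.

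This last point is the crux, and the main obstacle; it is exactly where connectivity enters. Since the first $|\sigma_1|$ letters of $\sigma$ are the word $\sigma_1$ (and similarly for $\tau$), I would show that two distinct connected permutations $\sigma_1, \tau_1$ must already differ within the first $\min(|\sigma_1|, |\tau_1|)$ positions: if they agreed on that entire initial segment then, say for $|\sigma_1| < |\tau_1|$, the first $|\sigma_1|$ letters of $\tau_1$ would be a permutation of $\{1, \dots, |\sigma_1|\}$, forcing $\tau_1$ to preserve $\{1, \dots, |\sigma_1|\}$ and contradicting connectivity (the case $|\sigma_1| = |\tau_1|$ would force $\sigma_1 = \tau_1$). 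Hence the first position $p$ where $\sigma$ and $\tau$ differ lies inside both first components and carries the letters $\sigma_1(p), \tau_1(p)$, so $\sigma \prec \tau \iff \sigma_1 \prec \tau_1$ by the second clause of the pseudo-lexicographic order. This matches the $\langle\mathfrak{C}\rangle$-comparison and closes the induction.
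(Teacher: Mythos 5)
Your proof is correct and follows essentially the same route as the paper: your crux claim---that two distinct connected permutations must already differ within the first $\min(|\sigma_1|,|\tau_1|)$ positions, forced by connectivity---is precisely the paper's key observation that no connected permutation can contain another as a proper prefix. The inductive bookkeeping via shift-invariance and prefix reduction that you supply is exactly what the paper compresses into ``the result then follows by a straightforward argument.''
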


\begin{proof}
Note that for any two connected permutations $\sigma, \tau$, if $\sigma \prec \tau$ then it is necessary that $\sigma$ doesn't contains $\tau$ as a proper prefix. The result then follows by a straightforward argument.
\end{proof}

\begin{lemma}\label{Lyndon-permutation}
A permutation $\sigma\in \mathfrak{S}$ is Lyndon if and only if $\varphi(\sigma)$ is Lyndon as a word on $\mathfrak{C}$.
\end{lemma}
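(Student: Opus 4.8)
The plan is to transport the definition of a Lyndon permutation directly to the word side through the bijection $\varphi$, relying on two structural facts: that $\varphi$ converts the external product $\times$ into concatenation of words on $\mathfrak{C}$, and that it is an order isomorphism (Lemma \ref{Permutation-word}).

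First I would record that $\psi$ is a homomorphism of monoids from $(\langle\mathfrak{C}\rangle, \text{concatenation})$ to $(\mathfrak{S}, \times)$: since $\times$ is associative and $\psi(\sigma_1,\ldots,\sigma_r)=\sigma_1\times\cdots\times\sigma_r$, one has $\psi(UV)=\psi(U)\times\psi(V)$ for all $U,V\in\langle\mathfrak{C}\rangle$. Passing to the inverse, $\varphi(\alpha\times\beta)=\varphi(\alpha)\varphi(\beta)$ for all $\alpha,\beta\in\mathfrak{S}$. In particular, the factorizations $\sigma=\sigma_1\times\sigma_2$ with $\sigma_1,\sigma_2\neq\theta$ are in bijection with the factorizations $\varphi(\sigma)=VW$ into two nonempty words, via $V=\varphi(\sigma_1)$ and $W=\varphi(\sigma_2)$ (equivalently $\sigma_1=\psi(V)$, $\sigma_2=\psi(W)$), since splitting the connected decomposition of $\sigma$ at any position corresponds to an external factorization and conversely.

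Next, for such a factorization I would compute $\sigma_2\times\sigma_1=\psi(W)\times\psi(V)=\psi(WV)$ and $\sigma=\psi(VW)$. Because $\psi$ is an order isomorphism (the inverse of the order isomorphism $\varphi$ of Lemma \ref{Permutation-word}), the inequality $\sigma_2\times\sigma_1\prec\sigma$ holds if and only if $WV<_{\lex}VW$. Since $\varphi(\sigma)=VW$, the latter is exactly the rotation inequality $WV<_{\lex}\varphi(\sigma)$ that the definition of a Lyndon word demands for the factorization $\varphi(\sigma)=VW$ into $v=V$, $w=W$.

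Finally I would assemble these equivalences: $\sigma$ is Lyndon if and only if $\sigma_2\times\sigma_1\prec\sigma$ for every factorization $\sigma=\sigma_1\times\sigma_2$ with $\sigma_1,\sigma_2\neq\theta$, if and only if $WV<_{\lex}VW$ for every factorization $\varphi(\sigma)=VW$ into nonempty words, if and only if $\varphi(\sigma)$ is Lyndon. The degenerate cases match automatically, as $\varphi(\sigma)$ is nonempty precisely when $\sigma\neq\theta$, which is the standing hypothesis in both notions of Lyndon. I expect the only point requiring genuine care to be the verification that $\varphi$ intertwines $\times$ with concatenation --- that is, that the connected decomposition of $\sigma_1\times\sigma_2$ is the concatenation of those of $\sigma_1$ and $\sigma_2$ --- which rests on the uniqueness of the connected factorization already packaged in the bijectivity of $\psi$; everything else is a direct translation through the order isomorphism.
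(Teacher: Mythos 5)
Your proof is correct and takes essentially the same route as the paper, whose entire proof is the sentence ``It is a direct consequence of Lemma \ref{Permutation-word}''; your write-up simply makes that consequence explicit by noting that $\psi$ is a monoid isomorphism (so factorizations $\sigma=\sigma_1\times\sigma_2$ correspond bijectively to factorizations $\varphi(\sigma)=VW$ into nonempty words) and an order isomorphism (so $\sigma_2\times\sigma_1\prec\sigma$ holds iff $WV<_{\lex}\varphi(\sigma)$), which is exactly how the two definitions of Lyndon transport into each other.
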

 
\begin{proof}
It is a direct consequence of Lemma \ref{Permutation-word}. 
\end{proof}

For each Lyndon permutation $\sigma \in \mathfrak{L}$, define $\mathcal{T}_\sigma \in H$ inductively as follows:
\begin{eqnarray*}
	\mathcal{T}_\sigma \,\, := \,\, \left\{
	\begin{array}{ll}
		~	\mathcal{F}_\sigma, & \text{if } \sigma  \text { is connected} \\
		~  [\mathcal{T}_{\sigma_L}, \mathcal{T}_{\sigma_R}], &  \text{if } \sigma \text { is not connected},
	\end{array}\right.
\end{eqnarray*}
where $\sigma_L:= \psi(\varphi(\sigma)_L)$ and  $\sigma_R:= \psi(\varphi(\sigma)_R)$.  Let $h: \mathfrak{L} \to \mathbb{N}_\infty$ be the map with constant value $\infty$. The restriction of the order $\prec$ on $\mathfrak{C}$  to $\mathfrak{L}$ is denoted by the same symbol $\prec$. 

\begin{theorem}\label{PBW-permutation}
Consider the triple $(\{\mathcal{T}_\sigma\}_{\sigma\in \mathfrak{L}}, \prec, h)$ given as above, one has
\begin{enumerate}
	\item $\{\mathcal{T}_V\}_{V\in \mathcal{P}(\mathfrak{L},\prec,h)}$ is a basis of $H$;
	\item $\Delta(\mathcal{T}_\sigma) \in 1\otimes \mathcal{T}_\sigma + \mathcal{T}_\sigma\otimes 1 +H[\sigma, \prec,h]_+\otimes H[\sigma, \prec,h]_+$ for each $\sigma\in \mathfrak{L}$;
	\item $\mathcal{T}_\nu \mathcal{T}_\mu \in \mathcal{T}_\mu \mathcal{T}_\nu + H[\nu, \prec,h]$ for each pair $\mu, \nu \in \mathfrak{L}$ with $\mu \prec \nu$.
\end{enumerate}
\end{theorem}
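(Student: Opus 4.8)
The plan is to realize Theorem \ref{PBW-permutation} as the instance of the general construction of Section \ref{proof-PBW} (concretely, Proposition \ref{PBW-generators}) obtained by taking the generating set $X:=\mathfrak{C}$ of connected permutations, regarded as homogeneous elements $\mathcal{F}_\sigma\in H$ of degree $m$ for $\sigma\in\mathfrak{S}_m$, totally ordered by the restriction of $\prec$. Under the bijection $\psi\colon\langle\mathfrak{C}\rangle\to\mathfrak{S}$ a word $(\sigma_1,\ldots,\sigma_r)$ is sent to $\sigma_1\times\cdots\times\sigma_r$, and the canonical surjection $\pi\colon k\langle\mathfrak{C}\rangle\to H$ sends it to the product $\mathcal{F}_{\sigma_1}\cdots\mathcal{F}_{\sigma_r}$. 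The comultiplication $\Delta_H$ satisfies the hypothesis $\Delta_H(z)\in z\otimes 1+1\otimes z+H_+\otimes H_+$ of Proposition \ref{PBW-generators}, since $H$ is connected graded. Granting this identification, conclusions (1), (2) and (4) of Proposition \ref{PBW-generators} are precisely the three assertions of the theorem, once we verify that (i) the ideal $I=\ker\pi$ vanishes and (ii) the resulting combinatorial data matches the triple $(\{\mathcal{T}_\sigma\}_{\sigma\in\mathfrak{L}},\prec,h)$.

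The heart of the matter, and the step I expect to be the main obstacle, is (i): that $H$ is free as an associative algebra on $\{\mathcal{F}_\sigma\}_{\sigma\in\mathfrak{C}}$, equivalently $I=0$. I would prove this by a triangularity argument based on the product rule for the fundamental basis, in which $\mathcal{F}_\sigma\mathcal{F}_\tau$ is the sum over the shifted shuffle of $\sigma$ with $\tau$ of the corresponding $\mathcal{F}_\zeta$. In the concatenation $\sigma\times\tau$ every letter of $\sigma$ (each smaller than every letter of the shifted copy of $\tau$) precedes those of $\tau$; hence, all shuffles having the same length, at the first position where any other shuffle differs from it the concatenation carries the smaller letter, so $\sigma\times\tau$ is the unique $\prec$-minimal term of the product and occurs with coefficient one. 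Iterating, $\mathcal{F}_{\sigma_1}\cdots\mathcal{F}_{\sigma_r}\in\mathcal{F}_{\psi(\sigma_1,\ldots,\sigma_r)}+\Span\{\mathcal{F}_\tau\mid\psi(\sigma_1,\ldots,\sigma_r)\prec\tau\}$. Because $\psi$ is a bijection, the leading permutations are pairwise distinct and run over all of $\mathfrak{S}$; therefore the monomials $\mathcal{F}_{\sigma_1}\cdots\mathcal{F}_{\sigma_r}$, for $(\sigma_1,\ldots,\sigma_r)\in\langle\mathfrak{C}\rangle$, are linearly independent and span $H$, and as there are exactly $m!=\dim H_m$ of them of degree $m$ they form a basis. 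Thus $\pi$ is an isomorphism and $I=0$. Alternatively, one may invoke the known freeness of $\mathfrak{S}{\rm Sym}$ on the connected permutations, cf. \cite{PoRe,AgSo}.

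With $I=0$ in hand, step (ii) is bookkeeping. Every Lyndon word on $\mathfrak{C}$ is then $I$-irreducible, so $\N_I=\L$ and $h_I\equiv\infty$; by Lemma \ref{Lyndon-permutation} the order isomorphism $\varphi$ of Lemma \ref{Permutation-word} restricts to a bijection $\mathfrak{L}\to\L=\N_I$ identifying $(\mathfrak{L},\prec)$ with $(\N_I,<_{\lex})$. A short induction on the length of $\varphi(\sigma)$ then shows $\mathcal{T}_\sigma=\pi([\varphi(\sigma)])=z_{\varphi(\sigma)}$: the base case $\sigma\in\mathfrak{C}$ gives $\mathcal{F}_\sigma$, while in the inductive case the Shirshov factorization $\sh(\varphi(\sigma))=(\varphi(\sigma)_L,\varphi(\sigma)_R)$ turns the bracket $[\varphi(\sigma)]$ into the commutator $[\mathcal{T}_{\sigma_L},\mathcal{T}_{\sigma_R}]$ with $\sigma_L=\psi(\varphi(\sigma)_L)$ and $\sigma_R=\psi(\varphi(\sigma)_R)$, which is exactly the defining recursion for $\mathcal{T}_\sigma$. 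Hence the families $\{\mathcal{T}_V\}_{V\in\mathcal{P}(\mathfrak{L},\prec,h)}$ and $\{z_V\}_{V\in\mathcal{P}(\N_I,<_{\lex},h_I)}$ coincide under $\varphi$.

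Finally, I would observe that the only order-theoretic input the arguments of Section \ref{proof-PBW} require is that the restriction of the order to each fixed degree be a well order (this is what feeds Lemma \ref{lex-restriction} and the well-foundedness of $\vartriangleleft_L,\vartriangleleft_R$). Here this holds trivially because $\mathfrak{C}\cap\mathfrak{S}_m$, hence $\mathfrak{L}\cap\mathfrak{S}_m$, is finite for every $m$, so those arguments go through with $\prec$ in place of the degree-first order built in Section \ref{proof-PBW}; this is the role of Lemmas \ref{Permutation-word} and \ref{Lyndon-permutation}. Applying Proposition \ref{PBW-generators} with $\Delta=\Delta_H$ then yields its conclusions (1), (2) and (4), which under the dictionary of step (ii) are precisely parts (1), (2) and (3) of Theorem \ref{PBW-permutation}.
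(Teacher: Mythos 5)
Your overall route is the same as the paper's: show $H$ is free on the connected permutations (the paper simply cites \cite[Theorem 2.1]{PoRe}, while your triangularity argument via the $\prec$-minimal term $\sigma\times\tau$ of the shifted shuffle is a correct self-contained substitute), use $I=0$ to get $\mathcal{N}_I=\L$ and $h_I\equiv\infty$, identify $(\mathfrak{L},\prec)$ with $(\Xi,\vartriangleleft)$ via $\varphi$ and $\mathcal{T}_\sigma$ with $z_{\varphi(\sigma)}$, and then invoke Proposition \ref{PBW-generators}. Steps (i) and (ii) of your proposal are fine.

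The gap is in your last paragraph. It is not true that ``the only order-theoretic input the arguments of Section \ref{proof-PBW} require is that the restriction of the order to each fixed degree be a well order.'' Proposition \ref{PBW-generators} is stated and proved for the order $<$ on $X$ built at the start of Section \ref{proof-PBW}, which is degree-first by construction, and its proof uses this in an essential way: one needs the lift $\tilde{\Delta}$ to be a bounded comultiplication, i.e.
\[
\tilde{\Delta}(x) \in 1\otimes x+x\otimes 1 +k\langle X_{<x}\rangle_+ \otimes k\langle X_{<x}\rangle_+,
\]
and the only reason this holds is that every generator of degree smaller than $\deg(x)$ precedes $x$. That is precisely the property the paper's own proof supplies at this point (its claim that $\sigma\in\mathfrak{C}_m$, $\tau\in\mathfrak{C}_n$, $m<n$ imply $\sigma\prec\tau$), and precisely the property your argument replaces by an incorrect assertion. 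The order $\prec$ on $\mathfrak{C}$ is \emph{not} degree-first: $321\in\mathfrak{C}_3$ and $3142\in\mathfrak{C}_4$ are both connected, yet $3142\prec 321$ (they first differ at the second letter, and $1<2$). This is not a removable formality. Indeed, $\Delta_H(\mathcal{F}_{3142})=1\otimes \mathcal{F}_{3142}+\mathcal{F}_{3142}\otimes 1+\mathcal{F}_{1}\otimes \mathcal{F}_{132}+\mathcal{F}_{21}\otimes \mathcal{F}_{21}+\mathcal{F}_{213}\otimes \mathcal{F}_{1}$, and since $H$ is free on $\{\mathcal{F}_\sigma\}_{\sigma\in\mathfrak{C}}$ the expressions
\[
\mathcal{F}_{132}=\mathcal{F}_{1}\mathcal{F}_{21}-\mathcal{F}_{312}-\mathcal{F}_{321},
\qquad
\mathcal{F}_{213}=\mathcal{F}_{21}\mathcal{F}_{1}-\mathcal{F}_{231}-\mathcal{F}_{321}
\]
are the unique polynomial expressions of these components in the generators; both involve the generator $\mathcal{F}_{321}$, which is not $\prec$-below $\mathcal{F}_{3142}$. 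So the bounded-comultiplication hypothesis fails for $\prec$ at $x=\mathcal{F}_{3142}$, and Proposition \ref{PBW-generators} cannot be applied with this order as you propose.

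Worse, the same computation shows the step cannot be patched within this approach: rewriting in the PBW basis gives $\mathcal{F}_{132}=\mathcal{T}_{1}\mathcal{T}_{21}-\mathcal{T}_{312}-\mathcal{T}_{321}$, so the expansion of $\Delta_H(\mathcal{T}_{3142})-1\otimes\mathcal{T}_{3142}-\mathcal{T}_{3142}\otimes 1$ contains the basis tensor $\mathcal{T}_{1}\otimes\mathcal{T}_{321}$ with coefficient $-1$, while $321\not\prec 3142$ means $\mathcal{T}_{321}\notin H[3142,\prec,h]$; that is, assertion (2) of Theorem \ref{PBW-permutation} itself fails for $\sigma=3142$ under the order $\prec$ as defined. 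In other words, your proof breaks exactly at the step where the paper's proof inserts its degree-compatibility claim, and the example above shows that claim (hence the justification on either route) is genuinely problematic rather than a detail one may wave through; a correct treatment would have to replace $\prec$ by a degree-first order on $\mathfrak{C}$ (which changes which permutations are Lyndon and hence the statement) or supply a new argument for part (2).
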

\begin{proof}
By \cite[Theorem 2.1]{PoRe}, $H$ is freely generated by $X=\{\mathcal{F}_\sigma\}_{\sigma\in \mathfrak{C}}$. For the brevity of notation, we identify the word on $X$ with the word on $\mathfrak{C}$ in the natural way.  Note that each $\mathfrak{C}_m := \mathfrak{C} \cap \mathfrak{G}_m$ is a finite set. Moreover,  if $\sigma \in \mathfrak{C}_m$ and $\tau\in \mathfrak{C}_n$ with $m<n$ then $\sigma \prec \tau$. So the arguments taken in Section \ref{proof-PBW} apply. Since all Lyndon words in $\langle \mathfrak{C}\rangle$ are irreducible, the set $\Xi$ given in Proposition \ref{PBW-generators} is exactly the set of all Lyndon words in $\langle \mathfrak{C}\rangle$. By Lemma \ref{Permutation-word} and Lemma \ref{Lyndon-permutation}, it is easy to see that the map $\varphi: \mathfrak{G} \to \langle \mathfrak{C}\rangle$ restricts to an isomorphism of totally ordered sets $(\mathfrak{L},\prec) \xrightarrow{\cong} (\Xi,\vartriangleleft)$.  Moreover, one has $\mathcal{T}_\sigma = z_{\varphi(\sigma)}$  for each Lyndon permutation $\sigma\in \mathfrak{L}$. The result follows by Proposition \ref{PBW-generators}.
\end{proof}
	
By Lemma \ref{Permutation-word} and \cite[Proposition 1.4 (L5)]{ZSL3}, each permutation  $\sigma$ admits a unique decomposition 
	\[
	\sigma = \sigma_1\times \cdots \times \sigma_r, \quad r\geq 0,  \quad \sigma_1 \preceq \cdots \preceq \sigma_r\in \mathfrak{L}.
	\]
We call it the \emph{Lyndon decomposition} of $\sigma$. Taking Lyndon decomposition then defines a bijection
$\Phi: \mathfrak{S} \to \mathcal{P}(\mathfrak{L}, \prec, h)$. For a permutation $\sigma\in \mathfrak{G}$ which is not Lyndon, let
\[
\mathcal{T}_\sigma := \mathcal{T}_{\Phi(\sigma)} = \mathcal{T}_{\sigma_1} \cdots \mathcal{T}_{\sigma_r},
\]
where $\Phi(\sigma) = (\sigma_1,\ldots, \sigma_r)$.
By convention, $\mathcal{T}_\theta=1$ for the unique element $\theta\in \mathfrak{S}_0$. By Theorem \ref{PBW-permutation} (1), one obtains a third $\mathfrak{S}$-indexed homogeneous basis of $H$, namely
\[
\{\mathcal{T}_\sigma\}_{\sigma\in \mathfrak{S}},
\]
other than the fundamental  basis $\{\mathcal{F}_\sigma\}_{\sigma\in \mathfrak{S}}$ and the monomial basis $\{\mathcal{M}_\sigma\}_{\sigma\in \mathfrak{S}}$.

There are two total orders $\prec_L$ and $\prec_R$ on $\mathcal{P}(\mathfrak{L}, \prec, h)$, as defined in Section \ref{main-results}. We transfer them to total orders on $\mathfrak{S}$ via $\Phi$, which we use the same notation correspondingly. Take examples, 
\begin{align*}
&123 \prec_L 132 \prec_L 213 \prec_L 231 \prec_L 312 \prec_L 321;\\
&123 \prec_R 213 \prec_R 132 \prec_R 231 \prec_R 312 \prec_R 321.
\end{align*}
Not that if $\sigma\in \mathfrak{S}_m$ and $\tau\in \mathfrak{S}_n$ with $m<n$ then $\sigma \prec_L \tau$ and $\sigma \prec_R \tau$. In addition, by \cite[Lemma 1.5]{ZSL3}  it is not hard to see that $\sigma \prec_L \tau$ if and only if $\sigma \prec \tau$  provided that $m=n$. 

Let  $l(\sigma):=l(\Phi(\sigma))$ for each $\sigma\in \mathfrak{S}$. It is the number of factors in the Lyndon decomposition of $\sigma$.

\begin{theorem}
For each integers $m,n$ with $m\geq 0$,  and each permutation $\sigma \in \mathfrak{S}_m$, one has 
\[
\Psi_n(\mathcal{T}_\sigma)\in n^{l(\sigma)} \cdot \mathcal{T}_\sigma + {\rm Span} \big(\{~ \mathcal{T}_\mu~|~ \mu  \in \mathfrak{S}_m, ~ \mu \prec_R \sigma~\}\big ).
\]
Consequently, the  restrictions of the Adams operators to  $H_m$ are simultaneously upper triangularizable with respect to the ordered basis $\{\mathcal{T}_\sigma\}_{\sigma\in \mathfrak{S}_m}$, where $\mathfrak{S}_m$ is ordered by $\prec_R$.
\end{theorem}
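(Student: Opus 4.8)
The plan is to obtain the statement as a direct transcription of Theorem \ref{Adams operators}, applied to the triple $(\{\mathcal{T}_\sigma\}_{\sigma\in\mathfrak{L}}, \prec, h)$ and transported to $\mathfrak{S}$ through the bijection $\Phi:\mathfrak{S}\xrightarrow{\cong}\mathcal{P}(\mathfrak{L},\prec,h)$ furnished by the Lyndon decomposition. The first task is to confirm that this triple meets all four hypotheses of Theorem \ref{Adams operators}. Hypotheses (1) and (3) are precisely parts (2) and (3) of Theorem \ref{PBW-permutation}, the scalar in (3) being $1\in k^\times$. Hypothesis (2) is vacuous: since $h$ is the constant map $\infty$, there is no $\sigma\in\mathfrak{L}$ with $h(\sigma)<\infty$. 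For hypothesis (4), note that the set $\{\sigma\in\mathfrak{L} : \deg(\mathcal{T}_\sigma)=m\}$ is contained in $\mathfrak{S}_m$ and hence finite, so the total order $\prec$ restricts to a well order on it.

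With the hypotheses in place, Theorem \ref{Adams operators} yields, for every $V\in\mathcal{P}(\mathfrak{L},\prec,h)$ and every integer $n$,
\[
\Psi_n(\mathcal{T}_V)\in n^{l(V)}\cdot\mathcal{T}_V + {\rm Span}\big(\{~\mathcal{T}_W ~|~ W\in\mathcal{P}(\mathfrak{L},\prec,h),\ W\prec_R V~\}\big).
\]
Next I would set $V=\Phi(\sigma)$ for $\sigma\in\mathfrak{S}_m$, so that $\mathcal{T}_V=\mathcal{T}_\sigma$ and $l(V)=l(\sigma)$ by the definitions adopted above, while $W\prec_R V$ translates through $\Phi$ into $\mu\prec_R\sigma$ for $\mu=\Phi^{-1}(W)$, by the very way $\prec_R$ was transferred from $\mathcal{P}(\mathfrak{L},\prec,h)$ to $\mathfrak{S}$. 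The one remaining point is the degree restriction $\mu\in\mathfrak{S}_m$: the order $\prec_R$ ranks sequences of smaller total degree below those of larger degree, so a priori the span contains terms $\mathcal{T}_\mu$ with $\deg(\mathcal{T}_\mu)<m$. But $\Psi_n$ is a graded endomorphism, being a convolution power of the graded map $\id_H$ (or, for $n<0$, of $S_H$, which is graded), so $\Psi_n(\mathcal{T}_\sigma)\in H_m$ and the lower-degree terms must vanish, leaving exactly the $\mu\in\mathfrak{S}_m$ with $\mu\prec_R\sigma$. This establishes the displayed formula.

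For the concluding assertion, I would recall from Theorem \ref{PBW-permutation}(1) that $\{\mathcal{T}_\sigma\}_{\sigma\in\mathfrak{S}_m}$ is a basis of $H_m$, and that $\prec_R$ is a total order on the finite set $\mathfrak{S}_m$. Reading the formula off against this ordered basis exhibits each matrix of $\Psi_n|_{H_m}$ as upper triangular with diagonal entries $n^{l(\sigma)}$; since the basis and the order are independent of $n$, the family $(\Psi_n|_{H_m})_n$ is simultaneously upper triangularizable. I expect essentially no serious obstacle here, as the substantive content has been front-loaded into Theorem \ref{PBW-permutation} and Theorem \ref{Adams operators}; the only genuine care needed is the bookkeeping of the bijection $\Phi$ together with the gradedness argument that discards the lower-degree terms permitted by $\prec_R$.
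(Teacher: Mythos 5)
Your proposal is correct and follows exactly the paper's own route: the paper proves this theorem as "a direct consequence of Theorem \ref{PBW-permutation} and Theorem \ref{Adams operators}," which is precisely what you carry out, including the verification of the hypotheses and the transport along $\Phi$. The only detail you make explicit that the paper leaves implicit is the gradedness argument discarding the lower-degree terms allowed by $\prec_R$, and that step is correct.
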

\begin{proof}
	This is a direct consequence of Theorem \ref{PBW-permutation} and Theorem \ref{Adams operators}.
\end{proof}

\begin{example}
It is a continuation of Example \ref{counterexample}. The matrix of $\Psi_2|_{H_3}$ with respect to the ordered basis $\mathcal{T}_{123}, \mathcal{T}_{213}, \mathcal{T}_{132}, \mathcal{T}_{231}, \mathcal{T}_{312}, \mathcal{T}_{321}$ of $H_3$ is
\[
\left[ 
\begin{matrix}
8 & 0 & 2 & 1 & 1 & 0 \\
0 & 2 & 1 & -1 & 1 & 1 \\
0 & 0 & 4 & 0 & 0 & 2 \\
0 & 0 & 0 & 2 & 0 & 0 \\
0 & 0 & 0 & 0 & 2 & 0 \\
0 & 0 & 0 & 0 & 0 & 2
\end{matrix}
\right ]
\]
Indeed, by definition one has $\mathcal{T}_1=\mathcal{F}_1$, $\mathcal{T}_{12}=\mathcal{T}_{1}\mathcal{T}_{1}=\mathcal{F}_{12}+\mathcal{F}_{21}$,  $\mathcal{T}_{21}=\mathcal{F}_{21}$ and
\begin{align*}
	&\mathcal{T}_{123} = \mathcal{T}_1\mathcal{T}_1\mathcal{T}_1  = \mathcal{F}_{123} + \mathcal{F}_{132} + \mathcal{F}_{213} + \mathcal{F}_{231} + \mathcal{F}_{312} + \mathcal{F}_{321},\\
	&\mathcal{T}_{213} = [\mathcal{T}_{21}, \mathcal{T}_1] = -\mathcal{F}_{132} + \mathcal{F}_{213} + \mathcal{F}_{231} -\mathcal{F}_{312}, \\
	&\mathcal{T}_{132} = \mathcal{T}_1\mathcal{T}_{21} = \mathcal{F}_{132} + \mathcal{F}_{312} +\mathcal{F}_{321}, \\
	&\mathcal{T}_{231} =  \mathcal{F}_{231}, \quad \mathcal{T}_{312} =  \mathcal{F}_{312}, \quad \mathcal{T}_{321} =  \mathcal{F}_{321}.
\end{align*}
The  desired matrix  follows by using the formulas that obtained in Example \ref{counterexample}. 
\end{example}

\section*{Acknowledgments}

This work is supported by Shanghai Key Laboratory of Pure Mathematics and Mathematical Practice (Grant Nos. 22DZ2229014), the NSFC (Grant Nos. 12371039), and the K. C. Wong Magna Fund in Ningbo University. The authors thank the referee for his/her careful reading and valuable suggestions. Actually, the final section is based on one of the suggestions of the referee.


\end{document}